\newtheorem{theorem}{Theorem}[section]
\newtheorem{assumptions}[theorem]{Assumptions}
\newtheorem{lemma}[theorem]{Lemma}
\newtheorem{thmconst}[theorem]{Theorem-Construction}
\newtheorem{proposition}[theorem]{Proposition}
\newtheorem{corollary}[theorem]{Corollary}
\theoremstyle{definition}
\newtheorem{definition}[theorem]{Definition}
\newtheorem{definition-lemma}[theorem]{Definition-Lemma}
\newtheorem{construction}[theorem]{Construction}
\newtheorem{example}[theorem]{Example}
\theoremstyle{remark}
\newtheorem{remark}[theorem]{Remark}
\numberwithin{equation}{section}
\numberwithin{figure}{section}
\newcommand{\ZZ} {\mathbb{Z}}
\newcommand{\RR} {\mathbb{R}}
\newcommand{\bR} {\RR}
\newcommand{\PP} {\mathbb{P}}
\newcommand{\GG} {\mathbb{G}}
\newcommand {\shO}  {\mathcal{O}}
\newcommand {\meij} {{\{E_{ij}\}}}
\newcommand {\Adm} {\operatorname{Adm}}
\newcommand {\mfij} {\{F_{ij}\}}
\newcommand {\Aut}  {\operatorname{Aut}}
\newcommand {\can}  {\mathrm{can}}
\newcommand {\Gm} {\GG_m}
\newcommand {\Hom}  {\operatorname{Hom}}
\newcommand {\id}  {\operatorname{id}}
\newcommand {\Int}  {\operatorname{Int}}
\newcommand {\Nef}  {\operatorname{{Nef}}}
\newcommand {\Pic}  {\operatorname{Pic}}
\newcommand {\NE}   {\operatorname{NE}}
\def\mapright#1{\smash{
  \mathop{\longrightarrow}\limits^{#1}}}
\def\oL{\overline{L}}
\def\bP{\mathbb P}
\def\bN{\mathbb N}
\def\Nef{\operatorname{Nef}}
\def\bF{\mathbb F}
\def\Aut{\operatorname{Aut}}
\def\tY{\tilde{Y}}
\def\cO{\Cal O}
\def\bZ{\mathbb Z}
\def\bC{\mathbb C}
\def\bG{\mathbb G}
\def\bR{\mathbb R}
\def\oY{\bar{Y}}
\def\oD{\bar{D}}
\def\op{\bar{p}}
\def\cD{\Cal D}
\def\cE{\Cal E}
\def\cF{\Cal F}
\def\tS{\tilde S}
\def\oS{\overline{S}}
\def\tS{\tilde{S}}
\def\oN{\overline{N}}
\def\cY{\Cal Y}
\def\tcY{\tilde{\cY}}
\def\ocY{\overline{\cY}}
\def\ocD{\overline{\Cal D}}
\def\cD{\Cal D}
\def\cX{\Cal X}
\def\cM{\Cal M}
\def\cE{\Cal E}
\def\cO{\Cal O}
\def\rk{\operatorname{rk}}
\def\Pic{\operatorname{Pic}}
\def\Hom{\operatorname{Hom}}
\def\Cal{\mathcal}
\def\Efi#1#2#3#4#5{\displaystyle
#1\!\!-\!\!#2
\!\!-\!\!#3
\!\!-\!\!#4
\hskip-24.2pt\lower4.5pt\hbox{${\scriptstyle|}
\hskip-3.35pt\lower6pt\hbox{$#5$}$}}
\def\Evia#1#2#3#4#5{\displaystyle
#1\!\!-\!\!#2
\!\!-\!\!#3
\hskip-24.2pt\lower4.5pt\hbox{${\scriptstyle|}
\hskip-3.35pt\lower6pt\hbox{$#4\!\!-\!\!\!-\!\!\!-\!\!$}$\hskip2.3pt${\scriptstyle|}
\hskip-3.35pt\lower6pt\hbox{$#5$}$}}
\def\Ezia#1#2#3#4{\displaystyle
#1\!\!-\!\!#2
\hskip-14.8pt\lower4.5pt\hbox{${\scriptstyle|}
\hskip-3.35pt\lower6pt\hbox{$#3\!\!-\!\!$}$\hskip2.3pt${\scriptstyle|}
\hskip-3.35pt\lower6pt\hbox{$#4$}$}}
\def\Efia#1#2#3#4#5#6{\displaystyle
#1\!\!-\!\!#2
\!\!-\!\!#3
\!\!-\!\!#4
\hskip-24.2pt\lower4.5pt\hbox{${\scriptstyle|}
\hskip-3.35pt\lower6pt\hbox{$#5$}$\hskip5.7pt${\scriptstyle|}
\hskip-3.35pt\lower6pt\hbox{$#6$}$}}
\def\Esi#1#2#3#4#5#6{\displaystyle
#1\!\!-\!\!#2
\!\!-\!\!#3
\!\!-\!\!#4\!\!-\!\!#5
\hskip-24.2pt\lower4.5pt\hbox{${\scriptstyle|}
\hskip-3.35pt\lower6pt\hbox{$#6$
\lower3pt\hbox{\ }}$}}
\def\Esia#1#2#3#4#5#6#7{\displaystyle

#1\!\!-\!\!#2
\!\!-\!\!#3
\!\!-\!\!#4\!\!-\!\!#5
\hskip-24.2pt\lower4.5pt\hbox{${\scriptstyle|}
\hskip-3.35pt\lower6pt\hbox{$#6$\hskip-3.8pt\lower4.5pt\hbox{${\scriptstyle|}
\hskip-3.35pt\lower6pt\hbox{$#7$}$}}
\lower3pt\hbox{\ }$}}
\def\Ese#1#2#3#4#5#6#7{\displaystyle
#1\!\!-\!\!#2
\!\!-\!\!#3
\!\!-\!\!#4\!\!-\!\!#5\!\!-\!\!#6
\hskip-33.6pt\lower4.5pt\hbox{${\scriptstyle|}
\hskip-3.35pt\lower6pt\hbox{$#7$
\lower3pt\hbox{\ }
}$}}
\def\Esea#1#2#3#4#5#6#7#8{\displaystyle
#1\!\!-\!\!#2
\!\!-\!\!#3
\!\!-\!\!#4\!\!-\!\!#5\!\!-\!\!#6\!\!-\!\!#7
\hskip-33.6pt\lower4.5pt\hbox{${\scriptstyle|}
\hskip-3.35pt\lower6pt\hbox{$#8$
\lower3pt\hbox{\ }
}$}}
\def\Eei#1#2#3#4#5#6#7#8{\displaystyle
#1\!\!-\!\!#2
\!\!-\!\!#3
\!\!-\!\!#4\!\!-\!\!#5\!\!-\!\!#6\!\!-\!\!#7
\hskip-43.2pt\lower4.5pt\hbox{${\scriptstyle|}
\hskip-3.35pt\lower6pt\hbox{$#8$
\lower3pt\hbox{\ }
}$}}
\def\Eeia#1#2#3#4#5#6#7#8#9{{\displaystyle
#1\!\!-\!\!#2
\!\!-\!\!#3
\!\!-\!\!#4\!\!-\!\!#5\!\!-\!\!#6\!\!-\!\!#7\!\!-\!\!#8
\hskip-52.2pt\lower4.5pt\hbox{${\scriptstyle|}
\hskip-3.35pt\lower6pt\hbox{$#9$
\lower3pt\hbox{\ }
}$}}}
\def\os{\overline{S}}
\def\oH{\bar{H}}
\def\tS{\tilde{S}}
\def\ts{\tS}
\def\ts7{\tilde{S}_7}
\def\bP{\mathbb P}
\def\bF{\mathbb F}
\def\Aut{\operatorname{Aut}}
\def\Hom{\operatorname{Hom}}
\def\tY{\tilde{Y}}
\def\cO{\Cal O}
\def\cD{\Cal D}
\def\Pic{\operatorname{Pic}}
\def\bZ{\mathbb Z}
\def\bC{\mathbb C}
\def\bG{\mathbb G}
\def\bR{\mathbb R}
\def\cD{\Cal D}
\def\cE{\Cal E}
\def\cF{\Cal F}
\def\tS{\tilde S}
\def\oS{\overline{S}}
\def\tS{\tilde{S}}
\def\oN{\overline{N}}
\def\cY{\Cal Y}
\def\cX{\Cal X}
\def\cM{\Cal M}
\def\cE{\Cal E}
\def\Cal{\mathcal}
\def\Pic{\operatorname{Pic}}
\def\oN{\overline{N}}
\def\os7p{\oS_7'}
\def\os7{\oS_7}
\def\on6{\oN_6}
\def\n6{\oN_6}
\DeclareMathOperator{\MW}{MW}
\DeclareMathOperator{\Fix}{Fix}
\DeclareMathOperator{\Hodge}{Hodge}
\DeclareMathOperator{\Def}{Def}
\def\oT{\bar{T}}
\title[Moduli of surfaces with an anti-canonical cycle]
{Moduli of surfaces with an anti-canonical cycle}
\author{Mark Gross} 
\address{DPMMS, Centre for Mathematical Sciences,
Wilberforce Road, Cambridge CB3 0WB}
\email{mgross@dpmms.cam.ac.uk}
\author{Paul Hacking}
\address{Department of Mathematics and Statistics, Lederle Graduate
Research Tower, University of Massachusetts, Amherst, MA 01003-9305}
\email{hacking@math.umass.edu}
\author{Sean Keel}
\address{Department of Mathematics, 1 University Station C1200, Austin,
TX 78712-0257}
\email{keel@math.utexas.edu}
\def\mydate{\ifcase\month \or January\or February\or March\or
April\or May\or June\or July\or August\or September\or October\or 
November\or December\fi \space\number\day,\space\number\year}
\begin{document}

\begin{abstract}
We prove a global Torelli theorem for pairs $(Y,D)$ where $Y$ is a 
smooth projective
rational surface and $D\in|-K_Y|$ is a cycle of rational curves,
as conjectured by Friedman in 1984.
In addition, we construct natural universal families for such pairs.
\end{abstract}

\maketitle
\tableofcontents
\bigskip


\section{Introduction}

We work throughout over the field $k=\bC$. We work in the algebraic category unless explicitly stated otherwise.

\begin{definition} \label{def2dlp} A \emph{Looijenga pair} $(Y,D)$ 
is a smooth projective surface $Y$ together with a connected singular
nodal curve $D \in |-K_Y|$. 
Note $p_a(D) =1$ by adjunction, so $D$ is either an irreducible rational curve with a single node, or a cycle of smooth rational curves.  
We fix an \emph{orientation} of the cycle $D$, that is, a choice of generator of $H_1(D,\bZ) \cong \bZ$, and an ordering $D=D_1+\cdots+D_n$ of the irreducible components of $D$ compatible with the orientation.
\end{definition}

By an \emph{isomorphism of Looijenga pairs} $(Y^1,D^1)$, $(Y^2,D^2)$ we mean an isomorphism $f \colon Y^1 \rightarrow Y^2$ such that $f(D^1_i)=D^2_i$ for each $i=1,\ldots,n$ and $f$ is compatible with the orientations of $D^1$ and $D^2$. We write $\Aut(Y,D)$ for the group of automorphisms of a Looijenga pair $(Y,D)$ in this sense.

By the birational classification of surfaces, $Y$ in Definition \ref{def2dlp}
is necessarily rational. 

Looijenga pairs were introduced in \cite{L81} as natural log analogs of 
K3 surfaces.  Looijenga studied the cases $n \leq 5$ in detail.
Here we consider moduli of Looijenga pairs with no restriction on $n$.
We prove the global Torelli
Theorem, conjectured by Friedman in \cite{F84}, see Theorem \ref{torelliI}. 
We construct natural universal families (\S\ref{univfam}), give a precise description of the moduli stack of Looijenga pairs (Theorem~\ref{modulistacks}) and identify the monodromy group (Theorem~\ref{strong_monodromy}).
 
The motivation for studying Looijenga pairs comes from several directions.
Our initial interest arose from the construction of \cite{GHKI}. There
we construct a mirror family to any Looijenga pair $(Y,D)$. If the intersection
matrix of the components of $D$ is not negative semi-definite, 
then our construction
yields an algebraic family. We call this the \emph{positive} case.
In the sequel \cite{GHKII} to that work, we will
apply the Torelli theorem to show that in the positive case 
the mirror family is the universal
family of Looijenga pairs constructed here.  
This has a striking
consequence: our construction of the mirror family endows the fibres  
with a canonical
basis of functions. We call elements of this basis theta functions,
as a related construction yields theta functions on abelian
varieties. Realizing this as the  
universal family now
endows each affine surface $U = Y \setminus D$ in the family
with canonical theta  
functions. Though these
include some of the most classical objects in geometry, e.g., $(Y,D)$
could be a cubic surface
with a triangle of lines, in which case $U$ is what Cayley called an  
affine cubic surface, we
do not believe this canonical basis has been previously observed, or  
even conjectured.

A second application of the universal families is given in \cite{GHKIII}, where
we show that Looijenga pairs are closely related to rank $2$ cluster
varieties, and realize the Fock-Goncharov fibration of the cluster $\cX$-variety (in
the rank $2$ case) as a natural 
quotient of our universal families. (See \cite{FG}, \cite{FZ} for the definitions of cluster
varieties.) 
In any event,
Looijenga pairs appear in a number of other settings,
such as
the study of degenerations of $K3$ surfaces: the central fibres for maximal
degenerations, type III in Kulikov's classification, are normal crossing
unions of such pairs. 

Looijenga pairs have an elementary construction:

\begin{definition} \label{defccy} Let $(\oY,\oD)$ be a smooth projective toric
surface, where $\oD := \oY \setminus \bG_m^2$ is the toric boundary, i.e., the
union of toric divisors of $Y$. Let 
$\pi: Y \to \oY$ be the blowup at some number of smooth points (with 
infinitely near points allowed) of $\oD$. Let $D \subset Y$ be the strict
transform of $D$. Then $(Y,D)$ is a Looijenga pair, and we call 
$\pi: Y \to \oY$ a {\it toric model} for $(Y,D)$. 
\end{definition}

Essentially all Looijenga pairs arise in this way (i.e., have a toric
model). Indeed,
define a \emph{simple
toric blowup} $(Y',D') \to (Y,D)$ to be the blowup at a node of $D$,
with $D'$ the reduced inverse image of $D$. 
A {\it toric blowup}
is a composition of simple toric blowups. Note $(Y',D')$ is 
again a Looijenga pair, and the log Calabi-Yau is the same, i.e.,
$Y' \setminus D' = Y \setminus D$. 
We then have (see \cite{GHKI}, Prop.~1.19) the easy fact:

\begin{lemma} \label{tmexists} Given a Looijenga pair $(Y,D)$ 
there is a toric blowup $(Y',D')$ such
that $(Y',D')$ has a toric model. 
\end{lemma}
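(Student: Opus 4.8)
The plan is to prove the statement by reduction to minimal rational surfaces, moving through the class of Looijenga pairs by a small set of elementary birational operations. Call a modification of $(Y,D)$ an \emph{admissible reduction} if it is one of: (a) a simple toric blowup $(Y',D')\to(Y,D)$; (b) the contraction of a $(-1)$-curve $E\subset Y$ with $E\not\subset D$; or (c) the contraction of a boundary component $D_i$ with $D_i^2=-1$, in the case $n\ge 4$ (an inverse simple toric blowup). The first point to establish is that it suffices to prove the lemma for any pair obtained from $(Y,D)$ by admissible reductions: for (a) this is because a toric blowup of $(Y',D')$ is again a toric blowup of $(Y,D)$; for (c) because $(Y,D)$ is then a simple toric blowup of $(Y_1,D_1)$; and for (b) one uses that, by adjunction, $E\cdot D=E\cdot(-K_Y)=1$, so $E$ meets $D$ transversally in a single smooth point of $D$, whence contracting $E$ yields a Looijenga pair $(Y_1,D_1)$ with the same components, $(Y,D)$ is the blowup of $(Y_1,D_1)$ at a smooth point of $D_1$, and a toric model of a toric blowup of $(Y_1,D_1)$ pulls back (using Definition~\ref{defccy}) to one for $(Y,D)$.

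The main body of the argument is then: using (a) to arrange $n\ge 3$, repeatedly apply (b) and (c) to decrease $\rho(Y)$; since $\rho$ strictly drops this terminates, at a pair with $n\ge 3$, no $(-1)$-curve off $D$, and (having exhausted (c)) with either $n=3$ or no boundary component of self-intersection $-1$. If $Y$ has no $(-1)$-curve at all it is a minimal rational surface: if $Y=\bP^2$ then $D\in|-K_Y|$ is a nodal cycle with at least three components, hence a triangle of lines, i.e. the toric boundary, so $\pi=\id$ is a toric model; if $Y=\bF_a$ and $D$ is the toric boundary we are likewise done. Otherwise one is left with a short explicit list of residual pairs: $(\bF_a,D)$ with $D$ an anticanonical cycle that is not the toric boundary (the classification of anticanonical cycles on a Hirzebruch surface shows $D$ is then a cycle of three rational curves with self-intersections $(-a,a+2,0)$), together with the pairs with $n=3$, $Y$ non-minimal, whose only $(-1)$-curve is a boundary component. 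For each such pair one performs toric blowups at a node adjacent to a component of non-negative self-intersection until some $D_i$ has $D_i^2=0$; the pencil $|D_i|$ then defines a ruling $Y\to\bP^1$ whose reducible fibres are chains of rational curves, and a self-intersection count along such a chain shows it must contain a $(-1)$-curve not lying on $D$. Contracting these interior curves, and pruning boundary $(-1)$-curves by (c) whenever $n\ge 4$, keeps decreasing $\rho$, and one checks the process lands on $(\bP^2,\text{triangle})$ or on $(\bF_b,\text{toric boundary})$.

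The main obstacle is precisely this last step and, inseparably, termination. The difficulty is that operation (a) raises $\rho(Y)$, so the argument is not a straightforward induction on $\rho$: one must show that each time the procedure is forced to perform a toric blowup — to escape a minimal $\bF_a$ with a non-toric anticanonical cycle, or to create a zero-self-intersection boundary component — the resulting ruled surface is close enough to a Hirzebruch surface that enough admissible reductions of types (b) and (c) become available to bring $\rho$ back down and reach a manifestly toric pair, without re-entering a residual $\bF_a$-configuration. Making this precise requires the classification of anticanonical cycles on the Hirzebruch surfaces together with a careful analysis of the reducible fibres of the induced ruling; the remainder of the argument is formal.
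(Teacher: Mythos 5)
Your reductions (a), (b), (c) are all sound: the adjunction computation $E\cdot D=1$ for an interior $(-1)$-curve is correct, contracting such a curve does yield a Looijenga pair, and the lifting of toric models through interior blowdowns and through toric blowups works as you describe. The classification of the minimal residual pairs (triangle of lines on $\bP^2$, toric boundary or a $(-a,0,a+2)$-triangle on $\bF_a$, given that your procedure maintains $n\ge 3$) is also correct. The problem is that the proof stops exactly where the content begins. You yourself identify the crux --- that operation (a) raises $\rho$, so the loop ``blow up to create a ruling, contract, possibly land back in a residual configuration'' has no visible termination measure, and that one must verify the reducible fibres of the ruling actually contain \emph{interior} $(-1)$-curves rather than boundary ones --- and then you write ``one checks the process lands on'' a toric pair. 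That check is the lemma. Note also that the paper itself does not prove this statement; it is quoted from \cite{GHKI}, Prop.~1.19, so there is no in-text argument to lean on, and as written your proposal is a plan rather than a proof.

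The standard way to close the gap is to restructure so that toric blowups are performed only once, which removes the termination issue entirely. First contract interior $(-1)$-curves, and contract boundary $(-1)$-curves via inverse simple toric blowups, until some component satisfies $D_i^2\ge 0$; this terminates because $\rho$ strictly drops, and it must succeed because a minimal rational surface cannot carry an anticanonical cycle all of whose components have negative self-intersection (on $\bP^2$ every curve has positive square; on $\bF_a$ the only irreducible curve of negative square is the negative section). Then perform toric blowups at nodes adjacent to $D_i$ until $D_i^2=0$ (arranging $n\ge 3$ as well), so that $|D_i|$ defines a $\bP^1$-fibration with $D_{i\pm 1}$ as sections, and finish by contracting, fibre by fibre, all components of each reducible fibre except the one meeting the section $D_{i+1}$, checking that this realizes $Y$ as an iterated blowup of a Hirzebruch surface at smooth points of the (strict transform of the) toric boundary. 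That last verification is still a genuine argument --- one must rule out that a curve being contracted is a component of $D$, using that $F\cdot D_{i+1}=1$ for each fibre $F$ and that the remaining components of the cycle $D$ sit inside fibres as chains --- but it is a single finite analysis rather than an iteration whose convergence is unproven. In your current write-up, this analysis, and with it the proof, is missing.
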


For any question we consider, passing to
a toric blowup will be at most a notational inconvenience.

To give a precise statement of our results, we first give a number of
basic definitions.

\begin{definition} Let $(Y,D)$ be a Looijenga pair.
\begin{enumerate}
\item
A curve $C \subset Y$ is {\it interior} if no irreducible
component of $C$ is contained in $D$. 
\item
An {\it internal $(-2)$-curve} means a smooth rational curve of
self-intersection $-2$ disjoint from $D$. 
\item
$(Y,D)$ is
{\it generic} if it has no internal $(-2)$-curves. 
\end{enumerate}
\end{definition}

Any Looijenga pair is deformation equivalent to a generic pair, see Proposition~\ref{generics}.
Note that, by adjunction, any irreducible interior curve with negative self-intersection number is either 
a $(-1)$-curve meeting $D$ transversely at a single smooth point,
or an internal $(-2)$-curve. Note also that if $(Y,D)$ is generic and $\pi:Y\to\oY$ is a toric model, 
then the blown up points are necessarily distinct (as opposed to infinitely near).

We next consider the notion of periods of Looijenga pairs.
We first note (see Lemma~\ref{pic0D}) 
that the orientation of $D$ determines a canonical identification
$\bG_m = \Pic^0(D)$, where the latter is the connected component
of the identity of $\Pic(D)$. 

\begin{definition}
Let 
$$
D^{\perp} := \{\alpha \in \Pic(Y)\,|\, \alpha \cdot [D_i] =0 \text{ for all }
i\}.
$$
Restriction of line bundles determines a canonical homomorphism
\begin{equation}
\label{unmarkedperiod}
\phi_Y: D^{\perp} \to \Pic^0(D) = \bG_m,
\quad L\mapsto L|_D.
\end{equation}
The homomorphism $\phi_Y \in T_{D^{\perp}}:= \Hom(D^{\perp},\bG_m)$
is called the 
\emph{period point} of $Y$.
\end{definition}

Note $Y \setminus D$ comes
with a canonical (up to scaling) nowhere-vanishing $2$-form, $\omega$, with simple poles along $D$.
One can show that 
$\phi_Y$ is equivalent to the data of periods of $\omega$ over cycles in
$H_2(Y \setminus D,\bZ)$, see \cite{F84}. This motivates the term
``period.''

As well as the notion of periods, we also need the following additional
notions to state the Torelli theorem.

\begin{definition} \label{rootsdef}
Let $(Y,D)$ be a Looijenga pair.
\begin{enumerate}
\item
The {\it roots} $\Phi \subset \Pic(Y)$
are those classes in 
$D^{\perp} \subset \Pic(Y)$ with square $-2$ which 
are realized by an internal 
$(-2)$-curve $C$ on a deformation equivalent pair $(Y',D')$.
More precisely, 
there is a family $(\cY,\cD)/S$, a path $\gamma \colon [0,1] \rightarrow S$,
and identifications  $(Y,D)=(\cY_{\gamma(0)},\cD_{\gamma(0)})$, 
$(Y',D')=(\cY_{\gamma(1)},\cD_{\gamma(1)})$, such that the isomorphism
$$H^2(Y',\bZ) \rightarrow H^2(Y,\bZ)$$ 
induced by parallel transport along $\gamma$ sends $[C]$ to $\alpha$.
\item Let $\Delta_Y \subset \Pic(Y)$ be the
set of classes of internal $(-2)$-curves.
\item Let $\Phi_Y \subset \Phi \subset \Pic(Y)$
be the subset of roots, $\alpha$, with $\phi_{Y}(\alpha) = 1$. Note that
$\Delta_Y \subset \Phi_Y\subset \Phi$. 

\item
Let $W \subset \Aut(\Pic(Y))$ be the subgroup 
generated by the reflections
$$
s_{\alpha} :\Pic(Y) \to \Pic(Y),\quad 
\beta \mapsto \beta + \langle\alpha,\beta\rangle
\alpha
$$
for $\alpha \in \Phi$. Let $W_Y \subset W$ be the subgroup
generated by $s_{\alpha}$ with $\alpha \in \Delta_Y$. 
\end{enumerate}
\end{definition}

It is clear from the definitions that $\Phi$ is invariant under parallel transport,
and 
$\Delta_Y,\Phi_Y,\Phi$ are all
invariant under $\Aut(Y,D)$. 
Further, the sets $\Phi$, $\Phi_Y$, $W$, $W_Y$ are easily seen to be 
invariant under toric blowup. Indeed, let $\tau:(Y',D')\rightarrow (Y,D)$
be a blow-up of a node of $D$. Then under pull-back $\tau^*$ of divisors,
$D^{\perp}$ is isomorphic to $(D')^{\perp}$ as lattices.

We will show that $\Phi_Y = W_Y \cdot \Delta_Y$, see Proposition~\ref{simplerootsgenerate}.

When $n \leq 5$ and the intersection matrix $(D_i \cdot D_j)$ is negative semidefinite, $\Phi$ contains a natural {\it root basis},
which is central to much of Looijenga's analysis. No such
basis exists in general. 

\begin{definition} \label{cpdef}
Let $(Y,D)$ be a Looijenga pair.
\begin{enumerate}
\item The cone
$\{x \in \Pic(Y)_{\bR} \ | \ x^2 > 0\}$ has two connected components.
Let $C^+$ be the connected component containing all the ample classes.
\item
For a given ample $H$ let $\tilde{\cM} \subset \Pic(Y)$ be the collection
of classes $E$ with $E^2 = K_Y\cdot E = -1$,  and $E \cdot H > 0$.
Note $\tilde{\cM}$ is independent of $H$, see Lemma \ref{C++lemma}. 
Let $C^{++} \subset C^{+}$ be the subcone defined by the
inequalities $x \cdot E \geq 0$ for all $E \in \tilde{\cM}$.
\item Let $C^{++}_D \subset C^{++}$ be the subcone 
where additionally $x \cdot [D_i] \geq 0$ for all $i$.
\end{enumerate}
 
\end{definition}

By Lemma \ref{C++lemma},
$C^+,C^{++},C^{++}_D$ and $\tilde{\cM}$ are all independent of 
deformation of Looijenga pairs (i.e., preserved by parallel transport).

Our main result is then:

\begin{theorem}(Torelli Theorem) \label{torelliI}
Let $(Y_1,D),(Y_2,D)$ be Looijenga pairs and 
let 
\[
\mu \colon \Pic(Y_1) \to \Pic(Y_2)
\]
be an isomorphism of lattices. 

{\bf Global Torelli:} $\mu = f^*$ for an isomorphism of 
pairs 
$f:(Y_2,D) \to (Y_1,D)$ iff all the following hold:
\begin{enumerate}
\item $\mu([D_i]) = [D_i]$ for all $i$. 
\item $\mu(C^{++})=C^{++}$.
\item $\mu(\Delta_{Y_1}) = \Delta_{Y_2}$.
\item $\phi_{Y_2} \circ \mu = \phi_{Y_1}$.
\end{enumerate}
If $f$ exists, the possibilities are a torsor for 
$\Hom(N',\bG_m)$ where $N'$ is the cokernel of the map
\[
\Pic(Y)\rightarrow\ZZ^n,\quad L\mapsto (L\cdot D_i)_{1\le i\le n}.
\]

{\bf Weak Torelli}:
There is an element $g$ in the Weyl group $W_{Y_1}$ such that
$\mu \circ g = f^*$ for an isomorphism of pairs
$f:(Y_2,D)\rightarrow (Y_1,D)$ iff $\mu$ satisfies conditions
(1),(2), and (4). If $g$ exists, it is
unique. 
\end{theorem}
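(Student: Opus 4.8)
The strategy is to reduce the Torelli statement to a question about the chamber decomposition of the positive cone $C^+$ by the hyperplanes $\alpha^\perp$, $\alpha\in\Phi$, together with control of the "period" refinement, and then to appeal to the key input that period point plus marked effective-curve data recovers the pair. First I would dispense with the easy direction: if $f\colon (Y_2,D)\to (Y_1,D)$ is an isomorphism of pairs, then $\mu = f^*$ manifestly satisfies (1) because $f(D_i^{(2)}) = D_i^{(1)}$; it preserves ample classes hence $C^+$, and it permutes the classes $E$ with $E^2 = K\cdot E = -1$, $E\cdot H>0$, so (by Lemma \ref{C++lemma}) it preserves $C^{++}$, giving (2); it carries internal $(-2)$-curves to internal $(-2)$-curves so (3) holds; and (4) is the statement that $f^*$ is compatible with the restriction maps to $\Pic^0(D) = \bG_m$, which holds because $f|_D$ respects the orientation (hence acts trivially on $\Pic^0(D)$, using Lemma \ref{pic0D}). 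The torsor statement for the ambiguity in $f$ is a separate, essentially cohomological computation: two isomorphisms $f, f'$ inducing the same $\mu$ differ by an automorphism of $(Y_1,D)$ acting trivially on $\Pic(Y_1)$, and such automorphisms act on $D$ fixing each component and each node, hence are classified by $\Hom$ of the cokernel $N'$ of $\Pic(Y)\to\ZZ^n$ into $\bG_m$ (the $\bG_m$'s being the automorphisms of each $(D_i, \text{nodes})\cong (\bP^1,\{0,\infty\})$, glued along the relations imposed by $\Pic(Y)$).

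For the hard direction of Global Torelli, I would first reduce to the \emph{generic} case: by Proposition \ref{generics} every Looijenga pair is deformation equivalent to a generic one, and all the relevant structures ($C^+$, $C^{++}$, $\tilde\cM$, $\Phi$) are preserved by parallel transport (by Lemma \ref{C++lemma} and the remarks after Definition \ref{rootsdef}); one must check an isomorphism $\mu$ satisfying (1)--(4) is detected after such a deformation, which is where conditions (3) and (4) enter — (3) says $\mu$ matches up the \emph{actual} internal $(-2)$-curves of the two surfaces, not merely the roots, so one cannot accidentally pass between non-isomorphic chambers. Next, by Lemma \ref{tmexists} and the toric-blowup invariance of everything in sight, I may assume both $(Y_i,D)$ admit toric models. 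Then the plan is: use $\mu$ to transport the effective cone and the distinguished classes (the $(-1)$-curves meeting $D$, i.e. the exceptional classes of toric models) and combine with condition (4) — $\phi_{Y_2}\circ\mu = \phi_{Y_1}$ — to conclude that there is a toric model of $Y_2$ whose exceptional curves have, under $\mu^{-1}$, exactly the same classes \emph{and the same period data} as those of a toric model of $Y_1$. Since a Looijenga pair with a toric model is determined by the common toric surface $(\oY,\oD)$ together with the positions of the blown-up points on $\oD$, and those positions are precisely read off from $\phi_Y$ restricted to the span of the exceptional classes, this produces an isomorphism $f\colon Y_2\to Y_1$ of pairs; one then checks $f^* = \mu$ on all of $\Pic$, not just on the span of exceptional classes, using that these classes together with the $[D_i]$ generate $\Pic(Y)$ after a toric blowup.

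Finally, Weak Torelli follows formally from Global Torelli once we know $\Phi_{Y_1} = W_{Y_1}\cdot \Delta_{Y_1}$ (Proposition \ref{simplerootsgenerate}) and that $W_{Y_1}$ acts simply transitively on the set of chambers of $C^{++}$ cut out by the walls $\alpha^\perp$, $\alpha\in\Delta_{Y_1}$, that meet the interior. Given $\mu$ satisfying (1), (2), (4) but not necessarily (3): the set $\mu^{-1}(\Delta_{Y_2})\subset \Phi_{Y_1}$ is again a set of "simple roots" for some chamber, so there is a unique $g\in W_{Y_1}$ with $g(\Delta_{Y_1}) = \mu^{-1}(\Delta_{Y_2})$; since $g$ also fixes each $[D_i]$ (the $\alpha\in\Delta_{Y_1}$ lie in $D^\perp$, so each $s_\alpha$ fixes the $[D_i]$), preserves $C^{++}$ (reflections in walls of $C^{++}$), and fixes $\phi_{Y_1}$ (because $\phi_{Y_1}(\alpha)=1$ for $\alpha\in\Delta_{Y_1}\subset\Phi_{Y_1}$, so $s_\alpha$ is compatible with $\phi_{Y_1}$), the composite $\mu\circ g$ satisfies all four conditions and Global Torelli applies. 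Uniqueness of $g$ is the simple transitivity of the $W_{Y_1}$-action on chambers.

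\textbf{Main obstacle.} The crux is the middle paragraph: showing that conditions (1)--(4), together with deformation to the generic case and passage to a toric model, \emph{force} the blown-up points on $\oD$ to coincide (up to the torus action), i.e. that the period point plus the combinatorics of exceptional classes is a \emph{complete} invariant of a Looijenga pair with a toric model. Controlling precisely which $(-1)$-classes are \emph{effective} (equivalently, honestly represented by curves meeting $D$) on each $Y_i$ — as opposed to merely being abstract classes in $\tilde\cM$ — is the delicate point, and this is exactly where the constraint $\mu(C^{++}) = C^{++}$ in (2) does its work, since effectivity of a $(-1)$-class is governed by nonnegativity against the nef cone, hence by the wall structure defining $C^{++}$.
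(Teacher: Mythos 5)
Your overall architecture (easy direction plus the torsor statement via automorphisms acting trivially on $\Pic$; hard direction via toric models and the principle that the marked period point determines the positions of the blown-up points; weak Torelli via $\Phi_Y=W_Y\cdot\Delta_Y$ and simple transitivity on Weyl chambers) is the paper's, and your weak Torelli paragraph is essentially correct as stated, except that you need Friedman's lattice-theoretic characterization of roots (Theorem~\ref{Friedman_roots}) to know $\mu(\Phi)=\Phi$ before you can conclude $\mu^{-1}(\Delta_{Y_2})\subset\Phi_{Y_1}$; the definition of $\Phi$ via deformations gives no a priori invariance under an abstract lattice isometry. The serious problem is the opening move of your hard direction: the reduction to the generic case via Proposition~\ref{generics} cannot work. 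Deforming both pairs to generic ones and proving the statement there only produces an isomorphism of the \emph{deformed} pairs, and isomorphisms do not specialize (the marked moduli space is the non-separated space $\tilde T_{Y_0}$: pairs with the same period point but different nef chambers are non-isomorphic limits of a common generic family). More decisively, for generic pairs $\Delta_Y=\emptyset$, so if the reduction were legitimate condition (3) would be superfluous and global Torelli would follow from (1), (2), (4) alone; but taking $Y_1=Y_2=Y$ non-generic and $\mu=s_\alpha$ for $\alpha\in\Delta_Y$ gives a $\mu$ satisfying (1), (2), (4) (by Lemma~\ref{C++lemma} and $\phi_Y(\alpha)=1$) that sends the effective class $\alpha$ to $-\alpha$ and hence is induced by no isomorphism. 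The paper instead works directly with the given pairs: conditions (1), (2), (3) \emph{together} yield $\mu(\Nef(Y_1))=\Nef(Y_2)$ via Lemma~\ref{Nefcones}, since $\Nef(Y)$ is cut out of $\overline{C^{++}_D}$ by the walls $\alpha^\perp$, $\alpha\in\Delta_Y$ --- which also shows that your attribution of the effectivity control solely to condition (2) is off; $C^{++}$ alone does not determine the nef cone.

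The step you label the ``main obstacle'' is indeed where the content lies, and your proposal does not resolve it. What is actually needed, and what the paper supplies, is: (a) the images under $\mu$ of the exceptional classes of a toric model of $Y_1$ are classes of honest disjoint chains of interior curves on $Y_2$ --- this follows from $\mu(\Nef(Y_1))=\Nef(Y_2)$, duality with the Mori cone (Lemma~\ref{Moricones}), the characterization of $(-1)$-classes as extremal, and condition (3) for the internal $(-2)$-curves in the chains; (b) contracting these chains on $Y_2$ yields a \emph{toric} pair --- this is not automatic and is proved by the Euler-characteristic inequality $e(Y\setminus D)\ge 0$ with equality iff $(Y,D)$ is toric; (c) the two toric targets are isomorphic (Remark~\ref{toric_iso_type}) and then condition (4) pins down the blown-up points up to the torus action --- this is Proposition~\ref{closetotorelli} together with Corollary~\ref{torcor}, which also delivers the $\Hom(N',\bG_m)$ torsor statement via the exact sequence of Proposition~\ref{basicseq}. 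Your plan correctly identifies (c) as the key input but treats it as a black box, and omits (b) entirely.
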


\begin{remark}\label{torellinefcone}
We show that for a Looijenga pair $(Y,D)$ the nef cone $\Nef(Y)$ is the subcone of $\overline{C^{++}_D}$ defined by $x \cdot \alpha \ge 0$ for all $\alpha \in \Delta_Y$. See Lemma~\ref{Nefcones}.
Thus the global Torelli theorem can be restated as follows: Given Looijenga pairs $(Y_1,D)$ and $(Y_2,D)$ and an isomorphism of lattices $\mu \colon \Pic(Y_1) \rightarrow \Pic(Y_2)$, there is an isomorphism $f \colon (Y_2,D) \rightarrow (Y_1,D)$ of Looijenga pairs 
such that $\mu=f^*$ iff $\mu(\Nef(Y_1))=\Nef(Y_2)$ and $\mu([D_i])=[D_i]$ for each $i$.
\end{remark}

\begin{remark} \label{screwup} In a preliminary version of this note we claimed the Torelli
theorem with (2) replaced by the conditions $\mu(C^+) = C^+$ and $\mu(\Phi)=\Phi$.
R. Friedman showed us counterexamples to this statement \cite{F13}.
We note the weaker condition $\mu(C^+)=C^+$ is sufficient if $D$ supports a divisor of positive square, 
or if $\mu(H)$ is ample for some ample $H$, as either condition is easily seen to imply $\tilde{\cM}$, and thus $C^{++}$, is preserved. 
In \cite{F13} Friedman gives various sufficient conditions under which (2) may be replaced by the conditions $\mu(C^+) = C^+$ and $\mu(\Phi)=\Phi$
(all have the flavor of guaranteeing that $\Phi$ is sufficiently big).
\end{remark} 

The proof of the global Torelli theorem is carried out in \S\ref{torellisection}. The key point there is the notion
of a marked Looijenga pair and periods for marked Looijenga pairs.

\begin{definition}
\label{markingdef}
Let $(Y,D)$ be a Looijenga pair.
\begin{enumerate}
\item
A \emph{marking of $D$} is a choice of points
$p_i \in D_i^o$ for each $i$, where $D_i^o$ denotes the intersection of $D_i$ with the smooth locus of $D$.
This is equivalent to the choice of an isomorphism
$i: D^{\can} \to D$
of $D$ with a fixed cycle of rational curves $D^{\can}$. 
The possible markings of $D$ are a torsor for $\Aut^0(D)=\bG_m^n$, 
the connected
component of the identity of $\Aut(D)$.
\item
Fix $(Y_0,D)$ generic. A \emph{marking of $\Pic(Y)$} is an isomorphism
of lattices $\mu \colon \Pic(Y_0) \to \Pic(Y)$ such that $\mu([D_i])=[D_i]$ for each $i$ and $\mu(C^{++})=C^{++}$. 
\item
Markings $p_i,\mu$ determine a 
\emph{marked period point}:
\[
\phi_{((Y,D),p_i,\mu)} \in T_{Y_0}:= \Hom(\Pic(Y_0),\bG_m)
\]
by
\begin{equation} \label{basiceq}
\phi(L) := (\mu(L)|_D)^{-1} \otimes \cO_D\big(\sum (L \cdot D_i) p_i\big)
\in \Pic^0(D) = \bG_m.
\end{equation}
\end{enumerate}
\end{definition}

The global Torelli theorem is proved by first showing that given
a toric model for $(Y,D)$, the marked period point determines the location
of the blowups, and hence determines $Y$: this is essentially the
content of Proposition \ref{closetotorelli}. A bit more work leads
to the global Torelli theorem.

\bigskip

\emph{Acknowledgments}.
We received a great deal of inspiration from
numerous extended conversations with Robert Friedman, and
from ideas in his unpublished note \cite{F84}. Our original statement
of the Torelli theorem was false, see Remark \ref{screwup}. Friedman set us 
straight. We also had many
very helpful conversations with Eduard Looijenga, and the referee provided
numerous helpful comments. The first author
was partially supported by NSF grant DMS-1105871, the second by 
NSF grants DMS-0968824 and DMS-1201439, and the third by NSF grant DMS-0854747.


\section{The global Torelli Theorem} \label{torellisection}

\begin{lemma} \label{pic0D} 
Let $D$ be a cycle of $n$ rational curves, with cyclic ordering
of the components. This cyclic ordering induces:
\begin{enumerate}
\item An identification
$\Pic^0(D)=\Gm$, where the former is the group of
numerically trivial line bundles.
\item An identification $\Aut^0(D)=(\Gm)^n$, where the former
is the identity component of the automorphism group of $D$.
\end{enumerate}
\end{lemma}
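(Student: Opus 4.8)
The plan is to read off $\Pic^0(D)$ and $\Aut^0(D)$ directly from the normalization $\nu\colon\tilde D\to D$, and to check that the cyclic ordering of the components $D=D_1+\cdots+D_n$ is exactly the extra data needed to rigidify the resulting identifications. Here $\tilde D$ is a disjoint union of copies of $\bP^1$ — one per component $D_i$ when $n\ge 2$, and a single $\bP^1$ with two marked points over the unique node when $n=1$ — and $D$ has exactly $n$ nodes in all cases.

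For (1) I would use the exact sequence of sheaves of abelian groups on $D$
\[
1\to\mathcal{O}_D^*\to\nu_*\mathcal{O}_{\tilde D}^*\to\bigoplus_q\Gm\to 1,
\]
the sum taken over the nodes $q$ of $D$, where the map into the $q$-th summand sends a unit to the ratio of its two values on the branches through $q$. Passing to cohomology, and using that a skyscraper sheaf has vanishing $H^1$, that $\Pic(\tilde D)=\ZZ^n$ via the multidegree, and that $H^1(D,\nu_*\mathcal{O}_{\tilde D}^*)=H^1(\tilde D,\mathcal{O}_{\tilde D}^*)$ since $\nu$ is finite, one obtains a natural identification of $\Pic^0(D)=\ker(\Pic(D)\to\Pic(\tilde D))$ with the cokernel of $\Gamma(\tilde D,\mathcal{O}_{\tilde D}^*)\to\bigoplus_q\Gm$. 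Now $\Gamma(\tilde D,\mathcal{O}_{\tilde D}^*)$ is a product of copies of $k^*$, one per component of $\tilde D$, and with a consistent choice of direction around the cycle this map sends a tuple $(c_i)$ of nonzero constants to the tuple whose entry at the node $D_i\cap D_{i+1}$ (indices mod $n$) is $c_ic_{i+1}^{-1}$. A telescoping argument shows the image is precisely the subgroup of $(k^*)^n$ where the product of the coordinates is $1$, so the total product $(\lambda_q)\mapsto\prod_q\lambda_q$ descends to an isomorphism of the cokernel with $\Gm$. Since this product is only meaningful after the nodes have been cyclically ordered and oriented, this is exactly the cyclic ordering producing the canonical identification $\Pic^0(D)=\Gm$ (the case $n=1$ is the same computation, with the single ratio equal to $1$).

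For (2) I would note that every automorphism of $D$ lifts uniquely to $\tilde D$, so $\Aut(D)$ acts on the finite set consisting of the irreducible components of $\tilde D$ together with the points of $\nu^{-1}(\Sing D)$; the kernel $G$ of this action is open of finite index and contains $\Aut^0(D)$, since a connected group has no nontrivial homomorphism to a finite group. Restricting an element of $G$ to $D_i\cong\bP^1$ gives an automorphism of $\bP^1$ fixing the two nodes of $D$ lying on $D_i$, i.e.\ an element of a fixed maximal torus $\cong\Gm$; conversely any tuple of such automorphisms, one per component, glues to an element of $G$, because a node is a single reduced point that imposes no gluing condition beyond being fixed (there is no tangent matching across a node). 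Hence $G\cong(\Gm)^n$; in particular $G$ is connected, so $G=\Aut^0(D)$, and the cyclic ordering singles out on each $D_i$ which adjacent node is the zero and which the pole of the coordinate, pinning down each $\Gm$ factor. (The case $n=1$ is included: $\tilde D=\bP^1$, the two branch points play the roles of $0$ and $\infty$, and $G\cong\Gm$.)

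The normalization computations are routine; the genuinely delicate point is the bookkeeping that makes the isomorphisms \emph{canonical} — checking that ``product around the oriented cycle'' in (1) and the per-component designation of $0$ and $\infty$ in (2) depend only on the cyclic ordering and not on the auxiliary trivializations — and making sure the degenerate case $n=1$ fits the same framework.
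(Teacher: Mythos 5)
Your proof is correct and lands on exactly the same canonical identifications as the paper: $\Pic^0(D)\to\Gm$ is the product, taken around the oriented cycle, of the ratios of the two branch values (the gluing data) at the nodes, and $\Aut^0(D)=(\Gm)^n$ acts componentwise on the $D_i\cong\bP^1$ fixing the two adjacent nodes. The paper simply quotes the abstract isomorphism $\Pic^0(D)\cong\Gm$ as well known and writes both maps down explicitly for $n\ge 3$ (leaving $n=1,2$ to the reader), whereas you derive everything from the normalization sequence and additionally check that the componentwise torus exhausts $\Aut^0(D)$ and that $n=1$ fits the same framework --- the same approach, with the standard facts proved rather than cited.
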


\begin{proof} For (1), the fact that there is an abstract isomorphism
$\Gm\cong\Pic^0(D)$ is well-known, and the automorphism group of $\Gm$
as a group is
$\{1,-1\}$, so there are only two choices of identification.
Here is an explicit construction of an identification determined
by the orientation, which will be used throughout.
We assume $n \ge 3$, leaving the straightforward modifications for $n=1,2$ to the reader.
For $L \in \Pic^0(D)$, there is a nowhere-vanishing section 
$\sigma_i \in \Gamma(L|_{D_i})$. 
Let $\lambda_i := \sigma_{i+1}(p_{i,i+1})/\sigma_i(p_{i,i+1}) \in \bG_m$,
where $p_{i,i+1} := D_i \cap D_{i+1}$. 
Obviously  
$\lambda(L) := \prod_i \lambda_i$ is independent of the choice of 
$\sigma_i$. The map $L \mapsto \lambda(L)$ gives the canonical isomorphism. 

For (2), let $(x_i,y_i)$ be the homogeneous 
coordinates on $D_i$ with $x_i=0$ being
the point $D_{i-1}\cap D_i$. Then we take the $i^{th}$ copy of $\Gm$
to act on $D_i$ by $(x_i,y_i)\mapsto (x_i,\lambda y_i)$ for $\lambda\in\Gm$.
The $i^{th}$ copy of $\Gm$ acts trivially on $D_j$ for $j\not=i$.
\end{proof} 

Recall from Definition~\ref{defccy} the notion of a toric model of a Looijenga pair.

\begin{definition}
\label{exconfdef}
An {\it exceptional
configuration} for generic $(Y,D)$ means an ordered collection
$E_{ij} \in \Pic(Y)$ of classes of exceptional divisors for
a toric model. (Here for each $i$ the $E_{ij}$ are the exceptional divisors meeting the component $D_i$ of $D$.)
This is an ordered collection of disjoint interior
$(-1)$-curves. If $(Y,D)$ is not necessarily generic, then
by a {\it limiting configuration} in $\Pic(Y)$ we mean the
parallel transport (for the Gauss-Manin connection in a 
family of Looijenga pairs) of an exceptional configuration
on a generic pair.

We say that two exceptional configurations $\meij,\mfij$ for $(Y,D)$ have
the same {\it combinatorial type} if for each $i$ the number of divisors meeting $D_i$ is the same.

More generally, 
we extend the notion of exceptional or limiting 
configuration to mean
the data of a toric blowup $(Y',D') \to (Y,D)$ together with an
exceptional or limiting configuration on $(Y',D')$. 
\end{definition}

For generic pairs, limiting and exceptional
are the same, see Lemma~\ref{lim=exc_for_gen}.

\begin{definition} \label{excurves} 
A toric model 
$\pi: (Y,D) \to (\oY,\oD)$ is an iterated blowup
at some collection of (not necessarily distinct) 
points $q_{ij} \in \oD_i^o$ (where
$\oD_i^o \cong \bG_m$ is the complement of the nodes of $\oD$
along $\oD_i$). As such, the connected components of
the exceptional locus are disjoint unions 
of chains $E_1+\cdots+E_r$ of smooth rational curves with
self-intersections $-2,-2,\dots,-1$ (or just
a single $(-1)$-curve), where the length, $r$, is 
the number of times we blow up
at the corresponding point. This chain supports a unique
collection of $r$ reduced connected
chains, $C_1,\dots,C_r$, 
each of self-intersection $-1$, ordered by inclusion,
\[
C_1 = E_r, C_2 = E_r+E_{r-1}, \quad\dots\quad C_r = E_r+E_{r-1}+\cdots+E_1.
\]
Following
Looijenga, we refer to these chains as the \emph{exceptional curves}
for this toric model. Each such curve is determined by its
class, and they are partially ordered by
inclusion. Note if we produce a family 
$(\cY,\cD)/S$ of Looijenga pairs by varying the points $q_{ij}$ 
and choosing an 
order with which to make the iterated blowups, so that in the general
fibre we blow up distinct points, then each of these exceptional curves
on $Y$ is the limit of a unique smooth exceptional $(-1)$-curve on the
general fibre. 
\end{definition}

\begin{remark}\label{toric_iso_type}
Note that the isomorphism class of a toric
Looijenga pair $(\oY,\oD)$ 
is determined by the intersection numbers $\oD_i^2$.
Indeed, the isomorphism type of a smooth projective toric surface is determined by the self-intersection numbers of the components of the boundary divisor (because these determine the fan of the surface, see e.g. \cite{Fu93}, \S2.5).
\end{remark}

Note $(Y,D)$ together with
the classes $\meij$ of exceptional curves do not determine
by themselves the points $q_{ij} \in \oY$. Indeed, the classes determine
a birational contraction $p: (Y,D)\to (W,D)$, and $(W,D)$
is abstractly isomorphic to $(\oY,D)$, but further data is
needed to specify an identification: this is the data of a marking of
$D$. In the next couple of
lemmas we show that the positions of the $q_{ij}$ are determined
by the marked period point. From this the
global Torelli result contained in Theorem \ref{torelliI} will follow. 

\begin{lemma} \label{funny} Let $(Y,D)$ be a Looijenga pair.
For $\alpha \in \Aut^0(D)$ and 
$L \in \Pic(D)$ let 
\[
\psi_{\alpha}(L) = 
L^{-1} \otimes \alpha^*(L) \in \Pic^0(D)
\]
This gives a homomorphism $\psi: \Aut^0(D) \to \Hom(\Pic(Y),\Pic^0(D))$
via
\[
\psi(\alpha)(L)=\psi_{\alpha}(L|_D).
\]
Under the identifications $\Aut^0(D) = \bG_m^n$, $\Pic^0(D) = \bG_m$
of Lemma \ref{pic0D}, 
\[
\psi(\lambda_1,\dots,\lambda_n)(L)
= \prod_i \lambda_i^{\deg L|_{D_i}}
\]
for $L \in \Pic(D)$.
\end{lemma}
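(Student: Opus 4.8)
The plan is to reduce the stated identity to a computation on a generating set of $\Pic(D)$, and there to unwind the explicit model $\Pic^0(D)=\Gm$ of Lemma~\ref{pic0D}(1). (I will work with the recipe as stated there for $n\ge 3$; the cases $n\le 2$ are handled by the analogous modified recipe.)

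First I would dispose of the formal points. Because $\alpha$ lies in the identity component $\Aut^0(D)$, it acts trivially on the discrete group $\Pic(D)/\Pic^0(D)\cong\bZ^n$ (the multidegree), so for any $M\in\Pic(D)$ the class $\psi_\alpha(M)=M^{-1}\otimes\alpha^*M$ has multidegree $0$ and hence lies in $\Pic^0(D)$; this makes $\psi(\alpha)\in\Hom(\Pic(Y),\Pic^0(D))$ well defined, and $\psi_\alpha\colon\Pic(D)\to\Pic^0(D)$ is manifestly a group homomorphism. I would also record that $\Aut^0(D)$ is connected whereas the automorphism group of the algebraic group $\Pic^0(D)\cong\Gm$ is the finite group $\{\pm1\}$, so $\Aut^0(D)$ acts trivially on $\Pic^0(D)$ by pullback; in particular $\psi_\alpha$ vanishes on the subgroup $\Pic^0(D)\subset\Pic(D)$. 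Since $L\mapsto\prod_i\lambda_i^{\deg L|_{D_i}}$ is also a homomorphism vanishing on $\Pic^0(D)$ (the $\deg L|_{D_i}$ are all $0$ there), both sides factor through the multidegree $\Pic(D)\to\bZ^n$, so it is enough to check the equality on one lift of each standard basis vector. I would take the bundles $\shO_D(p_j)$ with $p_j\in D_j^o$, $j=1,\dots,n$; for these the right-hand side is $\lambda_j$, so the claim comes down to showing
\[
\psi_\alpha\big(\shO_D(p_j)\big)=\shO_D\big(\alpha^{-1}(p_j)-p_j\big)=\lambda_j
\]
in $\Pic^0(D)=\Gm$, where $\alpha=(\lambda_1,\dots,\lambda_n)$.

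To evaluate the left-hand side I would set $M:=\shO_D(\alpha^{-1}(p_j)-p_j)$ and note that its defining divisor lies in $D_j^o$ and so misses every node; hence $M$ is canonically trivial on each component $D_i$ with $i\neq j$ and near every node. In the recipe of Lemma~\ref{pic0D}(1) one may then take $\sigma_i=1$ for $i\neq j$ and let $\sigma_j$ be the rational function $f$ on $D_j\cong\bP^1$ with $\div(f)=p_j-\alpha^{-1}(p_j)$, viewed as a nowhere-vanishing section of $M|_{D_j}$. All the factors $\lambda_i$ of $\lambda(M)=\prod_i\lambda_i$ are then $1$ except at the two nodes bounding $D_j$, where they equal $f$ evaluated there, so $\lambda(M)=f(D_{j-1}\cap D_j)/f(D_j\cap D_{j+1})$. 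Using that, by the convention of Lemma~\ref{pic0D}(2), $\alpha$ fixes the two nodes of $D_j$ and acts as multiplication by $\lambda_j$ in the corresponding affine coordinate, the point $\alpha^{-1}(p_j)$ is obtained from $p_j$ by scaling by $\lambda_j^{-1}$, and a short direct computation then evaluates this ratio of values of $f$ to $\lambda_j$. This gives the formula; the homomorphism property of $\alpha\mapsto\psi(\alpha)$ follows immediately from the formula together with the group isomorphism $\Aut^0(D)\cong\Gm^n$.

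The one step requiring real attention --- though it is bookkeeping, not a genuine obstacle --- is making the orientation and coordinate conventions of Lemma~\ref{pic0D}(1) and~(2) interact correctly: the inverse in $\alpha^*\shO_D(p)=\shO_D(\alpha^{-1}(p))$, the choice $\lambda_i=\sigma_{i+1}/\sigma_i$ rather than its reciprocal, and the identification of the distinguished coordinate on $D_j$ must all be tracked consistently so that the exponent of $\lambda_j$ in the final answer comes out as $+1$. Everything else is routine.
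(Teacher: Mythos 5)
Your argument is correct and takes essentially the same route as the paper's own (very terse) proof: both reduce, using that $\psi_\alpha$ kills $\Pic^0(D)$ and that everything is bilinear, to evaluating $\psi_\alpha$ on $\shO_D(q)$ for $q\in D_j^o$ with $\alpha$ supported in a single $\Gm$-factor, where the paper simply asserts the answer $\lambda^{\delta_{ij}}$ while you unwind the explicit recipe of Lemma~\ref{pic0D}. The sign/convention bookkeeping you flag at the end is real but harmless (the paper's stated coordinate conventions leave the same freedom), so no further action is needed.
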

\begin{proof} 
It's enough to 
compute $\psi(1,\dots,1,\lambda,1,\dots,1)(\cO_D(q))$ for $q \in D_j^o$,
where $\lambda$ is in the $i^{th}$ place. Clearly this is
$\lambda^{\delta_{ij}}$, as required. \end{proof}

\begin{proposition} \label{basicseq}
There is a long exact sequence
\begin{align*}
1 \to \ker[\Aut(Y,D) \to \Aut(\Pic(Y))]&
\to \Aut^0(D)\\ \overset \psi \to 
&\Hom(\Pic(Y),\Pic^0(D))
\to \Hom(D^{\perp},\Pic^0(D)) \to 1
\end{align*}
where $\psi$ is the map of Lemma \ref{funny} and the
other maps are the canonical restrictions. 
\end{proposition}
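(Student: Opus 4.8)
The plan is to split the assertion into a purely homological part, which yields all of the sequence except the description of the left-hand term, and a geometric part identifying $\ker[\Aut(Y,D)\to\Aut(\Pic(Y))]$, which I would reduce to the case of a toric model. For the homological part I would start from the four-term exact sequence of finitely generated abelian groups
\[
0 \to D^\perp \to \Pic(Y) \to \ZZ^n \to N' \to 0,
\]
where the middle map is $L\mapsto (L\cdot D_i)_i$, the kernel $D^\perp$ is as in the definition, and $N':=\coker[\Pic(Y)\to\ZZ^n]$. Since $\Pic^0(D)\cong\bG_m$ is a divisible abelian group, hence an injective $\ZZ$-module, applying $\Hom(-,\Pic^0(D))$ preserves exactness. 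Using Lemma~\ref{pic0D} to identify $\Hom(\ZZ^n,\Pic^0(D))$ with $\Aut^0(D)$, and Lemma~\ref{funny} to identify the homomorphism out of it with $\psi$ (the next map being the canonical restriction to $D^\perp$), this already gives exactness at $\Hom(\Pic(Y),\Pic^0(D))$ and at $\Hom(D^\perp,\Pic^0(D))$, together with a canonical identification $\ker\psi=\Hom(N',\bG_m)$. What remains is to show that $g\mapsto g|_D$ is a well-defined injection $\ker[\Aut(Y,D)\to\Aut(\Pic(Y))]\hookrightarrow\Aut^0(D)$ with image exactly $\ker\psi$. Well-definedness and the inclusion "image $\subseteq\ker\psi$" are immediate: such a $g$ fixes each $D_i$ and each node of $D$, so $g|_D\in\bG_m^n=\Aut^0(D)$, and by Lemma~\ref{funny} one has $\psi(g|_D)(L)=(L|_D)^{-1}\otimes(g^*L)|_D$, which is trivial because $g^*L\cong L$.

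For injectivity and the reverse inclusion I would pass to a toric model: by Lemma~\ref{tmexists}, and since $\Aut(Y,D)$, $\Pic(Y)$ and $\psi$ are unchanged under toric blowup, one may assume there is a toric model $\pi\colon(Y,D)\to(\oY,\oD)$. Here $D$ is the strict transform of $\oD$ and remains anticanonical, which forces every blown-up point, including all infinitely near ones, to lie on the strict transform of $\oD$; in particular $\pi|_D\colon D\to\oD$ is an isomorphism respecting the cyclic orders, so $\Aut^0(D)=\Aut^0(\oD)$, and the $\pi$-fibre over each center is the corresponding exceptional chain. I would first settle the toric pair $(\oY,\oD)$: an automorphism acting trivially on $\Pic(\oY)$ restricts on $\oY\setminus\oD=\bG_m^2$ to an automorphism preserving each boundary divisor, hence with trivial monomial part, hence a translation, so $\ker[\Aut(\oY,\oD)\to\Aut(\Pic(\oY))]=T_{\oY}$. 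This torus injects into $\Aut^0(\oD)$ (a translation fixing two distinct components of $\oD$ pointwise is the identity), with image inside $\ker\psi_{\oY}$ by the previous paragraph; since $\ker\psi_{\oY}=\Hom(N'_{\oY},\bG_m)$ is a connected $2$-dimensional subtorus of $\bG_m^n$ (one checks $\oD^\perp=0$ and $N'_{\oY}\cong N$ via the toric Picard sequence), the inclusion $T_{\oY}\hookrightarrow\ker\psi_{\oY}$ of connected $2$-tori is forced to be an equality, which proves the proposition for $(\oY,\oD)$.

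Back on $(Y,D)$: for injectivity, if $g$ is trivial on $\Pic(Y)$ and on $D$ then $g$ fixes the class, hence the curve (Definition~\ref{excurves}), of the full exceptional chain over each center, so $g$ preserves every $\pi$-fibre and descends to $\bar g\in\Aut(\oY,\oD)$ with $\bar g^*=\id$ and $\bar g|_{\oD}=\id$; the toric case gives $\bar g=\id$, so $g$ is an automorphism of $Y$ over $\oY$, hence the identity on $\pi^{-1}(\bG_m^2)=\bG_m^2$ and therefore everywhere. For the reverse inclusion, given $\vec\lambda\in\ker\psi$, restricting $\psi$ to $\pi^*\Pic(\oY)$ shows $\vec\lambda\in\ker\psi_{\oY}$, so by the toric case $\vec\lambda$ is the restriction to $\oD$ of a unique $t\in T_{\oY}$; pairing $\psi(\vec\lambda)$ with the class of the terminal $(-1)$-curve of the exceptional chain over a center on $\oD_i$ forces $\lambda_i=1$, i.e. $t$ fixes $\oD_i$ pointwise. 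Then $t$ lifts through the whole chain of (possibly infinitely near) blowups over $\oD_i$, since at each stage the next center lies on the strict transform of $\oD_i$, which $t$ fixes pointwise; the resulting lift $g\in\Aut(Y,D)$ is trivial on $\Pic(Y)$ (it fixes $\pi^*\Pic(\oY)$ and every exceptional class) and restricts to $\vec\lambda$ on $D$.

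I expect the main obstacle to be this last step, the construction of an automorphism realizing a prescribed $\vec\lambda\in\ker\psi$: it is the only place where one must actually produce automorphisms, and it relies on the easily-missed fact that in a toric model of a Looijenga pair all infinitely near blowup centers lie on the strict transform of the boundary, which is precisely what allows the relevant one-parameter subgroups of $T_{\oY}$ to lift to $Y$. (When $n\le 2$ one first reduces to $n\ge 3$ by a toric blowup so that the statements above about $\oD$ apply.)
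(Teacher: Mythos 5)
Your proposal is correct and follows essentially the same route as the paper: reduce to a toric model, apply $\Hom(-,\Pic^0(D))$ to the four-term sequence $0\to D^\perp\to\Pic(Y)\to\ZZ^n\to N'\to 0$ using that $\bG_m$ is an injective $\ZZ$-module, and identify $\ker[\Aut(Y,D)\to\Aut(\Pic(Y))]$ with the subtorus of $T_{\oY}$ fixing pointwise the boundary components that contain blow-up centers. The paper packages the last identification into a single commutative diagram and dismisses the matching with $\ker\psi$ as ``easy to see,'' whereas you supply the lifting/descent argument explicitly; the only blemish is the throwaway claim that $\Pic(Y)$ and $\psi$ are ``unchanged'' under toric blowup (they are not --- the correct, and still easy, statement is that the sequence for the blowup differs from the original by a split exact $\bG_m$-factor, so the result for $(Y',D')$ implies it for $(Y,D)$).
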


\begin{proof} It is easy to see that if 
$(Y',D') \to (Y,D)$ is a toric blowup, then the result
for $(Y',D')$ implies the result for $(Y,D)$, so 
by Lemma \ref{tmexists} we can assume $(Y,D)$ has
a toric model $\pi:Y \to \oY$. 

We have the following commutative diagram of exact sequences:
\[
\xymatrix@C=30pt
{
&&0\ar[d]&0\ar[d]&&\\
&0\ar[r]\ar[d]&\Pic(\oY)\ar[r]\ar[d]^{\pi^*}&\bigoplus_i\ZZ\oD_i\ar[r]\ar[d]^=&N
\ar[r]\ar[d]&0\\
0\ar[r]&D^{\perp}\ar[r]\ar[d]&\Pic(Y)\ar[r]\ar[d]&
\bigoplus_i\ZZ D_i\ar[r]\ar[d]&N'\ar[r]\ar[d]&0\\
0\ar[r]&\bigoplus\ZZ E_{ij}\ar[r]^=&\bigoplus\ZZ E_{ij}\ar[r]\ar[d]&0\ar[r]
&0&\\
&&0&&&
}
\]
Here $N$ is dual to the character lattice, $M$, of the structure 
torus of $\oY$. The first row is the standard description of $A_1(\oY)$,
identified with $\Pic(\oY)$ by Poincar\'e duality, with the map from
$\Pic(\oY)$ given by $C\mapsto \sum_i (C\cdot \oD_i)\oD_i$. The map
to $N$ takes $\oD_i$ 
to the first lattice point $v_i$ along the ray of
the fan corresponding to $\oD_i$. This exact sequence is the dual of
the standard exact sequence describing $\Pic(\oY)$,
see e.g., \cite{Fu93}, \S 3.4.
The $E_{ij}$'s are the exceptional curves of $\pi$.
The map $\Pic(Y)\rightarrow\bigoplus_i \ZZ D_i$ is similarly given
by $C\mapsto \sum_i (C\cdot D_i) D_i$.

The kernel of $N\rightarrow N'$
is easily seen to be
the subgroup $S \subset N$ generated by the
rays in the fan for $\oY$ corresponding to boundary
divisors $\oD_i$ along which $\pi$ is not
an isomorphism. 

Note that
$N = \Hom(N,\bigwedge^2 N)$ via $n\mapsto (n'\mapsto n'\wedge n)$
and the orientation gives a trivialization
$\bigwedge^2 N = \bZ$, thus an identification $N = M$. Thus
$$
\Hom(N/S,\bG_m) \subset \Hom(N,\bG_m) =\Hom(M,\Gm)
$$
is the subgroup of homomorphisms to $\bG_m$ whose restriction to
$S$ is trivial. Equivalently, these are the automorphisms in 
$\Aut(\oY,\oD) = \Hom(M,\bG_m)$ fixing pointwise
those $\oD_i$ along which $\pi$ is not an isomorphism.
It's easy to see this is identified with 
\[
\ker\big(\Aut(Y,D) \to \Aut(\Pic(Y))\big).
\]
The result follows by applying $\Hom(\cdot,\Pic^0(D))$ to the
row of the above commutative diagram describing $\Pic(Y)$.
The fact that the middle map coincides with $\psi$ then follows 
from Lemma~\ref{funny}.
\end{proof}

We next show that for a toric Looijenga pair, any possible marked period point
can be realised by a particular choice of marking of $D$.

\begin{lemma} \label{toriccase} Let $(\oY,D=D_1 + \cdots +D_n)$ be a 
toric Looijenga pair, including an identification of the torus $T$ acting on $\oY$ with its open orbit. Let
$\bar\phi \in \Hom(\Pic(\oY),\Pic^0(D))$. Then there
are points $p_i \in D_i^o \subset \oY$ such that for any $L\in\Pic(\oY)$,
$$
\bar\phi(L) = (L|_D)^{-1} \otimes\bigotimes_{i=1}^n \cO_D((L \cdot D_i) p_i).
$$
Moreover, $T$ acts simply transitively on
the possible collections of $p_i$.
\end{lemma}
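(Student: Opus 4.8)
The plan is to reduce the statement to a bijection between two copies of the torus $T$ and use the explicit description of $\Pic^0(D)$ from Lemma~\ref{pic0D}. First I would note that a collection of points $p_i \in D_i^o$ determines, via the toric action, an identification: since each $D_i^o \cong \bG_m$ is a $T$-orbit (the restriction of the $T$-action to $D_i$ factors through the quotient torus $T \to T/T_i$, where $T_i$ is the stabilizer of the generic point of $D_i$), the map $p = (p_i) \mapsto$ (relative positions of the $p_i$) lets us identify the set of such collections with a torsor under $T$, once one normalizes using the toric fixed points. So it suffices to show that the assignment $p \mapsto \bar\phi_p$, where $\bar\phi_p(L) := (L|_D)^{-1} \otimes \bigotimes_i \cO_D((L\cdot D_i)p_i)$, is a bijection from this $T$-torsor onto $\Hom(\Pic(\oY),\Pic^0(D))$, and is $T$-equivariant for the natural $T$-action on the target.

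The key computation is to identify the $T$-action on $\Hom(\Pic(\oY),\Pic^0(D))$ coming from moving the $p_i$ with the action by $\psi$ from Lemma~\ref{funny}. Concretely, translating all $p_i$ simultaneously by $t \in T$ changes $\cO_D(p_i)$ to $\alpha_t^*\cO_D(p_i)$ where $\alpha_t \in \Aut^0(D) = \bG_m^n$ is the image of $t$ under $T \to \Aut^0(D)$; by Lemma~\ref{funny} this multiplies $\bar\phi_p(L)$ by $\psi(\alpha_t)(L) = \prod_i \lambda_i(t)^{L\cdot D_i}$. So the difference $\bar\phi_{t\cdot p} - \bar\phi_p$ in $\Hom(\Pic(\oY),\Pic^0(D))$ is exactly $\psi(\alpha_t)$ restricted through $\Pic(\oY)$. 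Now I would invoke Proposition~\ref{basicseq} (in the toric case, where $D^\perp$ on $\oY$ is $\Pic(\oY) \cap D^\perp$ and the kernel term is trivial since $\pi$ is an isomorphism — actually one argues directly): for the toric pair the sequence shows $\psi \colon \Aut^0(D) = \bG_m^n \to \Hom(\Pic(\oY),\Pic^0(D))$ is surjective with kernel equal to $\ker(\Aut(\oY,D) \to \Aut(\Pic(\oY))) = \Hom(M,\bG_m) = T$; that is, $T \hookrightarrow \bG_m^n \twoheadrightarrow \Hom(\Pic(\oY),\Pic^0(D))$, and the composite $T \to \Hom(\Pic(\oY),\Pic^0(D))$ is zero. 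Wait — that last statement would say $\bar\phi_{t\cdot p} = \bar\phi_p$, which is backwards; the point is rather that the map $\bar\phi_\bullet$ descends from the $\bG_m^n$-torsor of markings, and $T \subset \bG_m^n$ is precisely the subgroup acting trivially on $\Pic(\oY)$-valued periods, hence translating the $p_i$ by the complementary directions moves $\bar\phi_p$ through all of $\Hom(\Pic(\oY),\Pic^0(D))$ bijectively.

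So the cleanest route is: a choice of the $p_i$ is the same as a marking of $D$ compatible with the toric structure (Definition~\ref{markingdef}(1)), hence a $\bG_m^n$-torsor; the period assignment $\bar\phi_\bullet$ on this torsor has "derivative" $\psi$ of Lemma~\ref{funny}; by the exactness in Proposition~\ref{basicseq} applied to $(\oY,D)$, the map $\psi \colon \bG_m^n \to \Hom(\Pic(\oY),\Pic^0(D))$ is surjective with kernel the subtorus $T = \Hom(M,\bG_m)$ of toric automorphisms; therefore $\bar\phi_\bullet$ is surjective (any target value is hit) and its fibres are exactly the $T$-orbits, giving existence and the simple transitivity of the $T$-action at once. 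The main obstacle I anticipate is bookkeeping the two appearances of a rank-$n$ torus — $\bG_m^n = \Aut^0(D)$ versus $T = \Hom(M,\bG_m)$ acting on $\oY$ — and verifying that the inclusion $T \hookrightarrow \Aut^0(D)$ induced by the $T$-action on $D$ matches the inclusion $\Hom(M,\bG_m) = \Hom(N/S,\bG_m)|_{S=0} \hookrightarrow \bigoplus_i \bG_m$ appearing in the proof of Proposition~\ref{basicseq}; once that identification is pinned down (it comes from $v_i \in N$, the primitive generator of the ray for $\oD_i$, giving the character by which $T$ acts on the fibre of $\cO(\oD_i)$ along $D_i$, i.e.\ on the tangent direction of $D_i$ at the relevant node), the rest is formal. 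One should also handle $n = 1, 2$ separately, as in Lemma~\ref{pic0D}, but the argument is identical after the obvious modifications.
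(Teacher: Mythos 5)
Your proposal is correct and follows essentially the same route as the paper: fix an arbitrary collection $p_i$, observe via Lemma~\ref{funny} that translating the $p_i$ by $\alpha\in\Aut^0(D)$ twists the resulting homomorphism by $\psi(\alpha)$, and then use the exact sequence of Proposition~\ref{basicseq} specialized to the toric case, $1\to T\to\Aut^0(D)\xrightarrow{\psi}\Hom(\Pic(\oY),\Pic^0(D))\to 1$, to get existence from surjectivity of $\psi$ and simple transitivity from $\ker\psi=T$. Your mid-stream worry that $\bar\phi_{t\cdot p}=\bar\phi_p$ for $t\in T$ is ``backwards'' is unfounded — that identity is exactly what makes the fibres of $p\mapsto\bar\phi_p$ into $T$-orbits — and you resolve it correctly yourself.
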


\begin{proof} Start with an arbitrary choice of $p_i \in D_i^o$.
The exact sequence of Proposition~\ref{basicseq}
reduces to 
\[
1 \mapright{} T \mapright{} \Aut^0(D) \mapright{\psi} 
\Hom(\Pic(\oY),\Pic^0(D)) \mapright{} 1.
\]
Denote the map $L\mapsto (L|_D)^{-1}\otimes\bigotimes_{i=1}^n\shO_D((L\cdot D_i)
p_i)$ by $\bar\phi'\in\Hom(\Pic(\oY),\Pic^0(D))$. Given any $\alpha
\in\Aut^0(D)$, using Lemma \ref{funny}, consider the map
\begin{align*}
L \mapsto {} & (L|_D)^{-1}\otimes\bigotimes_{i=1}^n\shO_D\big((L\cdot D_i)
\alpha^{-1}(p_i)\big)\\
= {} & \bar\phi'(L)\otimes \psi_{\alpha}\left(\bigotimes_{i=1}^n
\shO_D\big((L\cdot D_i)p_i\big)\right)\\
= {} & \bar\phi'(L)\otimes\psi(\alpha)(L).
\end{align*}
So this map coincides with $\bar\phi'\otimes\psi(\alpha)$.
Thus by replacing $p_i$ with $\alpha^{-1}(p_i)$ for some suitable choice of
$\alpha$, we obtain $\bar\phi=\bar\phi'$. Furthermore,
the possible choices of $p_i$ are a torsor for the kernel of $\psi$.
\end{proof}

\begin{lemma} \label{toriccasefamilies} Let $(\oY,D)$ be as in Lemma \ref{toriccase}. 
The structure of $\oY$ as a toric variety together with the orientation of $D$ gives a canonical identification
of $D_i^o$ with $\Gm$. Let $m_i\in D_i^o$ correspond to $-1\in\Gm$
under this identification. Define 
\[
p_i: \Aut^0(D) \to D_i^o, \quad \alpha \mapsto \alpha^{-1}(m_i).
\]
\begin{enumerate}
\item 
\[
\psi(\alpha)(L) = (L|_D)^{-1} \otimes \bigotimes_{i=1}^n \cO_D\big(
(L \cdot D_i) 
p_i(\alpha)\big) 
\in \Pic^0(D)
\]
for all $\alpha \in \Aut^0(D)$ ($\psi$ as in Lemma~\ref{funny}). 
\item Noting $\psi$ is surjective,
let $\gamma: \Hom(\Pic(\oY),\Pic^0(D)) \to \Aut^0(D)$ be a section of $\psi$. Let 
$\op_i: \Hom(\Pic(\oY),\Pic^0(D))  \to D_i^o$ be the composition $p_i \circ \gamma$. 
Then for each $\bar\phi\in \Hom(\Pic(\oY),\Pic^0(D))$, the points
$\bar p_i(\bar\phi)$ satisfy the conclusion of Lemma~\ref{toriccase} for
$\bar\phi$.
\end{enumerate}
\end{lemma}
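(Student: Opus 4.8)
\textbf{Proof proposal for Lemma \ref{toriccasefamilies}.}

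The plan is to prove (1) by a direct computation using the earlier formula from Lemma~\ref{funny}, and then to deduce (2) formally by combining (1) with Lemma~\ref{toriccase}. For (1), first I would unwind the canonical identification $D_i^o \cong \Gm$ coming from the toric structure and the orientation: the torus $T$ acts on $D_i^o$ simply transitively, and fixing the point $m_i \leftrightarrow -1$ rigidifies this to an identification of $D_i^o$ with $\Gm$ as $T$-torsors. Under this, for $\alpha \in \Aut^0(D)$ with $\alpha \leftrightarrow (\lambda_1,\dots,\lambda_n) \in \Gm^n$ via Lemma~\ref{pic0D}(2), I would check that the $i^{th}$ component of $\alpha$ acting on $D_i^o$ is exactly the translation sending $m_i$ to the point corresponding to $-\lambda_i$, so that $p_i(\alpha) = \alpha^{-1}(m_i)$ corresponds to $-\lambda_i^{-1} \in \Gm$. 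The point here is that the $T$-action in Lemma~\ref{pic0D}(2) was normalized using the same homogeneous coordinates that define the toric structure on $\oY$, so the two identifications are compatible by construction.

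Next, I would compute the right-hand side of (1). Writing $d_i := L\cdot D_i = \deg L|_{D_i}$, the line bundle $(L|_D)^{-1}\otimes\bigotimes_i \cO_D(d_i\, p_i(\alpha))$ lies in $\Pic^0(D) = \Gm$ by construction, since it has degree $0$ on every component. To evaluate it in $\Gm$ I would use the explicit isomorphism $\lambda \colon \Pic^0(D) \to \Gm$ from the proof of Lemma~\ref{pic0D}(1): choose nowhere-vanishing sections on each component and take the product of the ratios at the nodes. Comparing with the formula $\psi(\lambda_1,\dots,\lambda_n)(L) = \prod_i \lambda_i^{d_i}$ from Lemma~\ref{funny}, the claim $\psi(\alpha)(L) = (L|_D)^{-1}\otimes\bigotimes_i \cO_D(d_i\, p_i(\alpha))$ reduces to the identity that moving a degree-$d_i$ divisor on $D_i^o$ from the ``origin'' configuration to $p_i(\alpha) \leftrightarrow -\lambda_i^{-1}$ multiplies its class in $\Pic^0(D)=\Gm$ by $\lambda_i^{d_i}$ (the sign ambiguity is absorbed into the choice of $m_i \leftrightarrow -1$, which is precisely why that normalization was made). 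This is a one-component bookkeeping check: translating $\cO_{D_i}(d_i\, q)$ by the automorphism scales the relevant section ratio at the two nodes $p_{i-1,i}$ and $p_{i,i+1}$ bordering $D_i$, contributing the factor $\lambda_i^{d_i}$ to the product defining $\lambda$. I expect the only real subtlety is keeping the signs and inverses straight in this normalization; everything else is a matching of two explicitly-given formulas.

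Finally, part (2) is essentially formal given (1). Since $T = \ker\psi$ (the exact sequence of Proposition~\ref{basicseq} degenerates in the toric case as noted in the proof of Lemma~\ref{toriccase}), the map $\psi \colon \Aut^0(D)\to \Hom(\Pic(\oY),\Pic^0(D))$ is surjective and admits a set-theoretic section $\gamma$. For $\bar\phi \in \Hom(\Pic(\oY),\Pic^0(D))$, apply (1) with $\alpha = \gamma(\bar\phi)$: then $\psi(\gamma(\bar\phi)) = \bar\phi$, so
\[
\bar\phi(L) = (L|_D)^{-1}\otimes\bigotimes_{i=1}^n \cO_D\big((L\cdot D_i)\, p_i(\gamma(\bar\phi))\big) = (L|_D)^{-1}\otimes\bigotimes_{i=1}^n \cO_D\big((L\cdot D_i)\, \op_i(\bar\phi)\big),
\]
which is exactly the conclusion of Lemma~\ref{toriccase} for $\bar\phi$. (The independence of the conclusion from the particular choice of section $\gamma$ follows because two sections differ by a map into $\ker\psi = T$, and by Lemma~\ref{toriccase} the resulting $p_i$ differ by the $T$-action, which does not affect which $\bar\phi$ they realize — indeed it realizes the same $\bar\phi$.) This completes the proof.
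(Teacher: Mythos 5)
Your part (2) is fine and matches the paper: apply (1) with $\alpha=\gamma(\bar\phi)$ and use $\psi(\gamma(\bar\phi))=\bar\phi$. The problem is in part (1), where your proposal has a genuine gap. Since $\psi(\alpha)(L)=(L|_D)^{-1}\otimes\alpha^*(L|_D)$ and $\bigotimes_i\cO_D\bigl((L\cdot D_i)\alpha^{-1}(m_i)\bigr)=\alpha^*\bigl(\bigotimes_i\cO_D((L\cdot D_i)m_i)\bigr)$, the assertion of (1) is \emph{equivalent} to the single identity
\[
L|_D \;\cong\; \bigotimes_{i=1}^n \cO_D\bigl((L\cdot D_i)\,m_i\bigr)
\]
for every $L\in\Pic(\oY)$ (this is just (1) for $\alpha=\mathrm{id}$). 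Your argument never proves this. When you say the claim ``reduces to the identity that moving a degree-$d_i$ divisor from the origin configuration to $p_i(\alpha)$ multiplies its class by $\lambda_i^{d_i}$,'' the phrase ``origin configuration'' is silently identifying $L|_D$ with the divisor class $\sum_i d_i m_i$ concentrated at the points $m_i$ --- which is precisely the statement to be proved. The translation step itself is essentially Lemma~\ref{funny} and is harmless; the content of the lemma lies entirely in the displayed identity, and it is not sign bookkeeping. Concretely: for $\oY=\bP^2$ with $L=\cO(1)$ and $D$ the triangle of coordinate lines, the three points at position $-1$ on the three lines are collinear (so $\cO(1)|_D\cong\cO_D(m_1+m_2+m_3)$), while the three points at position $+1$ are not. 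So the normalization $m_i=-1$ is forced by toric geometry, not absorbed as a sign convention.

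The paper's proof of (1) consists exactly of establishing this identity: one reduces to $L$ ample (both sides are multiplicative in $L$ and ample classes generate), realizes $(\oY,L)$ by a lattice polygon, and writes down the global section given by the sum of all boundary monomials with coefficients chosen so that its restriction to each $D_i\cong\bP^1$ is $(x+y)^{L\cdot D_i}$; its zero scheme on $D$ is exactly $\sum_i(L\cdot D_i)m_i$. To repair your proof you need to insert this (or an equivalent computation of $L|_D$ via the explicit $\lambda$-invariant of Lemma~\ref{pic0D}(1), which again requires using the toric transition functions of $L$, i.e.\ genuinely engaging with the toric structure rather than only with the abstract cycle $D$).
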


\begin{proof} 
(1) amounts to showing that 
$$
L|_D = \bigotimes_{i=1}^n \cO_D((L \cdot D_i) m_i).
$$
It's enough to do this for an ample line bundle, so we can assume 
$(\oY,L)$ is the polarized toric surface given by a lattice polygon. In that
case take the section of $L$ given by a sum of monomials corresponding
to all lattice points on the boundary, with coefficients chosen so that
the restriction of the section to $D_i\cong\PP^1$ takes the form $(x+y)^{L\cdot
D_i}$. Its zero scheme is exactly $\sum (L \cdot D_i) m_i$.

For (2), note
\[
\bar\phi(L)=\psi(\gamma(\bar\phi))(L)=(L|_D)^{-1}\otimes
\bigotimes_{i=1}^n\shO_D\big((L\cdot D_i)
\bar p_i(\bar\phi)\big)
\]
by (1), as desired.
\end{proof}

The following contains most of the ideas needed for global Torelli,
showing that the marked period point
determines a marked Looijenga pair.

\begin{proposition} \label{closetotorelli} 
Let $(Y,D)$ be a Looijenga pair
and $\meij \subset \Pic(Y)$ the classes of exceptional
curves for a toric model of type $(\oY,D)$. 
Let $\phi \in \Hom(\Pic(Y),\Pic^0(D))$. 
\begin{enumerate}
\item
There is an inclusion $\Pic(\oY) \subset \Pic(Y)$
given by  pullback. Let $\bar\phi: \Pic(\oY) \to \Pic^0(D)$ be the restriction 
$\phi|_{\Pic(\oY)}$. Let $p_i \in D_i^o \subset \oY$ be given
by $\bar\phi$ from Lemma \ref{toriccase}. There are unique points
$q_{ij} \in D_i^o \subset \oY$ such that
$$
\phi(E_{ij}) = \cO_D(q_{ij})^{-1} \otimes \cO_D(p_i).
$$
Let $(Z,D)$ be the iterated blowup along the collection of points (possibly with
repetitions)
$q_{ij} \subset D_i^o \subset \oY$. There is a unique
isomorphism $\mu: \Pic(Y) \to \Pic(Z)$ preserving
boundary classes, and sending $E_{ij}$ to the
class of the corresponding exceptional curve. 
Under this identification, $\phi$ is the marked
period point of $((Z,D),p_i,\mu)$, as defined in \eqref{basiceq}.
\item
Suppose there
is a marking $r_i \in D_i^o \subset Y$ so that
$\phi$ is the marked period point for $((Y,D),r_i)$. 
Then $\mu$ is induced by
a unique isomorphism of Looijenga pairs between $(Y,D)$ and $(Z,D)$ 
which sends $r_i$ to $p_i$. 
\end{enumerate}
\end{proposition}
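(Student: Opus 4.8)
\emph{Proof plan.}
For part (1) the plan is to produce, in order, the points $q_{ij}$, the isomorphism $\mu$, and finally the period identity. To get $q_{ij}$: for $q\in D_i^o$ the bundle $\cO_D(q)\otimes\cO_D(p_i)^{-1}$ has multidegree $0$, hence lies in $\Pic^0(D)=\Gm$, and the morphism $D_i^o\to\Pic^0(D)$ so defined is an isomorphism (injectivity by restriction to $D_i$; both sides are a single copy of $\Gm$ --- this is really the construction of Lemma~\ref{pic0D} read off the component $D_i$), so $q_{ij}$ exists and is unique. I then let $(Z,D)$ be the iterated blowup of $(\oY,D)$ along the $q_{ij}$, with combinatorics dictated by the partial order on $\{E_{ij}\}$, and write $\pi_Z\colon Z\to\oY$, $E'_{ij}\subset Z$. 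Throughout I identify $D\subset Y$, $D\subset Z$ and $\oD\subset\oY$ via the toric models, which restrict to orientation-preserving isomorphisms on the anticanonical cycle.

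For $\mu$, I would use the standard facts that $\Pic(Y)=\pi_Y^*\Pic(\oY)\oplus\bigoplus\ZZ E_{ij}$ (and likewise for $Z$), that the second summand has Gram matrix $-\mathrm{Id}$ in the basis $\{E_{ij}\}$ and is therefore the orthogonal complement of $\pi_Y^*\Pic(\oY)$, that $[D_i]_Y=\pi_Y^*[\oD_i]-\sum_j E_{ij}$, and that $E_{ij}\cdot[D_k]_Y=\delta_{ik}$. These force any boundary-preserving isometry with $E_{ij}\mapsto E'_{ij}$ to act by $\pi_Y^*\bar L\mapsto\pi_Z^*\bar L$ (since the $[\oD_i]$ generate $\Pic(\oY)$), which gives uniqueness; the same prescription read as a definition gives existence. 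To verify that $\phi$ equals the marked period point of $((Z,D),p_i,\mu)$, since both are homomorphisms $\Pic(Y)\to\Pic^0(D)$ it suffices to test on $\pi_Y^*\Pic(\oY)\cup\{E_{ij}\}$. On $\pi_Y^*\bar L$ the required identity collapses, after the identifications above, to the defining property of the $p_i$ in Lemma~\ref{toriccase}; on $E_{ij}$ it collapses --- using $\mu(E_{ij})|_D=E'_{ij}|_D=\cO_D(q_{ij})$ (the curve $E'_{ij}$ meets $D$ over $q_{ij}$) and $\sum_k(E_{ij}\cdot D_k)p_k=p_i$ --- to the defining property of $q_{ij}$.

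For part (2), put $s_i:=\pi_Y(r_i)\in\oD_i^o$ and let $\hat q_{ij}\in\oD_i^o$ be the points blown up by $\pi_Y$. Restricting the hypothesis ``$\phi$ is the marked period point of $((Y,D),r_i)$'' to $\pi_Y^*\Pic(\oY)$ and transporting to $\oY$ shows that $(s_i)$ solves the same period equation as $(p_i)$, so Lemma~\ref{toriccase} (the ``simply transitive'' clause) provides a unique $t\in T=\Aut(\oY,\oD)$ with $t\cdot(p_i)=(s_i)$. Evaluating the hypothesis at $E_{ij}$ --- using $E_{ij}\cdot D_k=\delta_{ik}$ and $E_{ij}|_D\mapsto\hat q_{ij}$, $r_i\mapsto s_i$ under $\pi_Y|_D$ --- gives $\phi(E_{ij})=\cO_{\oD}(\hat q_{ij})^{-1}\otimes\cO_{\oD}(s_i)$, and comparing with $\phi(E_{ij})=\cO_{\oD}(q_{ij})^{-1}\otimes\cO_{\oD}(p_i)$ from part (1), together with the fact that $T$ acts on each $\oD_i^o\cong\Gm$ through a character (so $\cO_{\oD}(t(x)-x)$ does not depend on $x\in\oD_i^o$), yields $t(q_{ij})=\hat q_{ij}$ for all $i,j$. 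Hence $Y$ and $Z$ are iterated blowups of $(\oY,D)$ along the $t$-related configurations $\{\hat q_{ij}\}$, $\{q_{ij}\}$ of the same combinatorial type, so $t$ lifts to an isomorphism $g\colon Z\to Y$ with $\pi_Y g=t\pi_Z$ sending boundary to boundary and $E'_{ij}$ to $E_{ij}$. I take $f:=g^{-1}\colon(Y,D)\to(Z,D)$: because $t$ acts trivially on $\Pic(\oY)$ one checks $f_*=\mu$ on the generators, and $\pi_Z(f(r_i))=t^{-1}(s_i)=p_i$ identifies $f(r_i)$ with the marking $p_i$. Uniqueness of $f$ is immediate from Proposition~\ref{basicseq}: the kernel of $\Aut(Z,D)\to\Aut(\Pic(Z))$ lies in $\Aut^0(D)$, which acts freely on markings of $D$.

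The hard part is the reconciliation in part (2): one has two a priori unrelated descriptions of the $q_{ij}$ --- the abstract one built from $\phi$ in part (1), and the one forced by the assumed realization $(Y,D,r_i)$ --- and one must see that they differ only by the single global torus element $t$. This is the one place where the toric structure of $(\oY,D)$ enters essentially, via the fact that $T$ scales each $\oD_i^o\cong\Gm$ and hence the ``period difference'' $\cO_{\oD}(t(x)-x)$ is independent of $x$ (equivalently, via the behaviour of $\psi$ from Lemma~\ref{funny} on the image of $T$ in $\Aut^0(D)$). A secondary, purely bookkeeping, nuisance throughout both parts is keeping the identifications $D\cong\oD$ coming from $\pi_Y$, from $\pi_Z$, and from the markings mutually consistent.
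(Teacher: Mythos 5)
Your argument is correct and follows essentially the same route as the paper: part (1) is the direct construction, and part (2) uses the simply-transitive torus action from Lemma~\ref{toriccase} to match the markings, then the uniqueness of the $q_{ij}$ to identify the blown-up configurations, with uniqueness of $f$ coming from Proposition~\ref{basicseq}. The only cosmetic difference is that the paper absorbs your torus element $t$ into a recomposition of the toric model $\pi$ (so that $\pi(r_i)=p_i$ directly), whereas you track $t$ explicitly and use the character action of $T$ on each $\oD_i^o$ to transport the $q_{ij}$; these are the same computation.
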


\begin{proof} 
(1) is immediate from the
construction. So we assume we have the marking $r_i \in D_i^o$
as in (2). By assumption there is a birational
map $\pi: Y \to \oY$ with exceptional curves $\meij$, and 
$\pi^*: \Pic(\oY) \to \Pic(Y)$ is the inclusion of
(1). Now by definition of the marked period point,
the points $\pi(r_i)$ satisfy the conclusions of Lemma \ref{toriccase}
for $\bar\phi$. Thus by
the uniqueness statement in that lemma, we can change
$\pi$ (composing by a translation in the structure torus of $\oY$) 
and assume $\pi(r_i) = p_i$. The points 
$\pi(E_{ij} \cap D_i)$ satisfy the conditions on the $q_{ij}$, 
so by uniqueness $\pi(E_{ij} \cap D_i) = q_{ij}$. 
Thus $\pi$ is exactly
the same iterated blowup as $Z$, and so clearly 
$(Y,D)$ and $(Z,D)$, together with the markings of
their boundaries, are isomorphic, by an isomorphism
inducing $\mu$. This isomorphism is unique by 
Proposition \ref{basicseq}.
\end{proof} 

\begin{corollary} \label{torcor} Let $(Y,D),(Y',D)$ be Looijenga
pairs (resp.\ pairs with marked boundary), 
having toric models of the same combinatorial type.
Let $\phi,\phi'$ be the period points (resp.\ the marked
period points).
Then there is a unique isomorphism of lattices
$\mu: \Pic(Y) \to \Pic(Y')$ preserving the boundary classes
and the exceptional curves for the toric models. 
The isomorphism $\mu$ is induced by an isomorphism $f$ of Looijenga
pairs (resp. pairs with marked boundary) 
iff $\phi' \circ \mu = \phi$, and in that case the
possible $f$ form a torsor for 
\[
\ker\big(\Aut(Y,D) \to \Aut(\Pic(Y))\big)
\]
(resp.\ $f$ is unique). 
\end{corollary}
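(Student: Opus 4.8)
The plan is to deduce the corollary from Proposition~\ref{closetotorelli}, handling first the case of pairs with marked boundary, where the markings make everything rigid, and then reducing the unmarked case to that one. As in the proof of Proposition~\ref{basicseq}, one reduces at once to the situation where $(Y,D)$ and $(Y',D)$ themselves carry honest toric models $\pi\colon Y\to\oY$ and $\pi'\colon Y'\to\oY'$: pass to toric blowups admitting honest toric models (Lemma~\ref{tmexists}), arranged to agree in the number of blown-up nodes; under pullback $D^{\perp}$, the period points, and $\ker(\Aut(Y,D)\to\Aut(\Pic(Y)))$ are all unaffected; extend $\mu$ over the exceptional classes of the blowups; and at the end descend the resulting isomorphism, which preserves those classes. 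Since the two toric models have the same combinatorial type (so the boundary self-intersections, equal to $D_i^2$ plus the number of points blown up over $D_i$, agree), Remark~\ref{toric_iso_type} gives an isomorphism $(\oY,D)\cong(\oY',D)$ of toric pairs; fix one.

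I would then construct $\mu$ and check its uniqueness. One has $\Pic(Y)=\pi^*\Pic(\oY)\oplus\bigoplus_{ij}\ZZ E_{ij}$, and likewise for $Y'$; take $\mu$ to be the isometry identifying $\pi^*\Pic(\oY)$ with $(\pi')^*\Pic(\oY')$ via the chosen isomorphism $\oY\cong\oY'$ and sending each $E_{ij}$ to $E'_{ij}$, compatibly with the orderings of the two configurations. This is a lattice isometry, carries exceptional curve classes to exceptional curve classes by construction, and preserves the boundary classes because $[D_i]$ is the same expression in $\pi^*[\oD_i]$ and the $E_{ij}$ on both sides. Uniqueness follows since $\Pic(\oY)$ is generated by the $[\oD_i]$ ($\oY$ being a smooth projective toric surface), hence $\pi^*\Pic(\oY)$ by the $\pi^*[\oD_i]$, so that the $[D_i]$ together with the $E_{ij}$ generate $\Pic(Y)$ and any isomorphism with the listed properties agrees with $\mu$ on this generating set.

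For the equivalence, take first the marked case. Apply Proposition~\ref{closetotorelli} to $\bigl((Y,D,r_i),\,\meij,\,\phi\bigr)$: it produces an iterated blowup $(Z,D)$ of $\oY$ at points $q_{ij}$, a marking $p_i$ of $D$, and — since $\phi$ is the marked period point of $((Y,D),r_i)$ — a unique isomorphism $h_Y\colon(Y,D)\to(Z,D)$ with $h_Y(r_i)=p_i$ inducing the evident lattice map. Applied to $\bigl((Y',D,r'_i),\,\{\mu(E_{ij})\},\,\phi'\bigr)$, with $\{\mu(E_{ij})\}$ being exactly the exceptional configuration of $\pi'$, it produces $(Z',D)$, points $q'_{ij}$, a marking $p'_i$, and a unique $h_{Y'}\colon(Y',D)\to(Z',D)$. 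Now $\phi'\circ\mu=\phi$ gives, under $\oY\cong\oY'$, both $\phi|_{\pi^*\Pic(\oY)}=\phi'|_{(\pi')^*\Pic(\oY')}$ and $\phi(E_{ij})=\phi'(\mu(E_{ij}))$, whence by the uniqueness in Lemma~\ref{toriccase} and the formula defining the $q_{ij}$ in Proposition~\ref{closetotorelli}(1) one gets $p_i=p'_i$ and $q_{ij}=q'_{ij}$, so that $(Z,D,p_i)$ and $(Z',D,p'_i)$ coincide. Then $f:=h_{Y'}^{-1}\circ h_Y$ is an isomorphism of marked pairs inducing $\mu$, and it is the only one: composing any marked isomorphism inducing $\mu$ with $h_{Y'}$ yields an isomorphism $(Y,D)\to(Z,D)$ satisfying the uniqueness hypotheses of Proposition~\ref{closetotorelli}(2), so equal to $h_Y$. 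Conversely, if $f$ induces $\mu$ and matches the markings, then $\phi'\circ\mu=\phi$ by functoriality of the period point — restriction of line bundles commutes with pullback, and an automorphism of $D$ fixing the components and orientation acts trivially on $\Pic^0(D)$ by Lemma~\ref{funny}.

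For the unmarked case I would reduce to the marked one. Given $\mu$ with $\phi'\circ\mu=\phi$ on $D^{\perp}$, pick any marking $r_i$ on $Y$, let $\tilde\phi\in\Hom(\Pic(Y),\Pic^0(D))$ be the corresponding marked period point, and observe that $\tilde\phi\circ\mu^{-1}$ agrees on $(D')^{\perp}$ with $\phi'$ (up to the harmless inversion distinguishing \eqref{basiceq} from \eqref{unmarkedperiod}); by the exact sequence of Proposition~\ref{basicseq} — the image of $\psi$ being the kernel of restriction to $(D')^{\perp}$ — the marked period points of $(Y',D)$ are exactly the homomorphisms with this restriction, so $\tilde\phi\circ\mu^{-1}$ is the marked period point of $(Y',D,r'_i)$ for some marking $r'_i$, and the marked case supplies $f$. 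The converse is functoriality once more, and if $f_0$ is one isomorphism inducing $\mu$, the others are precisely $f_0\circ g$ for $g\in\ker(\Aut(Y,D)\to\Aut(\Pic(Y)))$, which is the asserted torsor. I expect the main obstacle to be the bookkeeping joining the two applications of Proposition~\ref{closetotorelli} in the marked case: ensuring the a priori non-unique points $p_i$ of Lemma~\ref{toriccase} can be chosen compatibly on $\oY$ and $\oY'$, and that $q_{ij}=q'_{ij}$ falls out cleanly from $\phi'\circ\mu=\phi$; the rest is formal once the reduction to honest toric models is set up.
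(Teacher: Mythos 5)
Your argument is correct and follows essentially the same route as the paper: the marked case is extracted from Proposition~\ref{closetotorelli} (applied on both sides and glued over the common toric surface), and the unmarked case is reduced to it by using the exact sequence of Proposition~\ref{basicseq} to adjust a marking so that $\phi'\circ\mu=\phi$, with the torsor statement also coming from that sequence. The paper's proof is just a few lines; you have merely filled in the details it leaves implicit (existence and uniqueness of $\mu$ via the decomposition $\Pic(Y)=\pi^*\Pic(\oY)\oplus\bigoplus\ZZ E_{ij}$, the reduction to honest toric models, and the matching of the points $p_i,q_{ij}$), all of which check out.
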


\begin{proof} The marked case is immediate from 
Proposition \ref{closetotorelli}. 
For the unmarked case, write $\bar\phi,\bar\phi'$ for
the period points defined by \eqref{unmarkedperiod}, and assume $\bar\phi'\circ\mu = \bar\phi$. 
Choose arbitrary markings of the boundaries
of $Y,Y'$, with marked period points $\phi,\phi'$. Now by
Proposition \ref{basicseq} we can adjust the marking of the boundary
of $Y$ so $\phi' \circ \mu = \phi$. The final torsor statement is
clear from Proposition \ref{basicseq}.\end{proof}

For a Looijenga pair $(Y,D)$, we define the monodromy group as follows.
For $(\cY,\cD)/S$ an analytic family of Looijenga pairs over a connected base $S$, a base point $s \in S$, an identification $(\cY_s,\cD_s) = (Y,D)$, and a path $\gamma \colon [0,1] \rightarrow S$ with $\gamma(0)=\gamma(1)=s$,
we obtain a \emph{monodromy transformation} $\rho(\gamma) \in \Aut(\Pic(Y))$ by parallel transport along the loop $\gamma$. The \emph{monodromy group} of $(Y,D)$ is the subgroup of $\Aut(\Pic(Y))$ consisting of all monodromy transformations.

\begin{remark}
We show in Theorem~\ref{strong_monodromy} that the full monodromy group is realized by an analytic family over a smooth base.
\end{remark}

\begin{lemma}\label{PL}
Let $(Y,D)$ be a Looijenga pair, $\Phi$ the associated set of roots, and $W$ the Weyl group of $\Phi$. Then $W$ is contained in the monodromy group of $(Y,D)$.
\end{lemma}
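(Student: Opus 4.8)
The plan is to show that each generating reflection $s_\alpha$ of $W$, for $\alpha \in \Phi$, arises as a monodromy transformation, since the monodromy group is a subgroup of $\Aut(\Pic(Y))$ and hence closed under composition. By definition of $\Phi$, a root $\alpha$ is realized by an internal $(-2)$-curve $C$ on some deformation equivalent pair $(Y',D')$; so after parallel transporting along a path we may replace $(Y,D)$ by $(Y',D')$ (the monodromy groups are conjugate under parallel transport, and the conjugating isomorphism carries $s_\alpha$ to the corresponding reflection), and thus assume $\alpha = [C]$ is the class of an actual internal $(-2)$-curve $C \subset Y$ disjoint from $D$.

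\smallskip
First I would set up the local model. An internal $(-2)$-curve $C$ is a smooth rational curve with $C^2 = -2$, disjoint from $D$, hence contained in the open surface $U = Y \setminus D$. Contracting $C$ produces an $A_1$-surface singularity; the point is that $C$ moves in a family where it acquires such a node, and the monodromy of that family is the desired reflection. Concretely, I would produce a one-parameter family $(\cY, \cD)/S$ with $S$ a disk, $\cD$ constant (the deformation keeps $D$ fixed since $C$ is disjoint from $D$ and we deform only in a neighbourhood of $C$), with central fibre $(Y,D)$ containing $C$, such that over the punctured disk the fibres no longer contain a $(-2)$-curve — i.e. the family smooths the node obtained by contracting $C$. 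This is the standard picture: the local model near $C$ is (a neighbourhood of the zero section in) $T^*\PP^1$, which is the minimal resolution of the $A_1$-singularity $\{xy = z^2\}$, and the versal deformation of that singularity is $\{xy = z^2 - t\}$; the Picard-Lefschetz formula for this $A_1$-degeneration gives monodromy $\beta \mapsto \beta + \langle [C], \beta\rangle [C] = s_{[C]}(\beta)$ acting on $H^2$.

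\smallskip
The steps in order: (i) reduce to the case $\alpha = [C]$ with $C$ an internal $(-2)$-curve in $Y$, using that $\Phi$ is invariant under parallel transport and that parallel transport conjugates monodromy groups and intertwines the reflections; (ii) construct the family: choose a small analytic neighbourhood $V$ of $C$ in $Y$ isomorphic to a neighbourhood of the zero section in $T^*\PP^1$, take the Atiyah flop / versal deformation of the $A_1$-singularity obtained by contracting $C$ to get a family over a disk $S$ that agrees with $Y$ away from $V$ and is the smoothing $\{xy = z^2 - t\}$ near $C$, and check that $\cD$ extends as a constant family and each fibre is again a Looijenga pair (the relative anticanonical class restricts to $\cD$, using that the deformation is trivial near $D$); (iii) compute the monodromy of this family by the classical Picard-Lefschetz formula for a node, obtaining exactly $s_{[C]}$; (iv) conclude $s_\alpha$ lies in the monodromy group for every $\alpha \in \Phi$, hence so does the subgroup $W$ they generate.

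\smallskip
The main obstacle I expect is step (ii): verifying that the local smoothing of the $A_1$-singularity can be carried out \emph{globally} on $Y$ while keeping $(\cD_t)$ a valid anticanonical cycle in every fibre — that is, that the deformation of $Y$ supported near $C$ is unobstructed and preserves the pair structure. One needs that the obstruction space (a piece of $H^2$ of the tangent sheaf, or concretely $H^1$ of the normal bundle $\shO_C(-2)$ computations) vanishes, which it does for this rigid-looking but deformable configuration; alternatively one can bypass explicit obstruction theory by citing that such families exist — e.g. the family deforming $Y$ to a generic pair in Proposition~\ref{generics}, restricted to a generic one-parameter subfamily through $(Y,D)$, realizes precisely this monodromy, since the $(-2)$-curve $C$ is destroyed along a general arc and a small loop around the corresponding discriminant component gives the Picard-Lefschetz reflection $s_{[C]}$. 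Either way the heart of the matter is identifying the local monodromy with the reflection, which is exactly Picard-Lefschetz for an ordinary node.
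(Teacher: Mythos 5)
Your proposal is correct and follows essentially the same route as the paper: reduce by parallel transport to the case where $\alpha$ is the class of an honest internal $(-2)$-curve $C$, then realize $s_{[C]}$ as the Picard--Lefschetz monodromy of a family smoothing the $A_1$-singularity obtained by contracting $C$. The globalization issue you flag in step (ii) is handled in the paper exactly as in your fallback: one invokes the versal deformations of $(Y',D')$ and of its contraction, together with the degree-$2$ simultaneous-resolution picture from \cite{L81}, II.2.4, rather than performing a local surgery by hand.
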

\begin{proof}
Given $\alpha \in \Phi$, by definition there exists a family of Looijenga pairs $(\cY,\cD)/S$, a path $\gamma \colon [0,1] \rightarrow S$, and an identification $(Y,D) = (\cY_{\gamma(0)},\cD_{\gamma(0)})$, 
such that the parallel transport of the class $\alpha \in \Pic(Y)=H^2(Y,\bZ)$ is realized by an internal $(-2)$-curve $C$ on $(Y',D'):=(\cY_{\gamma(1)},\cD_{\gamma(1)})$. 
Let $(\oY',\oD')$ denote the contraction of $C$. Let $(\cY',\cD')/(0 \in T)$ and $(\ocY',\ocD')/(0 \in \oT)$ denote the versal deformations of $(Y',D')$ and $(\oY',\oD')$ respectively. 
Then $(0 \in T)$ and $(0 \in \oT)$ are smooth germs, the locus $H \subset \oT$ of singular fibers is a smooth hypersurface, 
and there is a finite morphism $T \rightarrow \oT$ of degree $2$ with branch locus $H$ and a birational proper morphism $\cY' \rightarrow \ocY' \times_{\oT} T$ which restricts to the minimal resolution of each fiber. See \cite{L81}, II.2.4.
The monodromy of the family around $H$ is given by the Picard-Lefschetz reflection in the class of $[C]$.
Now, using the path $\gamma$, we deduce that the reflection $s_{\alpha}$ lies in the monodromy group of $(Y,D)$.
\end{proof}

\begin{lemma} 
\label{C++lemma}
Let $(Y,D)$ be a Looijenga pair. Let $E \in \Pic(Y)$ be
a class with $E^2 = K_Y \cdot E = -1$. The following are equivalent:
\begin{enumerate}
\item $E \cdot H > 0$ for some nef divisor $H$.
\item $E$ is effective.
\end{enumerate}
The cones $C^{++}$ and $C^{++}_D$ defined in Definition
\ref{cpdef} are invariant under parallel transport
for deformations of Looijenga pairs, and under the action of $W_Y$. 
\end{lemma}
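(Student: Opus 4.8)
The plan is to treat the equivalence and the invariance statements in turn.

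\emph{The equivalence $(1)\Leftrightarrow(2)$.} The implication $(2)\Rightarrow(1)$ is immediate: $E\neq 0$ since $E^2=-1$, and a nonzero effective divisor meets any ample (hence nef) class positively. For $(1)\Rightarrow(2)$ I would use Riemann--Roch and Serre duality on the rational surface $Y$. Since $\chi(\cO_Y)=1$, we get $\chi(\cO_Y(E))=1+\frac{1}{2}(E^2-K_Y\cdot E)=1$, while $h^2(Y,\cO_Y(E))=h^0(Y,\cO_Y(K_Y-E))$. As $K_Y=-[D]$ with $D$ effective and nonzero, $K_Y\cdot H\le 0$ for every nef $H$; so if $E\cdot H>0$ then $(K_Y-E)\cdot H<0$, whence $K_Y-E$ is not effective and $h^2(Y,\cO_Y(E))=0$. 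Thus $h^0(Y,\cO_Y(E))\ge\chi(\cO_Y(E))=1$, i.e.\ $E$ is effective. In particular $E\in\tilde{\cM}$ iff $E^2=K_Y\cdot E=-1$ and $E$ is effective, a condition not involving the choice of ample class --- this is the independence-of-$H$ assertion in Definition~\ref{cpdef}. The same computation gives a dichotomy I will use below: \emph{for any class with $E^2=K_Y\cdot E=-1$, exactly one of $E$ and $K_Y-E$ is effective} --- at most one, since their sum $K_Y=-[D]$ is not effective, and at least one, since $h^0(Y,\cO_Y(E))+h^0(Y,\cO_Y(K_Y-E))=1+h^1(Y,\cO_Y(E))\ge 1$.

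\emph{Invariance under parallel transport.} Let $(\cY,\cD)/S$ be a family of Looijenga pairs and $\gamma\colon[0,1]\to S$ a path; since the fibres are rational, $\Pic=H^2(\cdot,\bZ)$ and parallel transport acts on $\Pic$. It preserves the intersection form and the canonical class (both restrictions of flat data, e.g.\ of $c_1(\omega_{\cY/S})$), and each boundary class $[D_i]$ (the $\cD_i$ being flat families of divisors), hence the condition $E^2=K_Y\cdot E=-1$. To see that effectivity of such a class is preserved I would argue as follows. Over a small disc $\Delta\subset S$ the fibres are rational surfaces, so $R^1\pi_*\cO=R^2\pi_*\cO=0$, and $\Delta$ is contractible; therefore $\Pic(\cY_\Delta)\cong H^2(\cY_\Delta,\bZ)\cong H^2(\cY_s,\bZ)$, so every flat integral class over $\Delta$ is represented by a line bundle on $\cY_\Delta$. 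Given $E_0\in\tilde{\cM}$ on $\cY_{\gamma(0)}$ with transports $E_t$, the two subsets $\{t:E_t\text{ effective}\}$ and $\{t:K_{\cY_{\gamma(t)}}-E_t\text{ effective}\}$ of $[0,1]$ are then closed (upper semicontinuity of $h^0$, applied on the pieces over such discs), disjoint (their sum is $-[\cD_{\gamma(t)}]$, not effective), and cover $[0,1]$ (by the dichotomy above); so each is clopen, hence constant, and since $E_0$ is effective so is $E_1$. Thus $\tilde{\cM}$ is preserved. That $C^+$ is preserved follows similarly, using that ampleness is open in proper flat families: over $\Delta$ a line bundle ample on one fibre is ample on nearby fibres, so an ample class stays in $C^+$ under transport, and one patches along $\gamma$.

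\emph{Conclusion, and the main difficulty.} Since $C^{++}=\{x\in C^+ : x\cdot E\ge 0\text{ for all }E\in\tilde{\cM}\}$ and $C^{++}_D=\{x\in C^{++} : x\cdot[D_i]\ge 0\text{ for all }i\}$, and parallel transport preserves $C^+$, $\tilde{\cM}$, and the classes $[D_i]$, it preserves $C^{++}$ and $C^{++}_D$ as well. Finally, by Lemma~\ref{PL} every reflection $s_\alpha$ with $\alpha\in\Delta_Y\subset\Phi$ lies in the monodromy group of $(Y,D)$, i.e.\ is realized by parallel transport around a loop; hence $W_Y$, generated by such reflections, preserves $C^+$, $\tilde{\cM}$, $C^{++}$, and $C^{++}_D$. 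The step I expect to be the real obstacle is the deformation-invariance of $\tilde{\cM}$: effectivity is not a deformation-invariant property in general, and it is precisely the rigidity provided by the Riemann--Roch dichotomy of the first paragraph --- together with the fact that flat integral classes on these families are locally represented by line bundles --- that forces it to be locally constant along paths.
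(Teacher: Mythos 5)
Your proof is correct. The equivalence $(1)\Leftrightarrow(2)$ is handled exactly as in the paper (Riemann--Roch, with $h^2$ killed because $K_Y\cdot H\le 0$ forces $K_Y-E$ off the effective cone), but for the deformation invariance you take a genuinely different, and considerably longer, route. You transport the \emph{effectivity} criterion (2) along the path: you represent the flat classes by line bundles over small discs (using $R^1\pi_*\cO=R^2\pi_*\cO=0$), invoke upper semicontinuity of $h^0$, and close the argument with the dichotomy ``exactly one of $E$ and $K_Y-E$ is effective'' to produce a clopen decomposition of $[0,1]$. The paper instead transports criterion (1): working locally analytically on the base one chooses an ample divisor $H$ on the total space; its fibrewise restrictions form a flat section that is ample on every fibre, so the condition $E\cdot H>0$ (together with the purely lattice-theoretic conditions $E^2=K\cdot E=-1$) is manifestly constant under parallel transport, and the $H$-independence of $\tilde{\cM}$ --- which you also establish --- guarantees the local answers patch along the path. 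So the difficulty you flag at the end (``effectivity is not deformation-invariant in general'') is real but is sidestepped entirely by the paper: one never needs to know that effectivity of these classes deforms, only that the numerical criterion does. Your argument buys a slightly stronger conclusion (the effective representatives themselves persist, by semicontinuity) at the cost of the sheaf-theoretic machinery; the paper's is a one-line reduction to the flatness of an ample section. The treatment of $W_Y$-invariance via Lemma~\ref{PL} is the same in both.
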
  

\begin{proof}
Obviously (2) implies (1). Riemann--Roch gives (1) implies (2).

Given a family of Looijenga pairs over a base scheme $S$, working locally analytically on $S$ we can choose an ample divisor, $H$, on the total 
space and then compute $C^{++}$ on each fibre using the restriction of $H$. 
From this deformation invariance is clear. Invariance under $W_Y$ follows from Lemma~\ref{PL}.
\end{proof}

\begin{lemma} \label{Moricones} Let $(Y,D)$ be a Looijenga pair. Let 
$\cM \subset \Pic(Y)$ denote the set of classes of $(-1)$-curves not contained in $D$.
\begin{enumerate}
\item
Let $C \subset Y$ be an irreducible curve. Either $C^2 \ge 0$ or $[C] \in \Pic(Y)$
is in the union of $\cM$, $\Delta_Y$ and $\{[D_i]\,|\,1\le i\le n\}$. 
\item
Let $H \in \Pic(Y)$ be an ample class. Then the closure of the
Mori cone of curves 
$\overline{\NE}(Y)$ is the closure of the convex hull of the union of
$$
C^{+} :=\{x \in \Pic(Y)\otimes_{\ZZ}\RR \ | \ x^2 > 0, x \cdot H > 0\}
$$
together with $\Delta_Y$, $\cM$ and $\{[D_i]\,|\,1\le i\le n\}$. 
Equivalently, by Lemma~\ref{C++lemma}, $\overline{\NE}(Y)$ is the closure of the convex hull of the union of $C^+$, $\Delta_Y$, $\tilde{\cM}$, and $\{[D_i]\,|\,1\le i\le n\}$,  where 
$$\tilde{\cM}=\{ E \in \Pic(Y) \ | \ E^2=K_Y \cdot E = -1 \mbox{ and } E \cdot H > 0 \}$$
for some ample divisor $H$ (as in the definition of $C^{++}$).
\end{enumerate}
\end{lemma}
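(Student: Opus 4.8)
The plan is to prove (1) by a direct adjunction count and then deduce (2) in the standard way, using (1) to control the classes of negative square and Riemann--Roch to control the part of the cone of square $\ge 0$ (throughout, the ``closure of the convex hull'' of such a family of generators is understood to mean the closed convex cone they generate). For (1), let $C\subset Y$ be irreducible with $C^2<0$. If $C\subset D$ then $C=D_i$ for some $i$, so assume $C$ is interior. Adjunction on $Y$ gives $2p_a(C)-2=C^2+K_Y\cdot C=C^2-D\cdot C$ with $p_a(C)\ge 0$. If $C\cap D=\emptyset$ then $D\cdot C=0$, so $C^2=2p_a(C)-2\ge -2$; since $C^2<0$ this forces $C^2=-2$ and $p_a(C)=0$, so $C$ is a smooth rational $(-2)$-curve disjoint from $D$ (and then $[C]\in D^{\perp}$ automatically), i.e.\ $[C]\in\Delta_Y$. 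If $C$ meets $D$ then $D\cdot C\ge 1$, so $C^2-D\cdot C\le -2$; with $p_a(C)\ge 0$ this forces $C^2-D\cdot C=-2$, and since $C^2\le -1$ and $D\cdot C\ge 1$ the only possibility is $C^2=-1$, $D\cdot C=1$, so $C$ is a $(-1)$-curve meeting $D$ transversally at a single smooth point and $[C]\in\cM$.

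For (2), let $N$ be the closed convex cone generated by $C^+\cup\Delta_Y\cup\cM\cup\{[D_i]\}$. The classes in $\Delta_Y$, $\cM$ and $\{[D_i]\}$ lie in $\overline{\NE}(Y)$ as they are classes of effective curves, and $C^+\subseteq\overline{\NE}(Y)$ by Riemann--Roch: for an integral class $x$ with $x^2>0$ and $x\cdot H>0$ one has $\chi(\cO_Y(mx))=1+\frac12(m^2x^2+m\,x\cdot D)\to\infty$ (using $\chi(\cO_Y)=1$ and $-K_Y=D$), while $h^2(\cO_Y(mx))=h^0(\cO_Y(K_Y-mx))=0$ since $(K_Y-mx)\cdot H=K_Y\cdot H-m\,x\cdot H<0$ for all $m\ge 0$ (because $K_Y\cdot H=-D\cdot H<0$); hence $h^0(\cO_Y(mx))\ge\chi(\cO_Y(mx))>0$ for $m\gg 0$, so $mx$ is effective and $[x]\in\NE(Y)$, and by density of rational classes $\overline{C^+}\subseteq\overline{\NE}(Y)$. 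Thus $N\subseteq\overline{\NE}(Y)$.

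Conversely, it suffices to show every irreducible curve class lies in $N$. If $C^2\ge 0$ then $[C]\in\overline{C^+}\subseteq N$: this is immediate if $C^2>0$ (as $C\cdot H>0$), and if $C^2=0$ it follows because $C+\eps H\in C^+$ for all small $\eps>0$. If $C^2<0$ then $[C]\in\cM\cup\Delta_Y\cup\{[D_i]\}\subseteq N$ by (1). Since $N$ is a closed convex cone containing all irreducible curve classes, $\overline{\NE}(Y)\subseteq N$, so $\overline{\NE}(Y)=N$. For the reformulation with $\tilde{\cM}$: one has $\cM\subseteq\tilde{\cM}$, while each $E\in\tilde{\cM}$ satisfies $E\cdot H>0$ with $H$ nef, hence is effective by Lemma~\ref{C++lemma}, so $[E]\in\NE(Y)\subseteq\overline{\NE}(Y)=N$; therefore replacing $\cM$ by $\tilde{\cM}$ leaves the cone unchanged.

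The argument has no serious obstacle. The two points needing attention are the Riemann--Roch dichotomy placing $C^+$ inside $\overline{\NE}(Y)$ (one must pass to integral multiples of $x$ and verify $h^2=0$ so that positivity of $\chi$ forces effectivity), and keeping the geometric set $\cM$ of classes of $(-1)$-curves distinct from the purely numerical set $\tilde{\cM}$, the equality of the two resulting cones being supplied by Lemma~\ref{C++lemma}.
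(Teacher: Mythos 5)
Your proposal is correct and follows essentially the same route as the paper: part (1) by the adjunction computation for irreducible curves of negative square, and part (2) by combining Riemann--Roch (to place $C^+$ inside $\overline{\NE}(Y)$ and to absorb irreducible classes of non-negative square into $\overline{C^+}$) with part (1) for the negative classes, plus Lemma~\ref{C++lemma} for the passage from $\cM$ to $\tilde{\cM}$. The paper's proof is just a compressed version of the same argument, so there is nothing further to reconcile.
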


\begin{proof} 
For (1), let $C \subset Y$, $C \not \subset D$, 
be irreducible. If $C^2 <0$ then
$C \in \Delta_Y \cup \cM$ by adjunction. 

For (2), note $C^{+} \subset \NE(Y)$ by Riemann-Roch and if $C$ is 
effective with
$C^2\ge 0$, then $C$ is contained in the closure of $C^+$. 
The description of the Mori cone then follows from (1).
\end{proof}

\begin{lemma} \label{Nefcones} Let $(Y,D)$ be a Looijenga pair
and $H \in \Pic(Y)$ an ample class. 
Then $\Nef(Y) \subset H^2(Y,\bR)$ 
is the closure of the subcone of 
$C^{++}_D$ defined by the inequalities
$x \cdot \alpha \ge 0$ for all $\alpha \in \Delta_Y$.
\end{lemma}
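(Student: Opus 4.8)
The plan is to deduce this by dualizing the description of the Mori cone in Lemma~\ref{Moricones}. Recall that $\Nef(Y)$ is by definition the dual cone $\overline{\NE}(Y)^{\vee}$ with respect to the intersection pairing (Kleiman), and that the dual of a cone depends only on the closure of its convex hull. Hence, by Lemma~\ref{Moricones}(2),
\[
\Nef(Y)=\overline{C^{+}}^{\,\vee}\cap\{x\mid x\cdot\alpha\ge 0\ \forall\alpha\in\Delta_Y\}\cap\{x\mid x\cdot E\ge 0\ \forall E\in\tilde{\cM}\}\cap\{x\mid x\cdot[D_i]\ge 0\ \forall i\}.
\]
The only input from lattice theory is that, since the intersection form on $\Pic(Y)_{\bR}$ has Lorentzian signature, the closed forward cone $\overline{C^{+}}$ is self-dual, i.e.\ $\overline{C^{+}}^{\,\vee}=\overline{C^{+}}$ (here one also uses that for $H$ ample and $0\ne x\in\overline{C^{+}}$ one has $x\cdot H>0$, so $\overline{C^{+}}=\{x\mid x^2\ge 0,\ x\cdot H\ge 0\}$).

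Next I would identify $\overline{C^{+}}\cap\{x\mid x\cdot E\ge 0\ \forall E\in\tilde{\cM}\}\cap\{x\mid x\cdot[D_i]\ge 0\ \forall i\}$ with $\overline{C^{++}_D}$. One inclusion is immediate from the definition of $C^{++}_D$ as a subcone of $C^{+}$. For the reverse, given $x$ in the left-hand side with $x\ne 0$, perturb by $\varepsilon H$ for $\varepsilon>0$: since $x\cdot H>0$ and $H^2>0$ we get $(x+\varepsilon H)^2=x^2+2\varepsilon(x\cdot H)+\varepsilon^2H^2>0$, and since $H\cdot E>0$ for $E\in\tilde{\cM}$ (part of the definition of $\tilde{\cM}$) and $H\cdot[D_i]>0$ we get $(x+\varepsilon H)\cdot E>0$, $(x+\varepsilon H)\cdot[D_i]>0$; thus $x+\varepsilon H\in C^{++}_D$, and letting $\varepsilon\to 0$ gives $x\in\overline{C^{++}_D}$.

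Finally, the same $\varepsilon H$-perturbation shows that $\overline{C^{++}_D}\cap\{x\mid x\cdot\alpha\ge 0\ \forall\alpha\in\Delta_Y\}$ equals the closure of the subcone of $C^{++}_D$ cut out by the same inequalities: here one uses that each $\alpha\in\Delta_Y$ is the class of an (effective) internal $(-2)$-curve, so $H\cdot\alpha>0$ and hence $(x+\varepsilon H)\cdot\alpha>0$. Combining the three steps with the displayed formula yields
\[
\Nef(Y)=\overline{C^{++}_D}\cap\{x\mid x\cdot\alpha\ge 0\ \forall\alpha\in\Delta_Y\},
\]
which is exactly the assertion. The main obstacle — and it is a mild one — is bookkeeping with closures: one must justify commuting "take the closure" with "intersect with half-spaces", which is precisely what the $\varepsilon H$ approximation accomplishes once one knows $\overline{C^{+}}$ is self-dual.
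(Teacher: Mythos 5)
Your proof is correct and follows the same route as the paper, which simply dualizes the description of $\overline{\NE}(Y)$ in Lemma~\ref{Moricones}(2); you have merely filled in the details the paper leaves implicit (self-duality of $\overline{C^{+}}$ under the Lorentzian intersection form, and the $\varepsilon H$-perturbation to handle the closures), and these details check out.
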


\begin{proof} 
Since $\Nef(Y)$ is the dual cone to $\NE(Y)$, this follows immediately
from Lemma~\ref{Moricones}, (2).
\end{proof}

\begin{proof}[Proof of the global Torelli, Theorem \ref{torelliI}]
If $\mu = f^*$ for an isomorphism $f$ then
$\mu$ obviously
satisfies the conditions, and the possibilities for
$f$ are a torsor for $\ker(\Aut(Y,D)\rightarrow\Aut(\Pic(Y)))$,  as in Corollary~\ref{torcor}. 
This is identified in the proof of Proposition~\ref{basicseq}
with $\Hom(N',\Gm)$.

Now assuming we have such a $\mu$, we show
it is induced by an isomorphism of pairs.
We can replace $Y_1$ by a toric blowup 
and $Y_2$ by the corresponding toric blowup, and 
so by Lemma \ref{tmexists} we can assume $Y_1$ has a toric model. Then
$\mu(\Nef(Y_1)) = \Nef(Y_2)$
by Lemma \ref{Nefcones}. Thus the same is true of the Mori cones of curves by
duality. Note also that $\mu(K_{Y_1}) = K_{Y_2}$ since $D$ is anti-canonical. 

The exceptional locus of a toric model $Y_1 \rightarrow \oY_1$ is a disjoint union of chains of interior smooth rational curves $F_1,\ldots,F_r$
with self-intersection numbers $-2,-2,\ldots,-2,-1$, such that $F_j$ is disjoint from $D$ for $j<r$ and $F_r$ meets $D$ transversely in one point.
(Such a chain is the exceptional locus over a point $p \in D$ which is blown up $r$ times.)
By assumption $\mu(\Delta_{Y_1})=\Delta_{Y_2}$, so $\mu$ sends internal $(-2)$-curves to internal $(-2)$-curves.
Also, the class $x$ of a $(-1)$-curve is characterized by $x^2=-1$, $x \cdot K =-1$, and $x$ generates an extremal ray of the Mori cone. 
Thus $\mu$ sends interior $(-1)$-curves to interior $(-1)$-curves. 
Also, since $\mu$ preserves the intersection product, the curves in $Y_2$ corresponding to the exceptional locus of $Y_1 \rightarrow \oY_1$ intersect in the same way, that is, they form a disjoint union of chains. 
Hence there is a birational morphism $(Y_2,D) \rightarrow (\oY_2,\oD)$ which contracts these curves, and is given by a sequence of blowups of the same combinatorial type as $(Y_1,D) \rightarrow (\oY_1,\oD)$.

We claim that the surface $(\oY_2,\oD)$ is toric. Let $(Y,D)$ be a Looijenga pair, and write $e(X)=\sum (-1)^i\dim H^i(X,\bR)$ for the Euler number of a topological space $X$. 
If $(Y',D') \rightarrow (Y,D)$ is a toric blowup then $Y' \setminus D' = Y \setminus D$ so in particular $e(Y'\setminus D') = e(Y\setminus D)$. 
If $(Y',D') \rightarrow (Y,D)$ is a birational morphism of Looijenga pairs given by blowing up a smooth point of $D$ (and defining $D'$ to be the strict transform of $D$) then $e(Y'\setminus D')=e(Y\setminus D)+1$. 
If $(Y,D)$ is toric then $e(Y \setminus D)=e((\bC^{\times})^2)=0$.
Now it follows from the existence of toric models (Lemma~\ref{tmexists}) that a Looijenga pair $(Y,D)$ satisfies $e(Y\setminus D) \ge 0$ with equality iff $(Y,D)$ is toric. 
In our situation we have $e(\oY_1 \setminus \oD) = e(\oY_2 \setminus \oD)$ 
(because $e(Y_1 \setminus D) = e(Y_2 \setminus D)$ and the toric models $(Y_1,D) \rightarrow (\oY_1,\oD)$ and $(Y_2,\oD) \rightarrow (\oY_2,D)$ have the same number of exceptional curves). 
Thus $(\oY_1,\oD)$ toric implies $(\oY_2,\oD)$ toric.

Next observe that the toric pairs $(\oY_1,\oD)$ and $(\oY_2,\oD)$ are isomorphic.  
Indeed, the self-intersection numbers $\oD_i^2$ for $\oY_1$ and $\oY_2$ coincide because the self-intersection numbers $D_i^2$ for $Y_1$ and $Y_2$ coincide 
and the toric models $(Y_1,D) \rightarrow (\oY_1,\oD)$ and $(Y_2,D) \rightarrow (\oY_2,\oD)$ have the same combinatorial type. 
So $(\oY_1,\oD)$ and $(\oY_2,\oD)$ are isomorphic by Remark~\ref{toric_iso_type}.

Now we may apply Corollary \ref{torcor}. 
\end{proof}

\section{The weak Torelli theorem}\label{weaktorellisection}

The following result is due to R.~Friedman.

\begin{theorem}\label{Friedman_roots}(\cite{F13}, Theorem~2.14.)
The set $\Phi$ of roots coincides with the set of classes $\alpha \in \Pic(Y)$ such that $\alpha^2=-2$, $\alpha \cdot D_i =0$ for each $i$, and the associated hyperplane $\alpha^{\perp}$ meets the interior of $C^{++}_D$.
\end{theorem}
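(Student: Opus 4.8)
The plan is to prove $\Phi=\Phi'$, where $\Phi'$ denotes the set on the right-hand side, by establishing the two inclusions separately. Throughout I would use two standing facts. First, $D^{\perp}$, the cone $C^{++}_D$ and the set $\Phi$ are all invariant under toric blowup and under parallel transport in a family of Looijenga pairs (Lemma~\ref{C++lemma} and the remarks preceding Definition~\ref{cpdef}), so I may pass to a toric blowup and I am free to move within a deformation class. Second, $\Phi=-\Phi$: indeed $\Phi$ is invariant under the monodromy group (which contains $W$ by Lemma~\ref{PL}, and $\Phi$ is monodromy-invariant), and $s_{\alpha}(\alpha)=-\alpha$. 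Note also that membership in $\Phi$ already forces $\alpha^{2}=-2$ and $\alpha\in D^{\perp}$, so the content is in the condition on $C^{++}_D$.

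For $\Phi\subseteq\Phi'$: using parallel-transport invariance of $C^{++}_D$ I would reduce to $\alpha=[C]$ for an internal $(-2)$-curve $C$ on $(Y,D)$ itself, and show $[C]^{\perp}\cap\Int(C^{++}_D)\neq\emptyset$. Contract $C$ to get $p\colon(Y,D)\to(\bar Y,\bar D)$ and put $h=p^{*}A$ with $A$ ample on $\bar Y$. Then $h$ is nef, $h^{2}=A^{2}>0$, $h\cdot[C]=0$, and $h\cdot F>0$ for every irreducible curve $F\neq C$ (only $C$ is contracted); hence $h$ lies in the relative interior of the facet $\{x\in\Nef(Y)\mid x\cdot[C]=0\}=p^{*}\overline{\Ample(\bar Y)}$ of $\Nef(Y)$. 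Since $\Nef(Y)\subseteq\overline{C^{++}_D}$ (Lemma~\ref{Nefcones}) and $s_{[C]}\in W_{Y}$ preserves $\overline{C^{++}_D}$ (Lemma~\ref{C++lemma}), also $s_{[C]}(\Nef(Y))\subseteq\overline{C^{++}_D}$, and $\Nef(Y)\cup s_{[C]}(\Nef(Y))$ is a neighbourhood of $h$; therefore $h\in\Int(\overline{C^{++}_D})=\Int(C^{++}_D)$, and $h\in[C]^{\perp}$.

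For $\Phi'\subseteq\Phi$, the harder direction: given $\alpha\in\Phi'$, fix $h\in\alpha^{\perp}\cap\Int(C^{++}_D)$, and after a toric blowup assume $(Y,D)$ has a toric model (Lemma~\ref{tmexists}). Every period point in $T_{D^{\perp}}$ is realised by a Looijenga pair with this toric model (Proposition~\ref{closetotorelli}; cf.\ the universal families of \S\ref{univfam}) and such pairs form a connected family, so I would deform $(Y,D)$ to a pair $(Y',D')$ whose period point is \emph{generic} subject to $\phi_{Y'}(\alpha)=1$, identifying Picard groups so that $C^{++}_D$, $\alpha$ and $h$ are unchanged. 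Since $\alpha$ is primitive (its square is $-2$), genericity forces the only roots $\beta$ with $\phi_{Y'}(\beta)=1$ to be $\pm\alpha$, so $\Delta_{Y'}\subseteq\Phi_{Y'}\subseteq\{\alpha,-\alpha\}$. From $\phi_{Y'}(\alpha)=1$ we get $\cO_{Y'}(\alpha)|_{D}\cong\cO_{D}$, so the exact sequence $0\to\cO_{Y'}(\alpha+K_{Y'})\to\cO_{Y'}(\alpha)\to\cO_{D}\to0$, together with $\chi(\cO_{Y'}(\alpha+K_{Y'}))=0$ (Riemann--Roch, using $\alpha\cdot K_{Y'}=0$), forces $\alpha$ or $-\alpha$ to be effective; replacing $\alpha$ by $-\alpha$ if needed (and invoking $\Phi=-\Phi$ at the end) assume $\alpha$ effective, so $-\alpha\notin\Delta_{Y'}$ and $\Delta_{Y'}\subseteq\{\alpha\}$. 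Then $h\in\overline{C^{++}_D}\cap\{x\cdot\alpha\geq0\}=\Nef(Y')$ by Lemma~\ref{Nefcones}, i.e.\ $h$ is big and nef on $Y'$ with $h\cdot\alpha=0$. Writing $\alpha=\sum m_{j}C_{j}$ with $C_{j}$ irreducible, nefness gives $h\cdot C_{j}=0$ for all $j$; by the Hodge index theorem $C_{j}^{2}<0$, so by Lemma~\ref{Moricones}(1) $C_{j}$ is an interior $(-1)$-curve, a component of $D$, or an internal $(-2)$-curve — and the first two are excluded because $h$ is strictly positive on $\tilde{\cM}$ and on the $D_{i}$ (as $h\in\Int(C^{++}_D)$). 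Hence each $C_{j}$ is an internal $(-2)$-curve, so $[C_{j}]\in\Delta_{Y'}\subseteq\{\alpha\}$; distinct irreducible curves cannot share a class of square $-2$, so $\alpha$ is the class of an internal $(-2)$-curve on $(Y',D')$, i.e.\ $\alpha\in\Delta_{Y'}\subseteq\Phi$. Transporting back to $(Y,D)$ gives $\alpha\in\Phi$.

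The main obstacle is this reverse inclusion, in two spots: (i) the input that every (general enough) period point in a deformation class is attained by a Looijenga pair with a fixed toric model, which I would take from Proposition~\ref{closetotorelli} and the construction of the universal families; and (ii) the bookkeeping that a period-general deformation with $\phi_{Y'}(\alpha)=1$ acquires no internal $(-2)$-curves other than $\pm\alpha$, which rests on the local finiteness of $\Phi$ and the primitivity of $\alpha$. The contraction argument in the first inclusion, and the Riemann--Roch and Hodge-index computations in the second, are routine.
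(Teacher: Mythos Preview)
The paper does not prove this theorem: it simply records it as a result of Friedman and cites \cite{F13}, Theorem~2.14. So there is no ``paper's own proof'' to compare with; your proposal is a self-contained argument, and its reverse inclusion is essentially the same mechanism the paper later uses (following Friedman) to prove Proposition~\ref{simplerootsgenerate}: pick $h\in\alpha^{\perp}\cap\Int(C^{++}_D)$, arrange that $h$ is nef, use Lemma~\ref{-2effective} to make $\alpha$ effective, then use $h\cdot\alpha=0$, Hodge index and Lemma~\ref{Moricones}(1) to force the irreducible components of $\alpha$ to be internal $(-2)$-curves. Your additional step---deforming to a very general period on the hypertorus $\phi(\alpha)=1$ so that $\Delta_{Y'}\subset\{\pm\alpha\}$---is exactly what is needed to turn that mechanism into a proof of the theorem itself rather than merely of $\Phi_Y=W_Y\cdot\Delta_Y$, and it only needs countability of $\Phi$, not local finiteness. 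That part is fine.

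The one genuine soft spot is in your forward inclusion. You take $h=p^{*}A$ and assert that $\Nef(Y)\cup s_{[C]}(\Nef(Y))$ is a neighbourhood of $h$ in $\Pic(Y)_{\bR}$. This amounts to claiming that $\Nef(Y)$ is locally a half-space at $h$, i.e.\ that $[C]^{\perp}$ is the only supporting hyperplane of $\Nef(Y)$ through $h$. While it is true that $C$ is the only irreducible curve with $h\cdot C=0$, it is not automatic that there is a uniform $\epsilon>0$ with $h-\epsilon[C]$ nef: the slice $\{z\in\overline{\NE}(Y):h\cdot z=1\}$ need not be compact, so $\sup\{C\cdot z\}$ over that slice could be infinite. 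You would need an extra argument (e.g.\ local polyhedrality of $\Nef(Y)$ near a big nef class with irreducible null locus) to close this. There is, however, a much cheaper fix that bypasses the issue entirely: for any ample $H$ on $Y$, both $H$ and $s_{[C]}(H)$ lie in $\Int(C^{++}_D)$ (the first because the ample cone is the interior of $\Nef(Y)\subset\overline{C^{++}_D}$, the second because $s_{[C]}$ preserves $C^{++}_D$), hence so does their midpoint $\tfrac12(H+s_{[C]}(H))=H+\tfrac12(H\cdot C)[C]$, which lies on $[C]^{\perp}$. That exhibits a point of $[C]^{\perp}\cap\Int(C^{++}_D)$ without any appeal to local polyhedrality.
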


We recall the following statement about the action of Weyl groups:

\begin{theorem} \label{thWeylchambers} The arrangement
of hyperplanes
$$
\alpha^{\perp} \subset C^{++},\quad \alpha \in W_Y \cdot \Delta_Y
$$
is locally finite. 
The group $W_Y$ acts simply transitively on the Weyl
chambers, and each chamber is
a fundamental domain for the action of $W_Y$ on $C^{++}$.  One chamber 
is defined by the inequalities $x \cdot \alpha \geq 0$ for all $\alpha \in \Delta_Y$
(and for each $\alpha \in \Delta_Y$ the equation $x \cdot \alpha = 0$ defines a codimension one face of this chamber).
The analogous statements hold for the Weyl chambers of $C^{++}_{D}$.
\end{theorem}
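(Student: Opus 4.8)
The plan is to identify the relatively closed subcone $\mathcal{D} := \{\,x\in C^{++}\mid x\cdot\alpha\ge 0 \text{ for all }\alpha\in\Delta_Y\,\}$ as a \emph{strict} fundamental domain for the action of $W_Y$ on $C^{++}$, with the hyperplanes $\alpha^{\perp}$, $\alpha\in\Delta_Y$, as its facets; all assertions of the theorem then follow formally. Throughout one uses that $\Pic(Y)$ is Lorentzian (Hodge index), that $C^{++}$ and $C^{++}_D$ are convex $W_Y$-invariant subcones of the positive cone $C^{+}$ by Lemma~\ref{C++lemma} (for $C^{++}_D$ one also uses that $W_Y$ fixes each $[D_i]$), and that for $\beta=w(\alpha)\in W_Y\cdot\Delta_Y$ the reflection $s_\beta=w s_\alpha w^{-1}$ lies in $W_Y$. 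Note also that $W_Y\subset\Aut(\Pic(Y))$ is a discrete group and preserves $C^{+}$, so every orbit in $\Pic(Y)\otimes\RR$ is discrete.

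First I would establish local finiteness. Normalizing $x_0\in C^{+}$ so that $x_0^2=1$ and writing $\alpha=(\alpha\cdot x_0)x_0+\beta$ with $\beta$ in the negative-definite space $x_0^{\perp}\otimes\RR$, one gets $-\beta^2=(\alpha\cdot x_0)^2+2$; hence the $(-2)$-classes $\alpha$ with $|\alpha\cdot x_0|$ bounded fill only a bounded region of $\Pic(Y)\otimes\RR$ and so are finite in number. Since $\alpha^{\perp}$ meets a given hyperbolic ball around $[x_0]$ only when $|\alpha\cdot x_0|$ is correspondingly bounded, the full $(-2)$-hyperplane arrangement---\emph{a fortiori} its subarrangement indexed by $W_Y\cdot\Delta_Y$---is locally finite on $C^{+}$, hence on $C^{++}$ and $C^{++}_D$. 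Next I would show $W_Y\cdot\mathcal{D}=C^{++}$: fixing an ample class $H$ of $Y$, one has $H\in\operatorname{int}\mathcal{D}$ (the ample cone lies in $\mathcal{D}$ by Lemma~\ref{Nefcones}), so $\mathcal{D}$ is full-dimensional; for $x\in C^{++}$ the orbit $W_Y\cdot x$ is discrete and $v\cdot H$ attains a minimum on it (its sublevel sets are discrete subsets of a bounded region of the hyperboloid $\{v^2=x^2\}\cap C^{+}$), and if a minimizer $v$ had $v\cdot\alpha<0$ for some $\alpha\in\Delta_Y$ then $s_\alpha(v)\cdot H=v\cdot H+(v\cdot\alpha)(\alpha\cdot H)<v\cdot H$, impossible; so the minimizer lies in $\mathcal{D}$, and by invariance $W_Y\cdot\mathcal{D}=C^{++}$.

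The heart of the argument, and the step I expect to be the main obstacle, is the fundamental-domain lemma: if $x\in\mathcal{D}$, $w\in W_Y$ and $w(x)\in\mathcal{D}$, then $w(x)=x$ and $w$ lies in the subgroup generated by those $s_\alpha$ with $\alpha\in\Delta_Y$ and $x\cdot\alpha=0$. I would prove this by induction on the length of $w$ in the generators $\{s_\alpha\mid\alpha\in\Delta_Y\}$ via the exchange condition, following the classical theory of groups generated by reflections in a locally finite hyperplane arrangement on a convex cone (Bourbaki, \emph{Lie groups and Lie algebras}, Ch.~V, \S\,3, and Vinberg's extension to reflection groups in convex cones; compare \cite{L81}). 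The points requiring care are that $W_Y$ and $\Delta_Y$ may be infinite and that one works inside the cone $C^{++}$ rather than an ambient vector space---both are manageable because $C^{++}$ is convex and $W_Y$-invariant and the arrangement is locally finite, so all the relevant combinatorics is local. Granting the lemma: by it together with the previous paragraph each $W_Y$-orbit meets $\mathcal{D}$ in exactly one point, so $\mathcal{D}$ is a strict fundamental domain; if $x\in\operatorname{int}\mathcal{D}$ the subgroup in question is trivial, so no nontrivial element of $W_Y$ fixes $x$---in particular no reflection $s_\beta$, $\beta\in W_Y\cdot\Delta_Y$, does---whence $\operatorname{int}\mathcal{D}$ is disjoint from the whole arrangement and is a single chamber; every chamber is then a $W_Y$-translate of $\operatorname{int}\mathcal{D}$, and two translates coincide only when the group elements agree, giving simple transitivity on chambers and the fundamental-domain property for each closed chamber.

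Finally, for each $\alpha\in\Delta_Y$ the face $\mathcal{D}\cap\alpha^{\perp}$ is codimension one: $\alpha$ is the class of an irreducible $(-2)$-curve, so it spans an extremal ray of $\overline{\NE}(Y)$ by Lemma~\ref{Moricones}, hence $\alpha^{\perp}$ supports a facet of the full-dimensional cone $\Nef(Y)\subseteq\mathcal{D}$ (Lemma~\ref{Nefcones}), and the proper face $\mathcal{D}\cap\alpha^{\perp}$ (it misses $H$) therefore spans $\alpha^{\perp}$; equivalently, $\alpha$ is not a nonnegative combination of the other elements of $\Delta_Y$, since pairing such a relation with $\alpha$ and using $C_i\cdot C_j\ge 0$ for distinct $(-2)$-curves yields $-2\ge 0$. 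For $C^{++}_D$ everything goes through unchanged: $C^{++}_D$ is a closed convex $W_Y$-invariant, hence connected, subcone on which the arrangement is locally finite, the minimizer in the surjectivity step stays in $C^{++}_D$ by invariance (so $W_Y\cdot(\mathcal{D}\cap C^{++}_D)=C^{++}_D$), and the fundamental-domain lemma and facet discussion apply verbatim to $\mathcal{D}\cap C^{++}_D$, whose closure is $\Nef(Y)$.
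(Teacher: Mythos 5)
Your argument is correct, but it is organized quite differently from the paper's proof, which is essentially a three-line reduction: the paper quotes the entire chamber/fundamental-domain statement for the full positive cone $C^+$ from Dolgachev's survey (\cite{D08}, Theorem~2.1), observes that $C^{++}$ and $C^{++}_D$ are full-dimensional $W_Y$-invariant convex subcones of $C^+$ by Lemma~\ref{C++lemma} so that everything restricts, and then obtains the facet assertion by identifying the closed chamber in $C^{++}_D$ with $\Nef(Y)$ via Lemma~\ref{Nefcones}, where classes of $(-2)$-curves cut out codimension one faces. You instead work directly inside $C^{++}$ and unpack the classical theory yourself: local finiteness from the Lorentzian signature, surjectivity $W_Y\cdot\mathcal{D}=C^{++}$ by minimizing $v\cdot H$ over a discrete orbit (using effectivity of each $\alpha\in\Delta_Y$ to get $\alpha\cdot H>0$), and only the Tits exchange-condition lemma is delegated to Bourbaki/Vinberg --- which is precisely the content of the theorem the paper cites, so the two proofs bottom out in the same classical source; your facet argument via $\Nef(Y)$ coincides with the paper's. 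What your route buys is self-containedness and the flexibility of not needing to first decompose all of $C^+$; what it costs is length. Two small imprecisions to fix: the blanket claim that every $W_Y$-orbit in $\Pic(Y)\otimes\RR$ is discrete is false for isotropic or negative vectors (only orbits in $C^+$ are discrete, which is all you actually use); and your ``equivalently'' justification that $\alpha$ is not a nonnegative combination of the other elements of $\Delta_Y$ only rules out redundancy among the inequalities indexed by $\Delta_Y$, not among those defining $C^{++}_D$ itself, so the nef-cone argument should be taken as the real proof of the facet claim.
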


\begin{proof} The analogous statement for chambers in $C^{+}$ is 
a basic result in the theory of hyperbolic reflection groups,
see \cite{D08}, Theorem 2.1. This immediately implies the result
for the chambers in $C^{++}$ or $C^{++}_D$ , as these full dimensional
subcones of $C^+$ are preserved by $W_Y$, see Lemma \ref{C++lemma}.
The closure of the chamber in $C^{++}_D$ defined by $x \cdot \alpha \ge 0$ for each $\alpha \in \Delta_Y$ is identified with the nef cone of $Y$ by Lemma~\ref{Nefcones}.
By definition the elements of $\Delta_Y$ are the classes of $(-2)$-curves on $Y$ and thus define codimension one faces of the nef cone.
\end{proof}

\begin{lemma} \label{-2effective}
Let $(Y,D)$ be a Looijenga pair.
Let $L$ be a line bundle on $Y$ such that $L^2=-2$ and $L|_D \simeq \cO_D$.
Then $h^0(L)>0$ or $h^0(L^{-1})>0$.
\end{lemma}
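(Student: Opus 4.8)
The plan is to argue by contradiction: assume $h^0(L)=0$, and deduce $h^0(L^{-1})>0$.

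First I would record the numerical consequences of the hypothesis. Since $L|_D\simeq\cO_D$, we have $L\cdot D_i=\deg(L|_{D_i})=0$ for every $i$, hence $L\cdot K_Y=-\,L\cdot D=0$ because $D\in|-K_Y|$. Riemann--Roch on the rational surface $Y$ then gives $\chi(\cO_Y(L))=1+\frac{1}{2}(L^2-L\cdot K_Y)=1+\frac{1}{2}(-2)=0$, and the identical computation gives $\chi(\cO_Y(L^{-1}))=0$ as well. Note that this much is not enough: for a purely numerical class $\alpha$ with $\alpha^2=-2$ and $\alpha\cdot K_Y=0$ it is consistent to have $h^0(\alpha)=h^0(-\alpha)=0$, so the proof must exploit the honest isomorphism $L|_D\simeq\cO_D$, not merely the condition $L\in D^{\perp}$.

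The key step is the restriction sequence. Tensoring $0\to\cO_Y(-D)\to\cO_Y\to\cO_D\to 0$ with $\cO_Y(L)$ and using $L-D\sim L+K_Y$ together with $L|_D\simeq\cO_D$ gives
\[
0\to\cO_Y(L+K_Y)\to\cO_Y(L)\to\cO_D\to 0 .
\]
Since $D$ is connected and reduced, $h^0(\cO_D)=1$. Taking cohomology and using the assumption $h^0(L)=0$, we obtain $h^0(L+K_Y)=0$ and an injection $H^0(\cO_D)\hookrightarrow H^1(\cO_Y(L+K_Y))$, so $h^1(L+K_Y)\ge 1$.

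Finally I would translate this back by Serre duality on $Y$: $h^1(L+K_Y)=h^1(-L)$, and $h^2(-L)=h^0(K_Y+L)=h^0(L+K_Y)=0$. Feeding these into $\chi(\cO_Y(L^{-1}))=h^0(-L)-h^1(-L)+h^2(-L)=0$ yields $h^0(L^{-1})=h^1(-L)\ge 1$, contradicting $h^0(L^{-1})=0$. Hence $h^0(L)>0$ or $h^0(L^{-1})>0$. The argument is short; the only thing demanding care is keeping the identification $K_Y\sim -D$ and the three uses of Serre duality straight, and --- as flagged above --- recognising that the essential input is the triviality of $L|_D$ as a \emph{line bundle}, which is exactly what makes the boundary term $\cO_D$ in the restriction sequence contribute a genuine global section.
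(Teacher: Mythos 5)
Your proof is correct and is essentially the same as the paper's: both restrict $L$ to $D$ via the sequence $0\to L\otimes\cO_Y(-D)\to L\to\cO_D\to 0$, use $h^0(L)=0$ to force $h^1(L+K_Y)\ge 1$, and then combine Serre duality with Riemann--Roch ($\chi(L^{-1})=0$) to conclude $h^0(L^{-1})>0$. Your remark that the argument genuinely needs $L|_D\simeq\cO_D$ as a line bundle, not just $L\in D^{\perp}$, is a correct and worthwhile observation.
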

\begin{proof}
Suppose $H^0(L) = 0$.
Using the exact sequence
$$0 \rightarrow L \otimes \cO_Y(-D) 
\rightarrow L \rightarrow \cO_D \rightarrow 0$$
we see that $H^0(L \otimes \cO_Y(-D))=0$ and 
$H^1(L \otimes \cO_Y(-D)) \neq 0$.
Equivalently, by Serre duality, $H^1(L^{-1}) \neq 0$ and $H^2(L^{-1})=0$.
Now by the Riemann--Roch formula
$$h^0(L^{-1}) > \chi(L^{-1})=\chi(\cO_Y)+\frac{1}{2}L^{-1}\cdot(L^{-1}-K_Y)=0.$$
\end{proof}

\begin{proposition} \label{simplerootsgenerate}
Let $(Y,D)$ be a Looijenga pair. Then $\Phi_Y=W_Y \cdot \Delta_Y$.
\end{proposition}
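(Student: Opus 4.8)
The plan is to prove the two inclusions of $\Phi_Y=W_Y\cdot\Delta_Y$ separately; the content of the non-trivial one, $\Phi_Y\subseteq W_Y\cdot\Delta_Y$, will come from Theorem~\ref{Friedman_roots} together with the Weyl-chamber picture of Theorem~\ref{thWeylchambers}. For the inclusion $W_Y\cdot\Delta_Y\subseteq\Phi_Y$ it suffices, since $\Delta_Y\subseteq\Phi_Y$, to check that $\Phi_Y$ is $W_Y$-stable. If $\gamma\in\Phi_Y$ and $\beta\in\Delta_Y$, then $s_\beta(\gamma)=\gamma+\langle\beta,\gamma\rangle\beta$ again lies in $\Phi$ (the set $\Phi$ is invariant under parallel transport, and $W$ lies in the monodromy group by Lemma~\ref{PL}), and it again lies in $D^\perp$; moreover $\phi_Y$ is a homomorphism with $\phi_Y(\beta)=1$ (because $\beta\in\Delta_Y\subseteq\Phi_Y$), so $\phi_Y(s_\beta(\gamma))=\phi_Y(\gamma)\cdot\phi_Y(\beta)^{\langle\beta,\gamma\rangle}=1$, whence $s_\beta(\gamma)\in\Phi_Y$.

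For the reverse inclusion I would take $\alpha\in\Phi_Y$ and show, by contradiction, that $\alpha^\perp$ is one of the reflection hyperplanes $\beta^\perp$ with $\beta\in W_Y\cdot\Delta_Y$. By Theorem~\ref{Friedman_roots} we have $\alpha^2=-2$, $\alpha\cdot D_i=0$ for all $i$, and $\alpha^\perp$ meets $\Int(C^{++}_D)$; and $\phi_Y(\alpha)=1$ gives $\alpha|_D\cong\cO_D$, so Lemma~\ref{-2effective} shows that $\alpha$ or $-\alpha$ is effective (and nonzero, as $\alpha^2\ne 0$). The same properties persist under $W_Y$, by the $W_Y$-invariance of $\Phi_Y$ just established. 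Now suppose $\alpha^\perp\ne\beta^\perp$ for every $\beta\in W_Y\cdot\Delta_Y$. Since $\{\beta^\perp:\beta\in W_Y\cdot\Delta_Y\}$ is locally finite in $C^{++}$ (Theorem~\ref{thWeylchambers}), only finitely many of these hyperplanes meet a given compact subset of the nonempty open set $\alpha^\perp\cap\Int(C^{++}_D)$, and each meets $\alpha^\perp$ in a proper subspace; so I can choose $x_0\in\alpha^\perp\cap\Int(C^{++}_D)$ lying in the interior of some Weyl chamber of $C^{++}_D$. Because $W_Y$ acts simply transitively on these chambers with fundamental chamber $\overline{\Nef(Y)}$ (Theorem~\ref{thWeylchambers} together with Lemma~\ref{Nefcones}), there is $w\in W_Y$ with $w(x_0)$ in the interior of $\Nef(Y)$, hence ample. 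But $w$ preserves the intersection form, so $w(x_0)\cdot w(\alpha)=x_0\cdot\alpha=0$, contradicting that $w(x_0)$ is ample while $\pm w(\alpha)$ is a nonzero effective class. Hence $\alpha^\perp=\beta^\perp$ for some $\beta\in W_Y\cdot\Delta_Y$; since the intersection form is non-degenerate and $\alpha,\beta$ are primitive (having square $-2$), this forces $\alpha=\pm\beta$, and $W_Y\cdot\Delta_Y$ is symmetric because $s_\beta(\beta)=-\beta$. Therefore $\alpha\in W_Y\cdot\Delta_Y$.

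The heavy lifting here is done by the two cited results: Theorem~\ref{Friedman_roots} converts membership in $\Phi$ into the geometric fact that $\alpha^\perp$ meets $\Int(C^{++}_D)$, and Theorem~\ref{thWeylchambers} (with Lemma~\ref{Nefcones}) supplies the chamber structure and identifies the fundamental chamber with the nef cone. The one step in the argument itself that requires real care is the general-position choice of $x_0$: when $\alpha^\perp$ is not already a wall, one must invoke local finiteness of the hyperplane arrangement to place $x_0$ in the interior of a single chamber, so that transporting it into the ample cone yields the contradiction.
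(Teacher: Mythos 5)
Your proof is correct, and while the easy inclusion and the opening reductions coincide with the paper's (Theorem~\ref{Friedman_roots} to produce a point of $\alpha^{\perp}\cap \operatorname{Int}(C^{++}_D)$, Theorem~\ref{thWeylchambers} with Lemma~\ref{Nefcones} to transport it by $W_Y$ toward the nef chamber, and Lemma~\ref{-2effective} to get effectivity of $\pm\alpha$), your conclusion of the hard inclusion $\Phi_Y\subseteq W_Y\cdot\Delta_Y$ is genuinely different. The paper only moves $x$ into the closed chamber, obtaining a nef and big class $H$ with $H\cdot\alpha=0$; it then writes the effective class as $\alpha=\sum a_i[C_i]$, uses the Hodge index theorem and adjunction to see that the $C_i$ are internal $(-2)$-curves spanning a negative definite lattice whose dual graph is a Dynkin diagram of type $A$, $D$, or $E$, and finishes by quoting transitivity of the finite $ADE$ Weyl group on the square $-2$ vectors of its root lattice. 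You instead push the point off \emph{all} reflection hyperplanes --- the general-position step you rightly flag, which needs local finiteness plus the fact that finitely many proper subspaces cannot cover an open subset of $\alpha^{\perp}$ --- so that its $W_Y$-translate lands in the open fundamental chamber, i.e.\ the ample cone, and you derive a contradiction with the effectivity of $\pm w(\alpha)$ unless $\alpha^{\perp}$ is itself one of the walls $\beta^{\perp}$, whence $\alpha=\pm\beta\in W_Y\cdot\Delta_Y$. Your route avoids all root-system input (the $ADE$ classification and transitivity on roots) at the price of the perturbation argument; the paper's route yields a little more structure, exhibiting $\alpha$ explicitly as a root of a concrete configuration of $(-2)$-curves orthogonal to a nef and big class. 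Both arguments rest on the same four supporting results and neither is circular.
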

\begin{proof}
(cf. \cite{F13}, Proof of Theorem~2.14). 
Note that $W$ preserves $\Phi$ by Lemma~\ref{PL} and $W_Y$ preserves the period point $\phi_Y \colon D^{\perp} \rightarrow \bG_m$.
It follows that $W_Y \cdot \Delta_Y \subset \Phi_Y$. 
Conversely, given $\alpha \in \Phi_Y$, we show $\alpha \in W_Y \cdot \Delta_Y$. 
By Theorem~\ref{Friedman_roots} there exists a class $x$ in the interior of $C^{++}_D$ such that $x \cdot \alpha = 0$. 
In particular $x \cdot [D_i] > 0$ for each $i$.
We may assume $x$ is an integral class, say $x=[H]$. By Lemma~\ref{Nefcones} and Theorem~\ref{thWeylchambers}, replacing $x$ and $\alpha$ by $wx$ and $w\alpha$ for suitable $w \in W_Y$, we may assume $x$ lies in the nef cone of $Y$. Also $x^2>0$ (because $\alpha^2=-2<0$ and $x \cdot \alpha = 0$). So $H$ is nef and big. 
By Lemma~\ref{-2effective}, replacing $\alpha$ by $-\alpha$ if necessary, we may assume that $\alpha$ is effective, say $\alpha = \sum a_i[C_i]$ for some irreducible curves $C_i \subset Y$ and $a_i \in \bN$.
Now $\alpha \cdot H = 0$ implies $C_i \cdot H = 0$ for each $i$. In particular no $C_i$ is a component of $D$, so $\alpha \cdot D = 0$ implies $C_i \cdot D =0$ for all $i$. Also, the span of the classes of the $C_i$ is negative definite. 
Now by adjunction each $C_i$ is a $(-2)$-curve, and $\bigcup C_i$ is a configuration of $(-2)$-curves with dual graph a Dynkin diagram of type $A$, $D$, or $E$. (Note that $\bigcup C_i$ is connected because it is the support of the cycle $\sum a_iC_i$ with square $-2$.)
Finally, the Weyl group of a root system of type $A$, $D$, or $E$ acts transitively on the set of roots (and the roots are precisely the elements $\beta$ of the root lattice such that $\beta^2=-2$). So $\alpha \in W_Y \cdot \Delta_Y$.
\end{proof}

\begin{corollary} \label{genericlocus}
Let $(Y,D)$ be a Looijenga pair. Then $(Y,D)$ is generic iff $\phi_Y(\alpha) \neq 1$ for all $\alpha \in \Phi$.
\end{corollary}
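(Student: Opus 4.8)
The plan is to read off the statement directly from Proposition~\ref{simplerootsgenerate}. First I would reformulate both sides in terms of emptiness of sets. By the definition of $\Phi_Y$ in Definition~\ref{rootsdef}, the condition ``$\phi_Y(\alpha)\neq 1$ for all $\alpha\in\Phi$'' is precisely the assertion $\Phi_Y=\emptyset$. On the other side, by definition $(Y,D)$ is generic exactly when it has no internal $(-2)$-curve, i.e.\ when $\Delta_Y=\emptyset$. So the corollary is equivalent to the equivalence $\Delta_Y=\emptyset \iff \Phi_Y=\emptyset$.

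For the implication $\Phi_Y=\emptyset \Rightarrow \Delta_Y=\emptyset$ I would simply invoke the inclusion $\Delta_Y\subset\Phi_Y$ noted just after Definition~\ref{rootsdef}: if $C$ is an internal $(-2)$-curve then $C$ is disjoint from $D$, so its class $\alpha$ lies in $D^\perp$, has $\alpha^2=-2$, is a root (realized by $C$ on $(Y,D)$ itself), and satisfies $\phi_Y(\alpha)=\cO_Y(C)|_D=\cO_D$, i.e.\ $\phi_Y(\alpha)=1$; hence $\alpha\in\Phi_Y$. Thus $\Phi_Y=\emptyset$ forces $\Delta_Y=\emptyset$.

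For the converse $\Delta_Y=\emptyset \Rightarrow \Phi_Y=\emptyset$ I would use Proposition~\ref{simplerootsgenerate}, which gives $\Phi_Y=W_Y\cdot\Delta_Y$. If $\Delta_Y=\emptyset$, then $W_Y$ is the subgroup of $\Aut(\Pic(Y))$ generated by the empty collection of reflections $\{s_\alpha\mid\alpha\in\Delta_Y\}$, hence is the trivial group; consequently $W_Y\cdot\Delta_Y=\emptyset$, and therefore $\Phi_Y=\emptyset$.

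There is no real obstacle here: all the substance is contained in Proposition~\ref{simplerootsgenerate}. The only things requiring care are the two bookkeeping reformulations (``$\phi_Y(\alpha)\neq1$ for all $\alpha$'' $\leftrightarrow$ $\Phi_Y=\emptyset$, and ``generic'' $\leftrightarrow$ $\Delta_Y=\emptyset$) and the trivial but essential remark that a reflection group generated by an empty set is trivial.
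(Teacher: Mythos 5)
Your proof is correct and follows the same route as the paper: both reduce the statement to the equivalence $\Delta_Y=\emptyset \iff \Phi_Y=\emptyset$ and deduce this from Proposition~\ref{simplerootsgenerate}. Your version merely spells out the two trivial bookkeeping steps (the inclusion $\Delta_Y\subset\Phi_Y$ and the emptiness of $W_Y\cdot\Delta_Y$ when $\Delta_Y=\emptyset$) that the paper leaves implicit.
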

\begin{proof}
By definition $(Y,D)$ is generic iff $\Delta_Y = \emptyset$. 
This is equivalent to $\Phi_Y = \emptyset$ by Proposition~\ref{simplerootsgenerate}.
\end{proof}

\begin{proof}[Proof of the weak Torelli Theorem]
Note that $W_{Y_1}$ fixes $\phi_{Y_1}$ and the $[D_i]$ by the definitions, and preserves $C^{++}$ by Lemma~\ref{C++lemma}. So the conditions on the isomorphism $\mu$ of lattices are necessary.
Conversely, suppose given $\mu$ satisfying the hypotheses.
The isomorphism $\mu$ satisfies $\mu(\Phi)=\Phi$ by Theorem~\ref{Friedman_roots} and hence
$\mu(\Phi_{Y_1})=\Phi_{Y_2}$ by condition (4) of the statement of weak Torelli.
Also $\Phi_{Y_i} = W_{Y_i} \cdot \Delta_{Y_i}$ for each $i=1,2$ by Proposition~\ref{simplerootsgenerate}.
Thus $\mu$ sends the $W_{Y_1}$-Weyl chambers of $C^{++}_D \subset \Pic(Y_1)_{\bR}$ to the $W_{Y_2}$-Weyl chambers of $C^{++}_D \subset \Pic(Y_2)_{\bR}$. 
Since $W_{Y_1}$ acts simply transitively on the $W_{Y_1}$-Weyl chambers of $C^{++}_D$, 
there exists a unique $g \in W_{Y_1}$ such that $\mu \circ g$ satisfies $\mu(\Delta_{Y_1})=\Delta_{Y_2}$. Now the global Torelli Theorem applies. 
\end{proof}

\section{First properties of the monodromy group}

\begin{proposition} \label{generics}
Let $(Y,D)$ be a Looijenga pair. Let $(0 \in \Def(Y,D))$ denote the versal deformation space of the pair and $T'_Y=\Hom(D^{\perp},\bG_m)$. 
\begin{enumerate}
 \item The local period mapping $$\phi \colon (0 \in \Def(Y,D)) \rightarrow (\phi_Y \in T'_Y)$$ 
is a local analytic isomorphism.
 \item The locus of generic pairs in $\Def(Y,D)$ is the complement of the inverse image under $\phi$ of the countable union of hypertori 
$$T'_{\alpha}=\{ \psi \in T'_Y \ | \ \psi(\alpha) = 1 \}$$
for $\alpha \in \Phi$.
\end{enumerate}
In particular, every Looijenga pair is a deformation of a generic pair.
\end{proposition}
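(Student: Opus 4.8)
The plan is to prove the two numbered statements in turn, part~(1) furnishing the setup for part~(2). First I would replace $(Y,D)$ by a toric blowup (Lemma~\ref{tmexists}) so that $D$ is a normal crossing cycle with $n\ge 3$ components; this changes neither $\Def(Y,D)$ nor, since $D^{\perp}\simeq (D')^{\perp}$ as lattices, the torus $T'_Y$. For part~(1), the tangent space to $\Def(Y,D)$ at the base point is $H^1(Y,T_Y(-\log D))$ and obstructions lie in $H^2(Y,T_Y(-\log D))$. Since $D\in|-K_Y|$ we have $\Omega^2_Y(\log D)\simeq\cO_Y$, hence $T_Y(-\log D)\simeq\Omega^1_Y(\log D)$ (contraction with $\omega$); Serre duality then identifies $H^2(Y,T_Y(-\log D))$ with the dual of $H^0(Y,\Omega^1_Y(\log D)\otimes\cO_Y(-D))$, which sits inside $H^0(Y,\Omega^1_Y)=0$ as $Y$ is rational. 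So $\Def(Y,D)$ is smooth of dimension $h^1(Y,\Omega^1_Y(\log D))$. I would compute this from the residue sequence $0\to\Omega^1_Y\to\Omega^1_Y(\log D)\to\bigoplus_i\cO_{D_i}\to 0$, using $h^0(\Omega^1_Y)=h^2(\Omega^1_Y)=0$, $h^1(\Omega^1_Y)=\rho(Y)$, $D_i\cong\PP^1$, and the fact that the connecting map $\bigoplus_i H^0(\cO_{D_i})\to H^1(\Omega^1_Y)$ sends the $i$-th generator to $[D_i]$; this yields $h^1(\Omega^1_Y(\log D))=\rho(Y)-\dim_\QQ\langle[D_i]\rangle$. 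On the other side $\dim T'_Y=\rk D^{\perp}=\rho(Y)-\rk\bigl(\Pic(Y)\to\ZZ^n\bigr)$, and since $L\mapsto(L\cdot D_i)$ is dual, via the unimodular intersection pairing, to $\ZZ^n\to\Pic(Y)$, $e_i\mapsto[D_i]$, its rank is also $\dim_\QQ\langle[D_i]\rangle$; so $\dim T'_Y=\dim\Def(Y,D)$.

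It then remains to show the derivative $d\phi$ is injective (hence an isomorphism), and I expect this infinitesimal Torelli statement to be the main obstacle. I would prove it using a toric model for $(Y,D)$, exactly in the spirit of Proposition~\ref{closetotorelli}: a first-order deformation of the pair along which the period point is constant to first order does not move the blown-up points along the boundary components to first order, and is therefore trivial. (Alternatively this can be deduced from the mixed Hodge structure on $H^2(Y\setminus D)$, the period point recording the relevant extension data, or simply cited from Looijenga and Friedman.)

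For part~(2) and the final assertion: by the remark following Definition~\ref{rootsdef}, $\Phi$ is invariant under parallel transport, and since $h^1(\cO_Y)=h^2(\cO_Y)=0$ the local system $R^2\pi_*\ZZ$ of the family over the contractible germ $\Def(Y,D)$ is canonically trivialized and identified with $\Pic$; thus every fiber carries the same lattice, the same $D^{\perp}$ and the same root set $\Phi$, and the unmarked period point of the fiber over $t$ is exactly $\phi(t)\in\Hom(D^{\perp},\bG_m)=T'_Y$. By Corollary~\ref{genericlocus}, the fiber over $t$ is generic iff $\phi(t)(\alpha)\neq 1$ for all $\alpha\in\Phi$, i.e.\ iff $t\notin\phi^{-1}(T'_\alpha)$ for every $\alpha\in\Phi$, i.e.\ iff $t\notin\phi^{-1}\bigl(\bigcup_{\alpha\in\Phi}T'_\alpha\bigr)$; here $\Phi$ is countable as $\Phi\subset\Pic(Y)$, and each $T'_\alpha$ is a proper (codimension-one) closed analytic subtorus of $T'_Y$ since $\alpha\neq 0$ (indeed $\alpha^2=-2$). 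This gives~(2). Finally, by part~(1) each $\phi^{-1}(T'_\alpha)$ is a proper closed analytic subset of the connected complex manifold $\Def(Y,D)$, hence nowhere dense, so by the Baire category theorem $\bigcup_{\alpha\in\Phi}\phi^{-1}(T'_\alpha)$ does not exhaust $\Def(Y,D)$; thus the generic locus is nonempty (in fact dense), and since $\Def(Y,D)$ is path connected, $(Y,D)$ is a deformation of a generic pair.
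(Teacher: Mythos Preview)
Your proposal is correct and follows essentially the same route as the paper, which simply cites \cite{L81}, II.2.5 for part~(1) and invokes Corollary~\ref{genericlocus} for part~(2). You have effectively unpacked the content of the Looijenga citation: the dimension count via the residue sequence and the identification $T_Y(-\log D)\simeq\Omega^1_Y(\log D)$, together with an infinitesimal Torelli argument (which you correctly note can also be read off a toric model or taken from \cite{L81}, \cite{F84}). Your treatment of part~(2) and the Baire-category conclusion are exactly what the paper intends but leaves implicit.
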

\begin{proof} 
The period mapping is a local isomorphism by \cite{L81}, II.2.5.

Statement (2) follows from Corollary~\ref{genericlocus}.
\end{proof}

\begin{definition} 
Let $(Y,D)$ be a Looijenga pair. Let $\Adm_Y$ denote the subgroup of automorphisms of the lattice $\Pic(Y)$ preserving the boundary classes $[D_i]$ and the cone $C^{++}$ (see Definition~\ref{cpdef}).
We say an automorphism $\theta$ of $\Pic(Y)$ is \emph{admissible} if $\theta \in \Adm_Y$.
\end{definition}

\begin{lemma} \label{MonAdmPhi}
Let $(Y,D)$ be a Looijenga pair. The group $\Adm_Y$ contains the monodromy group of $(Y,D)$ and preserves $\Phi$.
\end{lemma}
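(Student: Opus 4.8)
The plan is to verify the two assertions separately. For the statement that $\Adm_Y$ contains the monodromy group, I would argue that any monodromy transformation $\rho(\gamma)$ preserves the three pieces of structure defining $\Adm_Y$. The key observation is that parallel transport along a loop in a family of Looijenga pairs preserves the cup product on $H^2$, and sends the boundary classes $[D_i]$ to themselves (since the components of $D$ are ordered compatibly with the orientation, and this ordering is locally constant, hence constant along $\gamma$). This handles the condition $\mu([D_i])=[D_i]$. For the cone $C^{++}$, I would invoke Lemma~\ref{C++lemma}, which states precisely that $C^{++}$ (and $\tilde{\cM}$) are invariant under parallel transport for deformations of Looijenga pairs; thus $\rho(\gamma)(C^{++})=C^{++}$. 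Together these show $\rho(\gamma)\in\Adm_Y$, so the monodromy group is a subgroup of $\Adm_Y$.

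For the statement that $\Adm_Y$ preserves $\Phi$, I would use Friedman's characterization in Theorem~\ref{Friedman_roots}: $\Phi$ is the set of classes $\alpha\in\Pic(Y)$ with $\alpha^2=-2$, $\alpha\cdot[D_i]=0$ for all $i$, and $\alpha^\perp$ meeting the interior of $C^{++}_D$. An admissible $\theta$ preserves the intersection form, so it preserves the condition $\alpha^2=-2$; it fixes each $[D_i]$, so it preserves the condition $\alpha\cdot[D_i]=0$, i.e.\ it preserves $D^\perp$; and since it preserves $C^{++}$ and fixes each $[D_i]$, it preserves the subcone $C^{++}_D$ (defined inside $C^{++}$ by $x\cdot[D_i]\ge 0$) together with its interior. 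Finally, $\theta(\alpha^\perp)=\theta(\alpha)^\perp$, so if $\alpha^\perp$ meets the interior of $C^{++}_D$ then so does $\theta(\alpha)^\perp$. Hence $\theta(\Phi)\subseteq\Phi$, and applying the same to $\theta^{-1}$ gives equality.

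I do not expect a serious obstacle here; the lemma is essentially an unwinding of definitions once Lemma~\ref{C++lemma} and Theorem~\ref{Friedman_roots} are in hand. The only point requiring a word of care is the invariance of $C^{++}_D$ under $\Adm_Y$: this uses both that $C^{++}$ is preserved and that each $[D_i]$ is fixed, so that the defining inequalities $x\cdot[D_i]\ge 0$ are permuted trivially; and the same remark shows the relative interior is preserved. One should also note that the $W_Y$-invariance of $C^{++}$ in Lemma~\ref{C++lemma} is not needed for this lemma—only the deformation invariance—though it is consistent with the fact (used elsewhere) that $W_Y$ sits inside the monodromy group by Lemma~\ref{PL}.
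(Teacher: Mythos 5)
Your proposal is correct and follows exactly the same route as the paper's (much terser) proof: containment of the monodromy group in $\Adm_Y$ via the parallel-transport invariance of $C^{++}$ from Lemma~\ref{C++lemma}, and preservation of $\Phi$ via Friedman's characterization in Theorem~\ref{Friedman_roots}. You have merely spelled out the definition-unwinding that the paper leaves implicit.
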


\begin{remark}
In fact we show in Theorem~\ref{strong_monodromy} that $\Adm_Y$ is equal to the monodromy group.
\end{remark}

\begin{proof}
The monodromy group preserves the cone $C^{++}$ by Lemma~\ref{C++lemma}, so it is contained in $\Adm_Y$. 
The group $\Adm_Y$ preserves $\Phi$ by Theorem~\ref{Friedman_roots}.

\end{proof}

\begin{lemma}\label{AdmLemma}
Let $(Y,D)$ be a generic Looijenga pair and $\theta \colon \Pic(Y) \rightarrow \Pic(Y)$ an isomorphism of lattices such that $\theta([D_i])=[D_i]$ for each $i$. The following conditions are equivalent:
\begin{enumerate}
 \item $\theta \in \Adm_Y$.
 \item $\theta(\Nef(Y))=\Nef(Y)$.
 \item There exists $H \in \Pic(Y)$ such that $H$ and $\theta(H)$ are ample.
\end{enumerate}
\end{lemma}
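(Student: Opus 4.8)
Here is my plan for proving Lemma~\ref{AdmLemma}.

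\textbf{The plan.} The implications $(2) \Rightarrow (3)$ and $(3) \Rightarrow (2)$ are essentially formal, so I would organize the proof as a cycle $(1) \Rightarrow (2) \Rightarrow (3) \Rightarrow (1)$. The genericity hypothesis ($\Delta_Y = \emptyset$, equivalently there are no internal $(-2)$-curves, equivalently $W_Y$ is trivial by Proposition~\ref{simplerootsgenerate}) is what makes the equivalence clean: by Lemma~\ref{Nefcones}, when $\Delta_Y=\emptyset$ the nef cone $\Nef(Y)$ is simply the closure of $C^{++}_D$, i.e. the subcone of $\overline{C^{++}}$ cut out by the inequalities $x\cdot[D_i]\ge 0$. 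So all three conditions are really statements about how $\theta$ interacts with $C^{++}$ and the boundary classes.

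\textbf{Key steps.} For $(1)\Rightarrow(2)$: assuming $\theta\in\Adm_Y$, by definition $\theta$ preserves $C^{++}$ and each $[D_i]$, hence it preserves the subcone $C^{++}_D\subset C^{++}$ cut out by $x\cdot[D_i]\ge0$, hence its closure, which is $\Nef(Y)$ by Lemma~\ref{Nefcones} together with genericity. For $(2)\Rightarrow(3)$: if $\theta(\Nef(Y))=\Nef(Y)$, pick any ample $H$; then $H\in\operatorname{int}\Nef(Y)$, and since $\theta$ is a lattice automorphism preserving $\Nef(Y)$ it preserves the interior, so $\theta(H)$ is nef and big; to upgrade to ample one notes that on a rational surface the ample cone is the interior of the nef cone (Nakai--Moishezon / Kleiman), so $\theta(H)$ is in fact ample — alternatively just take $H$ in the interior to begin with. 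For $(3)\Rightarrow(1)$: suppose $H$ and $\theta(H)$ are both ample. Since $\theta$ preserves the intersection form, $\theta(H)^2=H^2>0$, and $\theta(H)\cdot H'>0$ forces $\theta(C^+)=C^+$ (the component of the positive cone containing $\theta(H)$ is the one containing all ample classes). Then, as noted in Remark~\ref{screwup}, the condition that $\theta(H)$ is ample for some ample $H$ implies $\tilde{\mathcal M}$ is preserved — more precisely $\theta$ sends the set of classes $E$ with $E^2=K_Y\cdot E=-1$ and $E\cdot H>0$ to the corresponding set for $\theta(H)$, using $\theta(K_Y)=K_Y$ (which holds because $K_Y=-[D]=-\sum[D_i]$ is preserved); hence $\theta(C^{++})=C^{++}$. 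Together with $\theta([D_i])=[D_i]$, this says $\theta\in\Adm_Y$.

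\textbf{Main obstacle.} The only genuinely delicate point is the step where ``nef and big'' must be promoted to ``ample'', and more to the point, the $(3)\Rightarrow(1)$ step showing $\tilde{\mathcal M}$ (equivalently $C^{++}$) is preserved. The clean way is exactly the observation in Remark~\ref{screwup}: any class $E$ with $E^2=K_Y\cdot E=-1$ is effective iff it has positive intersection with some nef (equivalently ample) class, by Lemma~\ref{C++lemma}; and effectiveness is an intrinsic (lattice-independent, $\theta$-equivariant given $\theta(K_Y)=K_Y$) notion once one fixes which half of the positive cone is ``positive''. Since $\theta$ takes an ample $H$ to an ample $\theta(H)$, it takes $\{E : E^2=K_Y\cdot E=-1,\ E\cdot H>0\}$ bijectively to $\{E: E^2=K_Y\cdot E=-1,\ E\cdot\theta(H)>0\}$, and both of these equal $\tilde{\mathcal M}$ by Lemma~\ref{C++lemma} (since $\tilde{\mathcal M}$ is the set of \emph{effective} such classes and is independent of the choice of ample class). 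Hence $\theta(\tilde{\mathcal M})=\tilde{\mathcal M}$, so $\theta$ preserves the defining inequalities of $C^{++}$, giving $\theta(C^{++})=C^{++}$ and thus $(1)$. I expect writing out this last equivariance carefully, keeping track of the orientation of the positive cone, to be where the real content lies; everything else is bookkeeping with Lemmas~\ref{C++lemma} and~\ref{Nefcones}.
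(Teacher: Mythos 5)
Your proposal is correct and follows essentially the same route as the paper: the cycle $(1)\Rightarrow(2)\Rightarrow(3)\Rightarrow(1)$, with $(1)\Rightarrow(2)$ via Lemma~\ref{Nefcones} and genericity, $(2)\Rightarrow(3)$ since the ample cone is the interior of the nef cone, and $(3)\Rightarrow(1)$ by showing $\tilde{\cM}$ (hence $C^{++}$) is preserved. Your elaboration of the last step via the effectivity characterization of $\tilde{\cM}$ in Lemma~\ref{C++lemma} and $\theta(K_Y)=K_Y$ is exactly the content the paper leaves implicit in its citation of Definition~\ref{cpdef}.
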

\begin{proof}
By definition $\theta \in \Adm_Y$ iff $\theta(C^{++})=C^{++}$, and $\Nef(Y)=\overline{C^{++}_D}$ by Lemma~\ref{Nefcones} because $(Y,D)$ is generic. So (1) implies (2).
Clearly (2) implies (3) (because the ample cone is the interior of the nef cone). Finally, suppose $\theta$ satisfies (3). Then $\theta$ preserves the set $\tilde{\cM}$ and hence the cone $C^{++}$ (see Definition~\ref{cpdef}). So (3) implies (1) and the 
equivalence of the statements is proved.
\end{proof}

\begin{lemma} \label{lim=exc_for_gen}
Let $(Y,D)$ be a generic Looijenga pair. Then any limiting configuration on $Y$ is an exceptional configuration.
\end{lemma}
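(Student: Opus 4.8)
The plan is to carry the nef cone of a nearby \emph{generic} pair over to $(Y,D)$ by parallel transport and then recognise the transported exceptional classes as $(-1)$-curves using only extremality in the Mori cone. Unwinding the definition: a limiting configuration on $(Y,D)$ is, for some analytic family $(\cY,\cD)/S$ and path $\gamma\colon[0,1]\to S$ with $(Y,D)=(\cY_{\gamma(1)},\cD_{\gamma(1)})$, the parallel transport along $\gamma$ of an exceptional configuration $\{E'_{ij}\}$ on the generic pair $(Y',D'):=(\cY_{\gamma(0)},\cD_{\gamma(0)})$. By the definition of the \emph{extended} notion of (limiting/exceptional) configuration it suffices to treat the ``direct'' case, and after replacing $(Y,D)$ and $(Y',D')$ by suitable toric blowups — which preserves genericity and does not affect the conclusion — I may assume $n\ge 3$ and that $(Y',D')$ carries an honest toric model $\pi'\colon Y'\to\oY'$ (Lemma~\ref{tmexists}). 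Since $(Y',D')$ is generic, the centres of $\pi'$ are \emph{distinct} points of $\oD'$, so $\{E'_{ij}\}$ is a set of pairwise disjoint irreducible interior $(-1)$-curves with $E'_{ij}\cdot[D'_k]=\delta_{ik}$, and their number is $\rho(Y')-(n-2)$ (where $\rho$ denotes Picard number and $\rho(\oY')=n-2$ since $\oY'$ is toric).

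Next I would transport the Mori cone. Let $\mu\colon\Pic(Y')\to\Pic(Y)$ be the parallel transport isometry; it satisfies $\mu(K_{Y'})=K_Y$ and $\mu([D'_i])=[D_i]$, and by Lemma~\ref{C++lemma} it preserves $\tilde{\cM}$ and the cone $C^{++}$, hence also $C^{++}_D$. As both $(Y',D')$ and $(Y,D)$ are generic, Lemma~\ref{Nefcones} identifies each nef cone with $\overline{C^{++}_D}$, so $\mu(\Nef(Y'))=\Nef(Y)$; dually, $\mu$ carries $\overline{\NE}(Y')$ isomorphically onto $\overline{\NE}(Y)$, preserving the face lattice.

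Now set $E_{ij}:=\mu([E'_{ij}])$. The blow-down of $E'_{ij}$ exhibits $\RR_{\ge 0}[E'_{ij}]$ as an extremal ray of $\overline{\NE}(Y')$, so $\RR_{\ge 0}E_{ij}$ is an extremal ray of $\overline{\NE}(Y)$. Also $E_{ij}\in\tilde{\cM}(Y)$, hence effective by Lemma~\ref{C++lemma}; writing $E_{ij}=\sum_l a_l[C_l]$ with the $C_l$ distinct irreducible curves and $a_l\ge 1$, extremality forces every $[C_l]$ onto the ray $\RR_{\ge 0}E_{ij}$, and since $E_{ij}$ is primitive ($E_{ij}^2=-1$) we get $[C_l]=t_l E_{ij}$ with $t_l\in\ZZ_{>0}$ and $\sum_l a_l t_l=1$, so $E_{ij}=[C_1]$ is irreducible. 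By adjunction $C_1\cong\PP^1$ with $C_1^2=-1$, and $C_1$ is not a component of $D$ (if $[C_1]=[D_m]$ then $D_m\cdot D_k=\delta_{ik}$ for all $k$, impossible in an $n$-cycle with $n\ge 3$), so each $E_{ij}$ is the class of an interior $(-1)$-curve on $Y$.

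Finally I would contract. The $E_{ij}$ are classes of pairwise disjoint interior $(-1)$-curves, each meeting only the component $D_i$ and transversally at a smooth point of $D$; contracting them gives $\pi\colon(Y,D)\to(\oY,\oD)$ with $\oD$ again a cycle of $n$ rational curves and $\oD\in|-K_{\oY}|$, and $\rho(\oY)=\rho(Y)-\#\{E_{ij}\}=\rho(Y)-(\rho(Y')-(n-2))=n-2$ since $\rho(Y)=\rho(Y')$. Hence $e(\oY\setminus\oD)=2+\rho(\oY)-n=0$, so $(\oY,\oD)$ is toric by the criterion established in the proof of Theorem~\ref{torelliI}, and therefore $\pi$ is a toric model with exceptional configuration $\{E_{ij}\}$, as required. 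I expect the one genuinely load-bearing step to be the identification $\mu(\Nef(Y'))=\Nef(Y)$: this is exactly where genericity of \emph{both} pairs is indispensable — via Lemma~\ref{Nefcones} (so that each nef cone equals $\overline{C^{++}_D}$) together with the parallel-transport invariance of $C^{++}$ from Lemma~\ref{C++lemma}; granting that, the remaining extremality argument and the contraction are routine.
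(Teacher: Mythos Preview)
Your proof is correct and follows essentially the same approach as the paper: both identify $\Nef(Y')$ with $\Nef(Y)$ via parallel transport (using Lemma~\ref{Nefcones} and genericity on both ends, together with Lemma~\ref{C++lemma}), then recognise the transported classes as extremal rays/codimension-one faces and hence as disjoint interior $(-1)$-curves, and finally contract to a toric pair via the Euler-characteristic criterion from the proof of global Torelli. Your version is simply more explicit about the irreducibility step (via extremality and primitivity) and the $\rho(\oY)=n-2$ computation, but the architecture is identical.
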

\begin{proof}
By definition, a limiting configuration on $Y$ is the parallel transport of an exceptional configuration on a generic pair $(Y_0,D)$.
Note that $\Nef(Y_0)$ and $\Nef(Y)$ are identified under parallel transport 
(because for a generic pair the nef cone coincides with $\overline{C^{++}_D}$ by Lemma~\ref{Nefcones}, and this cone is invariant under parallel transport by Lemma~\ref{C++lemma}). 
The elements of the exceptional configuration on $Y_0$ define codimension one faces of $\Nef(Y_0)$. 
Hence the elements $E_{ij}$ of the limiting configuration define codimension one faces of $\Nef(Y)$. 
Now by Lemma~\ref{Moricones}(1) and the intersection numbers it follows that the $E_{ij}$ are a collection of disjoint interior $(-1)$-curves. 
As in the proof of the global Torelli theorem, contracting these curves yields a toric pair $(\oY,\oD)$, so $\{E_{ij}\}$ is an exceptional configuration.
\end{proof}

\begin{theorem}\label{cremona}
Let $(Y,D)$ be a Looijenga pair.
The group $\Adm_Y$ acts simply transitively on the set of limiting configurations of (any given) combinatorial type.
\end{theorem}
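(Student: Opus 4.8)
The plan is to reduce to the case where $(Y,D)$ is generic and carries a toric model, and then to exhibit the action of $\Adm_Y$ via parallel transport in the family obtained by moving the centres of the blow-ups. \emph{Reductions.} The group $\Adm_Y$, the cone $C^{++}$, the boundary classes $[D_i]$, and the notion of a limiting configuration of a fixed combinatorial type are all carried to the corresponding objects under parallel transport in a family of Looijenga pairs, and are unaffected by passage to a toric blow-up. Since every Looijenga pair deforms to a generic one (Proposition~\ref{generics}), I may assume $(Y,D)$ is generic; replacing it by a toric blow-up if necessary, I may assume it has a toric model. Then by Lemma~\ref{lim=exc_for_gen} the limiting configurations on $(Y,D)$ are exactly the exceptional configurations, so it suffices to show that $\Adm_Y$ acts simply transitively on the set $\mathcal{C}_\tau$ of exceptional configurations of each fixed combinatorial type $\tau=(r_1,\dots,r_n)$.

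\emph{The family.} By Remark~\ref{toric_iso_type} there is, up to isomorphism respecting the labelled, oriented boundary, a unique smooth toric Looijenga pair $(\oY_\tau,\oD_\tau)$ with $\oD_{\tau,i}^2=D_i^2+r_i$, and the target of every toric model of $(Y,D)$ of type $\tau$ is isomorphic to it. Let $\mathcal{B}$ be the variety parametrising ordered tuples $(q_{ij})$ with $q_{ij}\in\oD_{\tau,i}^o$ and, for each $i$, the points $q_{i1},\dots,q_{ir_i}$ distinct; it is smooth and connected. Let $(\mathcal{Y},\mathcal{D})\to\mathcal{B}$ be the family whose fibre over $(q_{ij})$ is the blow-up of $\oY_\tau$ at the $q_{ij}$, with $\mathcal{D}$ the strict transform of $\oD_\tau$; this is a smooth proper family of Looijenga pairs in which the exceptional divisors and the components $\mathcal{D}_i$ are flat over $\mathcal{B}$. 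A configuration $c=\{E_{ij}\}\in\mathcal{C}_\tau$, together with a choice of isomorphism of the target of its toric model with $(\oY_\tau,\oD_\tau)$, identifies $(Y,D)$ with the fibre over the point $b_c\in\mathcal{B}$ recording the (distinct) images of the centres, by an isomorphism of Looijenga pairs $g_c\colon\mathcal{Y}_{b_c}\xrightarrow{\sim}Y$ under which $g_c^{*}$ sends each $E_{ij}$ to the class of the corresponding exceptional divisor of $\mathcal{Y}_{b_c}$ and each $[D_i]$ to $[\mathcal{D}_{i,b_c}]$.

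\emph{Simple transitivity.} Given $c_1,c_2\in\mathcal{C}_\tau$, choose a path $\gamma$ in the connected variety $\mathcal{B}$ from $b_{c_1}$ to $b_{c_2}$. Parallel transport $\mathrm{PT}_\gamma\colon\Pic(\mathcal{Y}_{b_{c_1}})\to\Pic(\mathcal{Y}_{b_{c_2}})$ takes exceptional-divisor classes to exceptional-divisor classes and $[\mathcal{D}_{i,b_{c_1}}]$ to $[\mathcal{D}_{i,b_{c_2}}]$ (flatness of these classes in the family), and takes $C^{++}$ to $C^{++}$ by Lemma~\ref{C++lemma}. Hence $\Theta:=(g_{c_2}^{*})^{-1}\circ\mathrm{PT}_\gamma\circ g_{c_1}^{*}$ lies in $\Adm_Y$ and carries $c_1$ to $c_2$; so $\Adm_Y$ acts transitively on $\mathcal{C}_\tau$. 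For freeness, suppose $\theta\in\Adm_Y$ fixes a configuration $c=\{E_{ij}\}$ with toric model $\pi\colon Y\to\oY$. Then $\Pic(Y)=\pi^{*}\Pic(\oY)\oplus\bigoplus_{ij}\ZZ E_{ij}$, and $\pi^{*}\Pic(\oY)$ is spanned by the classes $\pi^{*}[\oD_i]=[D_i]+\sum_j E_{ij}$; since $\theta$ fixes every $E_{ij}$ and, being admissible, every $[D_i]$, it fixes every $\pi^{*}[\oD_i]$, whence $\theta=\operatorname{id}$. Undoing the reductions gives the statement for limiting configurations on an arbitrary $(Y,D)$.

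\emph{Main difficulty.} The substantive part is the construction of the family $(\mathcal{Y},\mathcal{D})/\mathcal{B}$ together with the accompanying bookkeeping: one must verify that \emph{every} exceptional configuration of the generic pair arises, with compatible ordering and orientation of the boundary, as the exceptional data of a fibre of this single family, so that any two configurations of type $\tau$ are joined by a path in the connected base $\mathcal{B}$ along which parallel transport matches the exceptional classes exactly. Granting this, transitivity is just the deformation-invariance of $C^{++}$ from Lemma~\ref{C++lemma}, and freeness is immediate.
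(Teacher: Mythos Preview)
Your argument establishes two things: (i) for any $c_1,c_2\in\mathcal{C}_\tau$ there exists $\Theta\in\Adm_Y$ with $\Theta(c_1)=c_2$, and (ii) the stabiliser of any $c\in\mathcal{C}_\tau$ in $\Adm_Y$ is trivial. Together these give an injection $\mathcal{C}_\tau\hookrightarrow\Adm_Y$, $c\mapsto\Theta_c$. But they do \emph{not} show that $\Adm_Y$ actually acts on $\mathcal{C}_\tau$: you never verify that for an \emph{arbitrary} $\theta\in\Adm_Y$ and $c\in\mathcal{C}_\tau$ one has $\theta(c)\in\mathcal{C}_\tau$. Without this the map $\mathcal{C}_\tau\hookrightarrow\Adm_Y$ need not be surjective, and ``simply transitive'' does not follow. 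Concretely, nothing in your parallel-transport construction produces every element of $\Adm_Y$; it only produces those $\Theta$ that happen to carry the fixed $c_1$ to some other exceptional configuration.

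The paper fills exactly this gap, and it is the substantive half of its proof: for $\theta\in\Adm_Y$ one has $\theta(\Nef(Y))=\Nef(Y)$ (since $(Y,D)$ is generic, $\Nef(Y)=\overline{C^{++}_D}$ by Lemma~\ref{Nefcones}), so $\theta$ permutes extremal rays; then intersection numbers force the $\theta(E_{ij})$ to be disjoint interior $(-1)$-curves, and an Euler-characteristic count shows their contraction is toric of the same type. For the converse direction, where you invoke the family and parallel transport, the paper instead writes down the unique lattice isomorphism $\theta$ with $\theta(E_{ij})=F_{ij}$ and $\theta([D_i])=[D_i]$ and checks $\theta\in\Adm_Y$ directly via Lemma~\ref{AdmLemma}, by exhibiting an ample $H=N\pi^*\oH-\sum E_{ij}$ with $\theta(H)$ also ample. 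Your family argument for this direction is a legitimate alternative and is pleasantly geometric, but it does not substitute for the missing half. (A minor point: the invariance of $\Adm_Y$ under toric blow-up, which you invoke in the reductions, is itself proved inside the paper's proof of this theorem and deserves at least a sentence.)
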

\begin{proof}
We may assume that $(Y,D)$ is generic by Proposition~\ref{generics}.

We show that if $\pi \colon (Y',D') \rightarrow (Y,D)$ is a toric blowup, then we have a natural identification $\Adm_{Y'}=\Adm_Y$. 
Note that $(Y,D)$ generic implies $(Y',D')$ generic by the definition of generic, so we may use the equivalent conditions above.
We may assume that $\pi$ is a simple toric blowup, with unique exceptional divisor $E$.
Given $\theta \in \Adm_Y$, we define a homomorphism $\theta' \colon \Pic(Y') \rightarrow \Pic(Y')$ by $\theta'(\pi^*\alpha)=\pi^*\theta(\alpha)$ and $\theta'([E])=[E]$. We claim that $\theta' \in \Adm_{Y'}$. It is clear that $\theta'$ is an isomorphism of lattices and 
$\theta'([D_i'])=[D_i']$ for each component $D_i'$ of the boundary $D' \subset Y'$. Letting $H \in \Pic(Y)$ be ample, then $\theta(H)$ is also ample on $Y$. Now for $N \in \bN$ sufficiently large, $H':=N\pi^*H - E$ and 
$\theta'(H')$ are ample on $Y'$. So $\theta' \in \Adm_{Y'}$. The map $\Adm_Y \rightarrow \Adm_{Y'}$ defined in this way is clearly a group homomorphism. Conversely, given $\theta' \in \Adm_{Y'}$, we have $\theta'([E])=[E]$. 
Thus we can define $\theta \colon \Pic(Y) \rightarrow \Pic(Y)$ by restricting $\theta'$ to $E^{\perp}$ and using the identification $E^{\perp}=\Pic(Y)$ given by $\pi^*$. 
Then $\theta$ is an isomorphism of lattices and $\theta([D_i])=[D_i]$ for each $i$.
Now letting $H'$ be ample on $Y'$, then $\theta'(H')$ is also ample on $Y'$. Hence $H:=\pi_*H'$ and $\theta(H)=\pi_*(\theta'(H'))$ are ample on $Y$, so $\theta \in \Adm_Y$. This defines a homomorphism $\Adm_{Y'} \rightarrow \Adm_{Y}$ which is 
clearly the inverse of the homomorphism described above. 
 
Let $\theta \in \Adm_Y$, and let $\{E_{ij}\}$ be an exceptional configuration on a toric blowup $(Y',D')$ of $(Y,D)$. We show that $\{\theta(E_{ij})\}$ is another exceptional configuration of the same combinatorial type.
Using the identification $\Adm_Y=\Adm_{Y'}$ proved above, we may assume $Y=Y'$. We have $\theta(\Nef(Y))=\Nef(Y)$, so the $\theta(E_{ij})$ define codimension one faces of $\Nef(Y)$. We can now conclude as in the proof of Lemma~\ref{lim=exc_for_gen} above.

Conversely, let $\{E_{ij}\}$, $\{F_{ij}\}$ be two exceptional configurations on $(Y,D)$ of the same combinatorial type. Clearly there is a unique isomorphism of lattices $\theta \colon \Pic(Y) \rightarrow \Pic(Y)$ such that $\theta([D_i])=[D_i]$ for all $i$ and
$\theta([E_{ij}])=[F_{ij}]$ for all $i$ and $j$. We must show that $\theta \in \Adm_Y$. 
Let $\pi \colon (Y,D) \rightarrow (\oY,\oD)$ denote the contraction of the $\{E_{ij}\}$, and $\pi' \colon (Y,D) \rightarrow (\oY,\oD)$ the contraction of the $\{F_{ij}\}$. (Note that the toric pairs $(\oY,\oD)$ obtained by the contractions are 
(non-canonically) isomorphic because the exceptional configurations have the same combinatorial type.) Let $\oH=\sum a_i\oD_i$ be ample on $\oY$. 
Then for $N \in \bN$ sufficiently large both $H=N\pi^*\oH - \sum E_{ij}$ and $\theta(H)=N(\pi')^*\oH - \sum F_{ij}$ are ample on $Y$. So $\theta \in \Adm_Y$.
\end{proof}

\section{Automorphisms, universal families, and the monodromy group} \label{univfam}

Given $(Y_0,D)$ a generic Looijenga pair, let $(Y_e,D)$ be a Looijenga pair deformation equivalent to $(Y_0,D)$ with period point $\phi_{Y_e}$ given by $\phi_{Y_e}(\alpha)=1$ for all $\alpha \in D^{\perp} \subset \Pic(Y_e)$.
(Note that existence of $(Y_e,D)$ follows from the construction of Proposition~\ref{closetotorelli}, and $(Y_e,D)$ is uniquely determined up to isomorphism by the weak Torelli theorem.)

We analyze the relationship between the Weyl group $W$, the group $\Adm_{Y_0}$, and the automorphisms groups of Looijenga pairs deformation equivalent to $(Y_0,D)$.

\begin{theorem}
\label{exactseqtheorem}
Let $(Y_0,D)$ be a generic Looijenga pair and define $(Y_e,D)$ as above. 
Then $W \subset \Adm_{Y_0}$
is a normal subgroup and there is an exact sequence 
\begin{equation}\label{autoexactseqv1}
1 \to \Hom(N',\bG_m) \to \Aut(Y_e,D) \to \Adm_{Y_0}/W \to 1
\end{equation}
where $N'$ is the group defined in Theorem~\ref{torelliI}.

More generally, for $(Y,D)$ an arbitrary Looijenga pair deformation equivalent to $(Y_0,D)$, let 
$\Hodge_Y \subset \Adm_{Y_0}$ denote the stabilizer of the period point $\phi_Y$ (for some choice of marking of $\Pic(Y)$).
Then we have an exact sequence
\begin{equation}\label{autoexactseqv2}
1 \rightarrow \Hom(N',\bG_m) \rightarrow \Aut(Y,D) \rightarrow \Hodge_Y/W_Y \rightarrow 1
\end{equation}
\end{theorem}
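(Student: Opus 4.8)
The plan is to deduce both sequences from the tautological exact sequence
\[
1 \longrightarrow \ker\bigl(\Aut(Y,D)\to\Aut(\Pic(Y))\bigr) \longrightarrow \Aut(Y,D) \mapright{f\mapsto f^{*}} G_{Y} \longrightarrow 1,
\]
where $G_{Y}\subseteq\Aut(\Pic(Y))$ is the image. By Proposition~\ref{basicseq}, as identified in the proof of Theorem~\ref{torelliI}, the kernel is canonically $\Hom(N',\bG_m)$ with $N'$ as in Theorem~\ref{torelliI} (and $N'$ is unchanged under any marking, being built from the $[D_i]$ and the lattice structure). So the real work is to identify $G_Y$, and I will get \eqref{autoexactseqv1} as the special case $Y=Y_e$ of \eqref{autoexactseqv2}.

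First I would fix a marking $\mu\colon\Pic(Y_0)\to\Pic(Y)$ and transport everything to $\Pic(Y_0)$, writing $\psi=\phi_Y\circ\mu$, $\Delta=\mu^{-1}(\Delta_Y)$, and keeping the names $\Phi$, $C^{++}$, $C^{++}_D$, $\Nef(Y)$ for the transported data; then $\Hodge_Y=\Adm_{Y_0}\cap\Stab(\psi)$ and $W_Y$ is identified with $\langle s_\alpha\mid\alpha\in\Delta\rangle$. Conjugating $f\mapsto f^{*}$ by $\mu$ and applying the global Torelli theorem (with $Y_1=Y_2=Y$), the element $g=\mu^{-1}f^{*}\mu$ lies in $G_Y$ if and only if $g$ fixes each $[D_i]$, $g(C^{++})=C^{++}$, $g(\Delta)=\Delta$, and $\psi\circ g=\psi$; in other words
\[
G_Y \;=\; A:=\{\,g\in\Hodge_Y\mid g(\Delta)=\Delta\,\}.
\]
It is then routine that $W_Y\subseteq\Hodge_Y$ (each $s_\alpha$, $\alpha\in\Delta\subseteq D^{\perp}$, fixes the $[D_i]$, preserves $C^{++}$ by Lemma~\ref{C++lemma}, and fixes $\psi$ because $\psi(\alpha)=1$), and that $W_Y\trianglelefteq\Hodge_Y$: any $g\in\Hodge_Y$ lies in $\Adm_{Y_0}$, hence preserves $\Phi$ (Lemma~\ref{MonAdmPhi}) and fixes $\psi$, hence preserves $\Phi_{Y}:=\{\alpha\in\Phi\mid\psi(\alpha)=1\}$; since $\Phi_{Y}=W_Y\cdot\Delta$ by Proposition~\ref{simplerootsgenerate}, the conjugate $gs_\alpha g^{-1}=s_{g(\alpha)}$ stays inside $W_Y$.

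The main step is that the inclusion $A\hookrightarrow\Hodge_Y$ induces an isomorphism $A\cong\Hodge_Y/W_Y$. For injectivity, if $w\in W_Y$ permutes $\Delta$ then it preserves the cone $\overline{\Nef(Y)}$, which by Lemma~\ref{Nefcones} is the subcone of $C^{++}_D$ cut out by $x\cdot\alpha\ge 0$ for $\alpha\in\Delta$; since $\overline{\Nef(Y)}$ is a Weyl chamber of $C^{++}_D$ on which $W_Y$ acts simply transitively (Theorem~\ref{thWeylchambers}), $w=1$, so $A\cap W_Y=\{1\}$. For surjectivity, given $g\in\Hodge_Y$, it preserves $\Phi_{Y}$ and $C^{++}_D$, hence permutes the Weyl chambers of $C^{++}_D$, so there is a unique $w\in W_Y$ with $h:=wg$ fixing $\overline{\Nef(Y)}$. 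Now $h\in\Hodge_Y$ preserves both the chamber $\overline{\Nef(Y)}$ and the root set $\Phi_{Y}$, so it permutes the codimension-one faces of $\overline{\Nef(Y)}$ interior to $C^{++}_D$, which by Theorem~\ref{thWeylchambers} are exactly the $\alpha^{\perp}$, $\alpha\in\Delta$; and since $h$ fixes the chamber, for $\alpha\in\Delta$ the class $h(\alpha)$ is the element of $\Delta$ defining the image face (the sign being pinned down by $h(\alpha)\cdot\overline{\Nef(Y)}=\alpha\cdot h^{-1}(\overline{\Nef(Y)})\ge 0$). Thus $h(\Delta)=\Delta$, $h\in A$, and $g=w^{-1}h\in W_Y\cdot A$, so $\Hodge_Y=W_Y\cdot A$. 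This gives \eqref{autoexactseqv2}. Taking $Y=Y_e$, the period point $\psi$ is trivial, so $\Hodge_{Y_e}=\Adm_{Y_0}$ and $\Phi_{Y_e}=\Phi$; hence $W_{Y_e}=\langle s_\alpha\mid\alpha\in\Delta_{Y_e}\rangle=\langle s_\alpha\mid\alpha\in\Phi_{Y_e}\rangle=\langle s_\alpha\mid\alpha\in\Phi\rangle=W$ (using Proposition~\ref{simplerootsgenerate} and Definition~\ref{rootsdef}), and the normality just proved specializes to $W\trianglelefteq\Adm_{Y_0}$; this is \eqref{autoexactseqv1}.

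I expect the third paragraph to be the main obstacle: one must run the chamber/face combinatorics carefully, using precisely that (by Theorem~\ref{thWeylchambers} and Lemma~\ref{Nefcones}) $\overline{\Nef(Y)}$ is a Weyl chamber of $C^{++}_D$ whose interior walls are the $\alpha^{\perp}$, $\alpha\in\Delta$, and one must continually keep track that each cone and root set in play ($C^{++}$, $C^{++}_D$, $\Phi$, $\Phi_{Y}$, and the defining $\Delta$-inequalities) is preserved by the relevant group. The latter bookkeeping rests on Lemmas~\ref{C++lemma} and~\ref{MonAdmPhi}, Theorem~\ref{Friedman_roots}, and the global Torelli theorem, all of which are already available.
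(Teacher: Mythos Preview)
Your proposal is correct and follows essentially the same line as the paper's proof: identify the kernel via Proposition~\ref{basicseq}, use the global Torelli theorem to characterize the image of $\Aut(Y,D)\to\Aut(\Pic(Y_0))$, establish normality of $W_Y$ in $\Hodge_Y$ via $\Phi_Y=W_Y\cdot\Delta_Y$, and then use the simply transitive $W_Y$-action on Weyl chambers of $C^{++}_D$ (Theorem~\ref{thWeylchambers} and Lemma~\ref{Nefcones}) to get $A\cong\Hodge_Y/W_Y$, finally specializing to $Y=Y_e$. Your third paragraph spells out more carefully than the paper does why an element preserving the chamber $\overline{\Nef(Y)}$ must permute $\Delta$ (the paper simply asserts this from the face correspondence), but the argument is the same.
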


\begin{proof}
Note $W_Y \subset \Adm_{Y_0}$ and $\Adm_{Y_0}$ preserves $\Phi$ by Lemma~\ref{PL} and Lemma~\ref{MonAdmPhi}. 
Now, since 
$$\Phi_Y=\{ \alpha \in \Phi \ | \ \phi_Y(\alpha)=1 \},$$
the group $\Hodge_Y$ preserves $\Phi_Y$ and $W_Y \subset \Hodge_{Y}$ is normal.
The image of $\Aut(Y,D)$ in $\Aut(\Pic(Y_0))$ has trivial intersection with $W_{Y}$,
since it preserves the Weyl chamber $\Nef(Y) \subset \overline{C^{++}_D}$, while the Weyl group
acts simply transitively on the chambers. Take $g \in \Adm_{Y_0}$. Composing
$g$ with an element of $W_Y$ we can assume $g$ preserves the Weyl chamber 
$\Nef(Y)$, and thus $\Delta_{Y}$ (as each $\alpha \in \Delta_Y$ corresponds to a codimension one face of
the chamber). Now $g$ is in the image of $\Aut(Y,D)$ iff it fixes the period point $\phi_Y$
by the global Torelli Theorem. Thus the homomorphism $\Aut(Y,D) \to \Hodge_{Y}/W_Y$ 
is surjective. Now the exactness follows from Proposition~\ref{basicseq}.

Finally, for $Y_e$ the period point equals the identity element of $\Hom(D^{\perp},\bG_m)$, so $\Hodge_{Y_e}=\Adm_{Y_0}$ and $W_{Y_e}=W$ by Proposition~\ref{simplerootsgenerate}.
\end{proof}

\begin{remark}
Note that the description of the automorphism groups of certain Looijenga pairs in \cite{L81}, Corollary~I.5.4 is incorrect as stated.
(The assumption that the automorphism acts trivially on $D^{\perp}$ should be added to the statement. Moreover, the group $\bZ/s\bZ \times \bZ/2\bZ$ in the statement should be replaced by the dihedral group of order $2s$.)
The group $N'$ is trivial in the cases studied by Looijenga so $\Aut(Y,D)=\Hodge_Y/W_Y$. 
\end{remark}

\begin{example} \label{nonfgcone}
We give an example where $\Adm_{Y_0}/W$ is nontrivial (in fact, infinite). 
Let $D$ be a cycle of seven $(-2)$-curves.
Then one can show that 
$\Aut(Y_e,D)$ is infinite. 
(Indeed, since $\cO_{Y_e}(D)|_D \simeq \cO_D$, there is an elliptic fibration $f \colon Y_e \rightarrow \bP^1$ with $f^{-1}(\infty)=D$. 
Moreover, the fibration $f$ is relatively minimal because $K_X=-D$.
So there is an action of the Mordell--Weil group $\MW(f)$ of sections of $f$ on $(Y_e,D)$ given by translation by the section on the smooth fibers of $f$.
Finally, $\MW(f)$ is infinite by \cite{MP86}, Theorem~4.1 (or a short root theoretic calculation, cf. Example~\ref{W_trivial_Adm_infinite} below).)
The group $N'$ is finite because 
$[D_1],\ldots,[D_n] \in \Pic(Y_0)$ are linearly independent. 
Hence $\Adm_{Y_0}/W$ is infinite by Theorem~\ref{exactseqtheorem}. 
\end{example}

We note by way of comparison:

\begin{lemma} \label{looadm} In the cases Looijenga considers in \cite{L81} we have $\Adm_Y=W$.
\end{lemma}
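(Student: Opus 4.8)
\emph{Plan.} I would prove the two inclusions separately. The inclusion $W\subseteq\Adm_Y$ holds for \emph{every} Looijenga pair: $W$ lies in the monodromy group by Lemma~\ref{PL}, and the monodromy group lies in $\Adm_Y$ by Lemma~\ref{MonAdmPhi}; so nothing special about Looijenga's cases enters here. For the reverse inclusion, I would first observe that $\Adm_Y$ is determined by the triple $\bigl(\Pic(Y),\{[D_i]\},C^{++}\bigr)$, which is invariant under deformation by Lemma~\ref{C++lemma}, and likewise $W$ (being built from $\Phi$) is invariant; so by Proposition~\ref{generics} we may assume $(Y,D)$ is generic. Then $\Delta_Y=\emptyset$, $W_Y=1$, and $\Nef(Y)=\overline{C^{++}_D}$ by Lemma~\ref{Nefcones}, so that by Lemma~\ref{AdmLemma} the group $\Adm_Y$ consists exactly of the lattice automorphisms of $\Pic(Y)$ fixing each $[D_i]$ and preserving $\Nef(Y)$. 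The task is then to show every such automorphism lies in $W$.

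Here I would use that in Looijenga's cases the set of roots $\Phi$ carries a root basis $\Delta=\{\alpha_1,\dots,\alpha_r\}$ (as recalled above). Let $\mathcal C_0\subseteq C^{++}_D$ be the chamber $\{x\in C^{++}_D\mid x\cdot\alpha_j\ge 0\text{ for all }j\}$. By the root basis structure (see \cite{L81}, and compare Theorem~\ref{thWeylchambers}), the hyperplanes $\alpha^{\perp}$, $\alpha\in\Phi$, cut $C^{++}_D$ into chambers on which $W$ acts simply transitively, each a fundamental domain. Now given $\theta\in\Adm_Y$: it preserves $\Phi$ (Theorem~\ref{Friedman_roots}, or Lemma~\ref{MonAdmPhi}) and preserves $C^{++}_D$ (since it fixes every $[D_i]$), hence permutes these chambers, and after composing with a unique element of $W$ we may assume $\theta(\mathcal C_0)=\mathcal C_0$. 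Then $\theta$ permutes the walls $\alpha_j^{\perp}$ of $\mathcal C_0$ and so induces an automorphism $\sigma$ of the Dynkin diagram of $\Delta$. If $\sigma=\mathrm{id}$ then $\theta$ fixes $\Delta$, hence the root lattice $\langle\Phi\rangle$, pointwise; since $\theta$ also fixes every $[D_i]$ and preserves the ample cone, a short signature argument shows $\theta$ fixes all of $\Pic(Y)\otimes\QQ$, so $\theta=\mathrm{id}$ and the original $\theta$ lay in $W$. Thus $\theta\mapsto\sigma$ induces an injection $\Adm_Y/W\hookrightarrow\Aut(\text{Dynkin diagram of }\Delta)$, and it remains to show the image is trivial.

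This last point is where I expect the real work to be, and it is the only step that genuinely uses the hypotheses of \cite{L81} ($n\le 5$ and $(D_i\cdot D_j)$ negative semidefinite): one must rule out every nontrivial diagram automorphism of $\Delta$. A diagram automorphism preserves $\Phi$ and $C^{+}$, but in general it need not preserve the finite set $\tilde{\cM}$ of $(-1)$-classes --- equivalently the cone $C^{++}$, cf.\ the counterexamples of Remark~\ref{screwup} --- nor the ordered, oriented boundary $D_1+\dots+D_n$; and already for a cycle of seven $(-2)$-curves one has $\Adm_Y/W$ infinite (Example~\ref{nonfgcone}). For each of the configurations considered by Looijenga I would check directly that any candidate $\sigma$ moves some class of $\tilde{\cM}$, or is otherwise incompatible with fixing the $[D_i]$ and preserving $C^{++}$; this is essentially contained in Looijenga's determination of the corresponding automorphism and monodromy groups in \cite{L81} (see also the corrected form of \cite{L81}, Corollary~I.5.4 discussed above). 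Concluding $\sigma=\mathrm{id}$ in all cases gives $\Adm_Y=W$.
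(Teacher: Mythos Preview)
Your reduction to the generic case and the identification of $\Adm_Y$ with the group of lattice automorphisms fixing the $[D_i]$ and preserving $\Nef(Y)$ (via Lemma~\ref{AdmLemma}) is exactly what the paper does. The difference is in the final step. The paper simply observes that this group is, by definition, Looijenga's Cremona group $\operatorname{Cr}(Y,D)$, and then invokes \cite{L81}, Proposition~I.4.7, which states precisely that $\operatorname{Cr}(Y,D)=W$ in the cases under consideration. That is the entire proof.

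Your proposal instead tries to \emph{reprove} this proposition via a chamber argument and an analysis of Dynkin diagram automorphisms. The scaffolding is reasonable, but at the crucial point --- ruling out nontrivial diagram automorphisms --- you concede that the check is ``essentially contained in Looijenga's determination \ldots\ in \cite{L81}''. So you end up citing Looijenga anyway, just after a longer detour; you have not gained independence from \cite{L81}, I.4.7. If your aim is to give a self-contained argument, the case analysis must actually be carried out.

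One technical caution on your intermediate step: the ``short signature argument'' that $\theta$ fixing each $[D_i]$ and each $\alpha_j$ forces $\theta=\id$ on $\Pic(Y)_\QQ$ is not automatic when $(D_i\cdot D_j)$ is degenerate (the case $D^2=0$). In that case $\langle D_1,\dots,D_n\rangle_\QQ + D^\perp_\QQ$ has codimension one in $\Pic(Y)_\QQ$, so fixing both subspaces pointwise does not pin down $\theta$; you must also use that $\theta$ preserves the ample cone to exclude the residual sign. This is fixable, but deserves a sentence rather than a wave of the hand.
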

\begin{proof}  We use \cite{L81}, Proposition~I.4.7, p.~284.
By definition $\operatorname{Cr}(Y,D)$ is the group of automorphisms of the lattice $\Pic(Y)$ 
preserving the ample cone of $Y$ and the boundary divisors $D_1,\ldots,D_n$. 
We may assume $(Y,D)$ is generic, that is, in Looijenga's notation $B^n = \emptyset$. 
Then $\Adm_Y=\operatorname{Cr}(Y,D) = W$ by Lemma~\ref{AdmLemma} and \cite{L81}, I.4.7.
\end{proof}

\begin{example}\label{Delta_infinite}
We describe an example of a Looijenga pair $(Y'_e,D'_e)$ such that the set $\Delta$ of internal $(-2)$-curves on $Y'_e$ is infinite.

Let $(Y_e,D_e)$ be the Looijenga pair of Example~\ref{nonfgcone}. Then there is an elliptic fibration $f \colon Y_e \rightarrow \bP^1$ with $D=f^{-1}(\infty)$ and such that $f$ has infinitely many sections. 
Each section $C \subset Y_e$ is a $(-1)$-curve (because $-K_{Y_e} \cdot C = D \cdot C = 1$). 
Any two sections meeting the same component of $D$ intersect $D$ at the same point (because $\phi_{Y_e}$ is trivial by definition). 
So there is a point $p$ in the smooth locus of $D_e$ such that there are infinitely many $(-1)$-curves on $Y_e$ passing through $p$. 
Now let $(Y',D')$ denote the blowup of $p \in Y_e$ together with the strict transform of $D_e$. Then clearly $\phi_{Y'}$ is also trivial, so $(Y',D')=(Y'_e,D'_e)$.
Now $Y'$ contains infinitely many internal $(-2)$-curves given by the strict transforms of the $(-1)$-curves in $Y_e$ passing through $p$.
\end{example}

\begin{example}\label{W_trivial_Adm_infinite}
We describe an example of a Looijenga pair $(Y,D)$ such that $W$ is trivial and $\Adm$ is infinite.

Let $(\oY,\oD)$ be the toric Looijenga pair given by $\bF_1$ together with its toric boundary.
Label the boundary divisors $\oD_1,\ldots,\oD_4$ so that $\oD_1^2=-1$, $\oD_2^2=0$, $\oD_3^2=1$, and $\oD_4^2=0$.
Let $(\oY',\oD')$ be the toric pair obtained from $(\oY,\oD)$ by the following sequence of toric blowups. 
We first blowup $\oD_1 \cap \oD_2$, $\oD_2 \cap \oD_3$, and $\oD_3 \cap \oD_4$, then blowup the intersection point of the strict transform of $\oD_4$ with the exceptional divisor over $\oD_3 \cap \oD_4$. 
Now let $(Y,D)$ be the Looijenga pair given by performing an interior blowup at a point of each of the $(-1)$-curves contained in $\oD'$. 
Then $D$ is a cycle of eight $(-2)$-curves. One can check that $D^{\perp}$ does not contain any classes $\alpha$ such that $\alpha^2=-2$. Thus $\Phi= \emptyset$ and $W$ is trivial for $(Y,D)$.
Moreover, choosing the positions of the interior blowups appropriately (so that $(Y,D)=(Y_e,D_e)$), there is an elliptic fibration $f \colon Y \rightarrow \bP^1$ with $f^{-1}(\infty)=D$. 
There are no reducible fibers of $f$ besides $D$ (because $\Phi = \emptyset$ and $f$ is relatively minimal). It follows that the Mordell-Weil group of $f$ is infinite.
(Indeed, writing $\eta \in \bP^1$ for the generic point, the Mordell--Weil group of sections of the elliptic fibration $f$ is given by
$$\MW(f)=\Pic^0(Y_{\eta})=\langle D \rangle^{\perp}/ \langle \Gamma \ | \ f_*\Gamma = 0 \rangle$$
$$= \langle D \rangle^{\perp}/\langle D_1,\ldots,D_8 \rangle.$$
In particular, $\rk \MW(f) = 1$.) Thus the group $\Aut(Y,D)$ is infinite.
Now by Theorem~\ref{exactseqtheorem} we find that $\Adm$ is infinite. 

\end{example}

Recall from Definition~\ref{markingdef} that
if $((Z,D),p_i)$ is a Looijenga pair with marked boundary,
and $\mu: \Pic(Y) \to \Pic(Z)$ is a marking of $\Pic(Z)$, the
marked period point of $((Z,D),p_i,\mu)$ 
is a point in 
\[
T_Y := \Hom(\Pic(Y),\Pic^0(D)).
\]

\begin{construction} \label{universalfamilies} 
\emph{Universal families}. Let $(Y,D)$ be
a Looijenga pair, and $\pi: Y \to \oY$ a toric model, with 
exceptional divisors $\{E_{ij}\}$ which are disjoint interior
$(-1)$-curves. Varying $\phi \in T_Y$, the construction of
Proposition \ref{closetotorelli} produces 
sections 
$p_i: T_Y \to T_Y \times D_i^o \subset T_Y \times \oY$,
and then unique sections $q_{ij}:T_Y \to T_Y \times D_i^o$ such
that 
\[
\phi(E_{ij}) = \cO_D(q_{ij}(\phi))^{-1} \otimes \cO_D(p_i(\phi)) \in 
\Pic^0(D) = \bG_m.
\]
Explicitly, let $p_i$ be the section $\op_i$ of Lemma \ref{toriccasefamilies} (this
involves choosing the right inverse $\gamma$ of $\psi$, but see Remark \ref{sectionindremark}),
then $q_{ij}(\phi) \in \bG_m$ is the point
\[
\phi(E_{ij})^{-1} \cdot p_i(\phi) \in D_i^o,
\]
where $\Pic^0(D)=\Gm$ acts on $D_i^o$ using the convention
of Lemma \ref{pic0D}.

Let $\Pi:(\cY_{\{E_{ij}\}},\cD) \to T_Y \times \oY$ be the iterated
blowup along the sections 
$$
q_{ij} \subset T_Y \times D_i^o \subset T_Y \times \oY.
$$
This comes with a 
marking $\mu: \Pic(Y) \to \Pic(\cY)$ preserving boundary
classes, and sending $E_{ij}$ to the corresponding exceptional
divisor $\cE_{ij}$. 
This induces a marking of $\Pic(Z)$ for each fibre $Z$. We call
$\lambda:(\cY_{\{E_{ij}\}},p_i,\mu) \to T_Y$ a 
{\it universal family}. See Theorem~\ref{modulistacks} for justification of this term.

If $\tau: Y \to Y'$ is a toric blowup, with exceptional divisor $E$, 
and $Y$ has a toric model as above, then there is a divisorial
contraction $\tilde\tau: \cY_{\meij} \to \tcY_{\meij}'$ which blows down
the $(-1)$-curve $\mu(E)$ in each fibre --- this is a family
of toric blowups. Observe that identifying $\Pic(Y)$, $\Pic(Y')$ with
$A_1(Y)$, $A_1(Y')$ respectively, we have a map $\tau_*:A_1(Y)\rightarrow
A_1(Y')$, and hence a transpose map $T_{Y'}=\Hom(A_1(Y'),\Gm)
\rightarrow \Hom(A_1(Y),\Gm)=T_Y$, an inclusion of tori. This identifies
$T_{Y'}$ with the elements of $T_Y$ which take the value $1$
on exceptional divisors of $\tau$.
We define $\lambda':\cY'_{\meij} \to T_{Y'}$ to be the restriction
of $\tcY_{\meij}'$ to $T_{Y'}\subset T_Y$. 
This inherits markings of the boundary and
the Picard group. In this way we have a universal family associated
with each configuration of exceptional curves for a toric model
of some toric blowup.
\end{construction}

\begin{remark}
\label{sectionindremark}
Note in the construction we made a choice of right inverse
$\gamma \colon T_{\oY} \to \Aut^0(D)$ of $\psi$. By Proposition
\ref{basicseq},
any two choices differ by a homomorphism
$h \colon T_{\oY} \to \Aut(\oY,D)$. 
One can check that $h$ together with the action of $\Aut(\oY,D)$ on $\oY$ 
induces a canonical identification of the universal families constructed.
\end{remark}

\begin{remark} There are in general infinitely many universal
families of a given combinatorial type. For a given pair $(Y,D)$
with exceptional divisors $E_{ij}$ for a toric model, the above construction
gives a finite number of families, as there is a choice of order of blowup. 
However, there may be an
infinite number of sets of exceptional divisors of the same combinatorial
type, giving rise to an infinite number of families.
We will see that any two are birational, canonically
identified by a birational map, see Construction-Theorem \ref{biratmap}.
\end{remark}

By construction:
\begin{lemma} \label{inverselemma} For $\phi \in T_Y$, the marked period point of
the fibre $((\cY,\cD),p_i,\mu)_{\phi}$ of 
a universal family is $\phi$. 
\end{lemma}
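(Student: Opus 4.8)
The plan is to read the statement off from Proposition~\ref{closetotorelli}(1). Given $\phi\in\Hom(\Pic(Y),\Pic^0(D))$, that proposition produces a Looijenga pair $(Z,D)$, a marking $p_i$ of $D$, and a marking $\mu\colon\Pic(Y)\to\Pic(Z)$, and asserts that the marked period point of $((Z,D),p_i,\mu)$ defined by \eqref{basiceq} is $\phi$. So it suffices to identify the fibre over $\phi$ of a universal family with this triple. First I would treat a universal family $\lambda\colon(\cY_{\meij},p_i,\mu)\to T_Y$ coming from an honest toric model $\pi\colon Y\to\oY$. By Construction~\ref{universalfamilies}, the fibre over $\phi$ is the iterated blowup of $\oY$ at the points $q_{ij}(\phi)\in D_i^o$, where $p_i(\phi)$ is the point supplied by Lemma~\ref{toriccasefamilies} and $q_{ij}(\phi)$ is determined by $\phi(E_{ij})=\cO_D(q_{ij}(\phi))^{-1}\otimes\cO_D(p_i(\phi))$, while the restriction of $\mu$ to this fibre is the unique lattice isomorphism preserving boundary classes and sending $E_{ij}$ to the class of the corresponding exceptional curve. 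By Lemma~\ref{toriccasefamilies}(2), the points $p_i(\phi)$ satisfy the conclusion of Lemma~\ref{toriccase} for $\bar\phi:=\phi|_{\Pic(\oY)}$, so this triple is exactly the data $((Z,D),p_i,\mu)$ produced by Proposition~\ref{closetotorelli}(1) from $\phi$; hence its marked period point is $\phi$. (The auxiliary choice of right inverse $\gamma$ of $\psi$ used to define $p_i$ is harmless, by Remark~\ref{sectionindremark}.)

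Second, I would treat a universal family $\lambda'\colon\cY'_{\meij}\to T_{Y'}$ coming from a toric blowup $\tau\colon Y\to Y'$ with exceptional divisor $E$, where $Y$ has a toric model as above. Recall that $\cY'_{\meij}$ is the restriction to $T_{Y'}\subset T_Y$ of the family obtained from $\cY_{\meij}$ by contracting the section $\mu(E)$ fibrewise, and that $T_{Y'}$ is embedded in $T_Y$ as the homomorphisms vanishing on the exceptional divisors of $\tau$, via $\psi'\mapsto\psi'\circ\tau_*$; so $\psi'(L')=\phi'(\tau^*L')$ when $\psi'\in T_{Y'}$ corresponds to $\phi'\in T_Y$. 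By the first part, the fibre of $\cY_{\meij}$ over $\phi'$ is a Looijenga pair $(Z,D)$ with marked period point $\phi'\in T_Y$, and the fibre of $\lambda'$ over $\psi'$ is its toric blowdown $(Z',D')$ with the induced markings $p'_i$ of $D'$ and $\mu'\colon\Pic(Y')\to\Pic(Z')$. It then remains to check that \eqref{basiceq} is compatible with this blowdown: evaluating it on $\tau^*L'$ for $L'\in\Pic(Y')$, one uses that $\tau^*L'\cdot\mu(E)=0$, so the marking point on the contracted component drops out of $\sum_i(\tau^*L'\cdot D_i)p_i$, and that $\mu(\tau^*L')|_D$ and $\mu'(L')|_{D'}$ correspond under the canonical identifications $\Pic^0(D)=\Pic^0(D')=\Gm$ of Lemma~\ref{pic0D} (the contracted component carries the trivial bundle, hence contributes $1$ to the product $\prod_i\lambda_i$ computing that identification). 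This yields $\psi'(L')=\phi'(\tau^*L')=\phi_{((Z',D'),p'_i,\mu')}(L')$, which is the claim.

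I expect no substantive difficulty: the lemma is essentially a bookkeeping corollary of Proposition~\ref{closetotorelli}(1). The one place demanding genuine (though routine) care is the compatibility of \eqref{basiceq} with a simple toric blowdown in the second case --- in particular keeping straight the correspondence between the components and marking points of the anticanonical cycle on $Z$ and on $Z'$, and confirming that the contracted boundary $(-1)$-curve contributes trivially both to $L|_D$ and to the zero-cycle $\sum_i(L\cdot D_i)p_i$ on $D$.
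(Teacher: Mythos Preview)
Your proposal is correct and is essentially a careful unpacking of the paper's one-word proof ``By construction'': the paper regards the lemma as immediate from Construction~\ref{universalfamilies}, which was explicitly designed so that the fibre over $\phi$ is the triple $((Z,D),p_i,\mu)$ of Proposition~\ref{closetotorelli}(1). Your treatment of the toric-blowup case is more explicit than anything the paper writes down, but is exactly the routine check implicit in the last paragraph of Construction~\ref{universalfamilies}; the only quibble is terminological---$T_{Y'}\subset T_Y$ consists of homomorphisms taking the value $1$ (not ``vanishing'') on the exceptional class.
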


In particular, the fiber of a universal family $(\cY,\cD)/T_{Y_0}$ over the identity $e \in T_{Y_0}$  is the pair $(Y_e,D)$ defined above.

\begin{corollary} \label{genericlocusunivfam} The locus of generic pairs in a universal family $(\cY,\cD)/T_{Y}$ is the complement in $T_{Y}$ of the countable union of hypertori 
$$T_{\alpha}=\{ \phi \in T_{Y} \ | \ \phi(\alpha)=1 \}$$
for $\alpha \in \Phi$.
\end{corollary}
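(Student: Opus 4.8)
The plan is to combine Lemma~\ref{inverselemma} with the intrinsic genericity criterion of Corollary~\ref{genericlocus}. Fix $\phi \in T_Y$ and let $(Z,D) := (\cY_\phi,\cD_\phi)$ be the fibre of the universal family over $\phi$, equipped with the marking $p_i = p_i(\phi)$ of $D$ and the marking $\mu \colon \Pic(Y) \to \Pic(Z)$ of $\Pic(Z)$ produced in Construction~\ref{universalfamilies}. By Lemma~\ref{inverselemma} the marked period point $\phi_{((Z,D),p_i,\mu)}$ equals $\phi$. The whole content of the corollary is then to translate the condition ``$\phi(\alpha)=1$'' into a statement about the \emph{unmarked} period point $\phi_Z \colon D^{\perp} \to \bG_m$ of $Z$, for which Corollary~\ref{genericlocus} gives exactly the genericity dichotomy.

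First I would record the relation between the marked and unmarked period points on $D^{\perp}$. For $\alpha \in D^{\perp} \subset \Pic(Y)$ one has $\alpha \cdot D_i = 0$ for all $i$; since $\mu$ preserves the boundary classes it is an isometry with $\mu([D_i])=[D_i]$, so $\mu(\alpha) \in D^{\perp} \subset \Pic(Z)$ and the sheaf $\cO_D(\sum (\alpha \cdot D_i)p_i)$ appearing in \eqref{basiceq} is trivial. Hence $\phi(\alpha) = (\mu(\alpha)|_D)^{-1} = \phi_Z(\mu(\alpha))^{-1}$. In particular $\phi(\alpha)=1$ if and only if $\phi_Z(\mu(\alpha))=1$, and the inverse is irrelevant to the defining equation of the hypertorus $T_\alpha$.

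Next I would identify the roots of $Z$ with the $\mu$-image of the roots of $Y$: since $T_Y$ is connected and the exceptional divisors $\cE_{ij}$ of the universal family vary in flat families, $\mu$ coincides with Gauss--Manin parallel transport in $\cY/T_Y$, so $\mu(\Phi) = \Phi_Z$ by the parallel-transport invariance of the set of roots noted after Definition~\ref{rootsdef}; alternatively, $\mu$ preserves the boundary classes and the cone $C^{++}$, hence $C^{++}_D$, so $\mu(\Phi)=\Phi_Z$ by Friedman's characterization of $\Phi$ in Theorem~\ref{Friedman_roots}. Combining this with the previous paragraph and Corollary~\ref{genericlocus}: $(Z,D)$ is generic iff $\phi_Z(\beta)\neq 1$ for all $\beta \in \Phi_Z$, iff $\phi(\alpha)\neq 1$ for all $\alpha \in \Phi$, iff $\phi \notin \bigcup_{\alpha\in\Phi} T_\alpha$. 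Since $\Phi$ is a subset of the finitely generated abelian group $\Pic(Y)$ it is countable, so the non-generic locus is the asserted countable union of hypertori.

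The only point requiring a little care is the identification $\mu(\Phi)=\Phi_Z$ relating roots on the reference pair $(Y,D)$ and on an arbitrary fibre, and I expect this to be the main (though minor) obstacle; it is handled by either of the two arguments above --- recognizing $\mu$ as parallel transport in the connected family $\cY/T_Y$, or appealing to the intrinsic description of $\Phi$ in Theorem~\ref{Friedman_roots} together with the fact that $\mu$ fixes the boundary classes and $C^{++}_D$.
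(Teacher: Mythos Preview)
Your proposal is correct and follows essentially the same approach as the paper, which simply states ``This follows from Corollary~\ref{genericlocus}.'' You have carefully unpacked the implicit steps --- relating the marked period point $\phi$ to the unmarked period point $\phi_Z$ via \eqref{basiceq} on $D^{\perp}$, and identifying $\mu(\Phi)=\Phi_Z$ via parallel transport (or Theorem~\ref{Friedman_roots}) --- that the paper leaves to the reader.
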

\begin{proof}
This follows from Corollary~\ref{genericlocus}. 
\end{proof}

We construct a birational action of $\Adm_{Y_0}$ on a universal family. This action is used to identify $\Adm_{Y_0}$ with the monodromy group, see Theorem~\ref{strong_monodromy}.

\begin{thmconst} \label{biratmap}
Let $(Y_0,D)$ be a generic Looijenga pair.
Let $\meij,\mfij$ be two exceptional configurations for $(Y_0,D)$, not necessarily of the same type. Then
there is a canonical birational map 
$$
\cY_{\meij} \dasharrow \cY_{\mfij},
$$
commuting with the projections to $T_{Y_0}$. This birational
map restricts to an isomorphism over a Zariski open set of $T_{Y_0}$ containing the locus of generic pairs, and respects the markings of the Picard group and the boundary of each fiber over this locus.
\end{thmconst}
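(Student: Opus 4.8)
The plan is to build the birational map fibrewise through a common toric blowup, using the universal families as families of toric pairs. First I would reduce to the case where $\meij$ and $\mfij$ live on the \emph{same} Looijenga pair — more precisely, on a common toric blowup $(Y_0',D') \to (Y_0,D)$ refining both toric models. This is legitimate because by Construction \ref{universalfamilies}, passing to a toric blowup identifies $\cY_{\meij}$ with the restriction of $\cY_{\meij}'$ to the subtorus $T_{Y_0'} \subset T_{Y_0}$, and a rational map defined on a dense subtorus extends uniquely to a rational map of the ambient tori. So it suffices to produce, for a single generic $(Y_0,D)$ with two exceptional configurations $\meij$, $\mfij$ (now allowed to be of the same combinatorial type or not), a canonical birational map $\cY_{\meij} \dashrightarrow \cY_{\mfij}$ over $T_{Y_0}$.

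Next I would handle the case where $\meij$ and $\mfij$ have the \emph{same} combinatorial type. Here Theorem~\ref{cremona} gives a unique $\theta \in \Adm_{Y_0}$ with $\theta([D_i])=[D_i]$ and $\theta([E_{ij}])=[F_{ij}]$. The idea is: over the locus $U \subset T_{Y_0}$ of generic pairs, each fibre $((\cY_{\meij})_\phi, \cD_\phi)$ together with its marking $\mu$ of $\Pic(Y_0)$ is, by Lemma \ref{inverselemma}, the marked Looijenga pair with marked period point $\phi$; composing the marking with $\theta^{-1}$ and invoking the marked version of Corollary \ref{torcor} (or directly Proposition \ref{closetotorelli}(2)) produces an isomorphism of pairs from $(\cY_{\meij})_\phi$ to $(\cY_{\mfij})_{\phi'}$ where $\phi' = \phi \circ \theta^{-1}$; but since $\theta \in \Adm_{Y_0}$ preserves $\Phi$ and fixes the period point structure appropriately — in fact $\theta^{-1}$ acts on $T_{Y_0}$ and one checks the universal family $\cY_{\mfij}$ pulled back along this action is $\cY_{\meij}$ — this gives an isomorphism $\cY_{\meij}|_U \xrightarrow{\sim} \cY_{\mfij}|_U$ over $U$, compatible with markings of the Picard group and boundary. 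Because both total spaces are iterated blowups of $T_{Y_0} \times \oY$ along sections $q_{ij}$, $q_{ij}'$ that are regular functions of $\phi \in T_{Y_0}$ (explicitly $q_{ij}(\phi) = \phi(E_{ij})^{-1} p_i(\phi)$, and likewise for $F_{ij}$, with $\theta$ relating the two), this fibrewise isomorphism over a dense open set extends to a birational map over all of $T_{Y_0}$, regular wherever the relevant sections stay disjoint — in particular on a Zariski open containing $U$.

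For the case where $\meij$ and $\mfij$ are \emph{not} of the same combinatorial type, I would first pass to a common toric blowup where the two configurations become configurations of the same combinatorial type — concretely, blow up further nodes of $D$ so that both toric models factor through a common toric surface, after which the exceptional configurations (enlarged by the new $(-1)$-curves of the toric blowup, and with chains re-grouped) can be matched combinatorially. Here one uses that a simple toric blowup does not change $Y_0 \setminus D$ nor the set of interior $(-1)$-curves in an essential way, and Remark \ref{toric_iso_type} to control the resulting toric pairs; then one applies the same-type case above. Canonicity (independence of all the auxiliary choices: the order of blowups, the common refinement chosen, the section $\gamma$ of $\psi$) follows from Remark \ref{sectionindremark} together with the uniqueness clauses in Theorem~\ref{cremona} and Proposition~\ref{basicseq}, since any two candidate birational maps agree on the dense generic locus $U$ where everything is rigid.

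The main obstacle I anticipate is the bookkeeping in the not-same-combinatorial-type case: one must check that after passing to a common toric blowup the \emph{canonical} identification of universal families from Construction \ref{universalfamilies} (restriction to a subtorus) is compatible with the isomorphism constructed in the same-type case, so that the resulting birational map on $\cY_{\meij}$ and $\cY_{\mfij}$ over the \emph{original} $T_{Y_0}$ does not depend on which common blowup was used. This is where one leans hardest on the uniqueness in Theorem~\ref{cremona} (simple transitivity of $\Adm_{Y_0}$ on configurations of a fixed type) and on the fact that $\Adm_{Y_0} \cong \Adm_{Y_0'}$ canonically under toric blowup, both established above; the actual verification is a diagram chase over the generic locus rather than a genuinely new idea.
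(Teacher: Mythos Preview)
Your proposal has two genuine gaps.

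First, in the same-type case, the detour through $\theta \in \Adm_{Y_0}$ does not produce a map commuting with the projection to $T_{Y_0}$. The tautological fact you invoke --- that $\cY_{\mfij}$ pulled back along the $\theta^{-1}$-action is $\cY_{\meij}$ --- gives an isomorphism covering the \emph{action} of $\theta^{-1}$ on $T_{Y_0}$, and your fibrewise Torelli step likewise sends the fibre over $\phi$ to the fibre over $\phi\circ\theta^{-1}$. These are the same map, not two maps whose composite lives over the identity of $T_{Y_0}$; you have reconstructed the tautological part of Construction~\ref{Waction}, not the birational map of Theorem~\ref{biratmap}. (Indeed, Construction~\ref{Waction} \emph{uses} Theorem~\ref{biratmap} precisely to pass from a map covering the $\theta$-action to one over the base, so invoking that circle of ideas here is circular.) Moreover the resulting map intertwines $\mu_E\circ\theta^{-1}$ with $\mu_F$, not $\mu_E$ with $\mu_F$ as the statement requires.

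Second, and more fatal, your reduction of the different-type case to the same-type case cannot work: a toric blowup $(Y_0',D')\to(Y_0,D)$ introduces new boundary components which are disjoint from every interior $(-1)$-curve, so the combinatorial type of an exceptional configuration is unchanged under toric blowup. Two configurations of different types on $Y_0$ remain of different types on every toric blowup.

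The paper's argument avoids both issues by a direct geometric construction that does not separate into cases by type and never mentions $\Adm_{Y_0}$. Over the open set $U\subset T_{Y_0}$ where each class $\mu_E(F_{ij})$ is represented by an honest $(-1)$-curve in the fibre of $\cY_{\meij}$ (this contains the generic locus by Lemma~\ref{lim=exc_for_gen}, which applies to an arbitrary limiting configuration regardless of type), one contracts these families of curves to obtain a family of \emph{toric} pairs over $U$. This family inherits boundary markings $p_i^E$, and by the uniqueness in Lemma~\ref{toriccase} it is canonically identified with the trivial family $U\times(\oY_F,\oD)$ carrying the markings $p_i^F$ underlying the construction of $\cY_{\mfij}$. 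The blow-up sections $q_{ij}$ then match, so blowing back up gives the desired isomorphism $\cY_{\meij}|_U \cong \cY_{\mfij}|_U$ over $U$, respecting both markings. The case of configurations on toric blowups is then handled by passing to a common toric blowup and restricting to the subtorus --- but only as a bookkeeping device, not to change combinatorial types.
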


\begin{proof}
Suppose first the configurations are on $Y_0$ (rather than on possibly different toric blowups of $Y_0$). 
Let $U \subset T_{Y_0}$ be the maximal open subset such that each $F_{ij} \subset Y_0$  deforms to a family of $(-1)$-curves $\cF_{ij} \subset \cY_{\meij}|_U$ over $U$.
The Zariski open set $U$ contains the locus of generic fibers by Lemma~\ref{lim=exc_for_gen}. Write $\cY'=\cY_{\meij}|_U$.
The $\cF_{ij}$ restrict to an exceptional configuration on each fiber of $\cY'$ over $U$, and we have a birational morphism 
$(\cY',\cD') \rightarrow (\ocY',\ocD')$ given by blowing down these families of curves. Let $ \pi \colon (Y_0,D) \rightarrow (\oY_0,\oD)$ be the toric model obtained by contracting the $F_{ij}$.
Then $(\ocY',\ocD')$ is a fiber bundle over $U$ with fiber $(\oY_0,\oD)$.
Also, $\cY' \to \ocY'$ restricts to an isomorphism $\cD' \rightarrow \ocD'$, so the markings $p^E_i \colon T_{Y_0} \rightarrow \cY_{\meij}$ induce markings of $\ocD' \subset \ocY'$.
Let $p^F_i$ be the sections of the trivial family $U \times (\oY_0,\oD)$ given by Construction~\ref{universalfamilies} for $\cY_{\mfij}$.
By construction, the marked period points of the fibers of the families $((\ocY',\ocD'),p^E_i)$ and $(U \times (\oY_0,\oD),p^F_i)$ over each $\phi \in U$ coincide. 
So, by Lemma~\ref{toriccase}, there is a unique isomorphism $f: ((\ocY',\ocD'),p^E_i) \to (U \times (\oY_0,\oD),p^F_i)$ over $U$.
Each of $\ocY'$ and $U \times \oY$ comes with sections $q_{ij}$ (given in the first case by the images of the exceptional divisors $\cF_{ij}$, in the second by Construction~\ref{universalfamilies} for $\cY_{\mfij}$), 
which are identified under the isomorphism $f$. 
Thus after performing the iterated blow-up of the $q_{ij}$ on $\ocY'$ and $U \times \oY_0$ respectively, $f$ induces an isomorphism $\cY' \rightarrow \cY_{\mfij}|_U$. 
That is, we obtain a birational map $\cY_{\meij} \dasharrow \cY_{\mfij}$ which is an isomorphism over $U$.

If the configurations are on toric blowups of $Y_0$ we
make the obvious modifications: we replace
$Y_0$ by a toric blowup $\tau: Z_0 \to Y_0$ 
on which they both appear and carry out
the above. Then we restrict the birational maps to the 
subtorus $\Hom(\Pic(Y_0),\bG_m) \subset \Hom(\Pic(Z_0),\bG_m)$,
and obtain induced birational maps between the universal
families (which we recall are obtained from the restricted families
via the families of toric blowdowns determined by $\tau$).
\end{proof}

\begin{construction}
\label{Waction}
Let $(Y_0,D)$ be a generic Looijenga pair.
Observe that
$\Aut(\Pic(Y_0))$ acts by precomposition on $T_{Y_0}$:
$$
g(\phi) := \phi \circ g^{-1}.
$$
If $g$ is admissible and $\{E_{ij}\}$ is an exceptional collection,
then $\{g(E_{ij})\}$ is an exceptional collection necessarily of
the same combinatorial type as $\{E_{ij}\}$. This induces, by
the construction of the universal families, 
a commutative diagram
$$
\begin{CD}
\cY_{\meij} @>>> \cY_{\{g(E_{ij})\}} \\
@VVV  @VVV \\
T_{Y_0} @>{\phi \mapsto \phi \circ g^{-1}}>> T_{Y_0} 
\end{CD}
$$
where the horizontal maps are isomorphisms. Composing
$\cY_{\meij}\rightarrow \cY_{\{g(E_{ij})\}}$ with the canonical birational map 
$\cY_{\{g(E_{ij})\}}\dasharrow \cY_{\meij}$ gives a birational map
$$
\psi_g: \cY_{\meij} \dasharrow \cY_{\meij}
$$
which is an isomorphism over a Zariski open set containing the locus of generic pairs. 
In particular this gives a canonical action of $\Adm_{Y_0}$ on $\cY_{\meij}$
by birational automorphisms. By construction the composition 
$$
\begin{CD}
\Pic(Y_0) @>{r^{-1}}>> \Pic(\cY_{\meij}) @>{\psi_{g*}}>> \Pic(\cY_{\meij})
@>{r}>> \Pic(Y_0)
\end{CD}
$$
is $g \in \Aut(\Pic(Y_0))$ (here $r$ is the restriction).
\end{construction}

\begin{example}
Consider the pair $(Y,D)$ obtained by blowing up one general point
on each coordinate axis of $\PP^2$, with $D$ the proper transform
of the toric boundary of $\PP^2$. Write the generators of $\Pic(Y)$
as $L,E_1,E_2,E_3$ with $L$ the pull-back of a line in $\PP^2$
and the $E_i$'s the exceptional divisors. Then $\{E_1,E_2,E_3\}$
is an exceptional configuration, as is $\{F_1,F_2,F_3\}$
where $F_i=(L-E_1-E_2-E_3)+E_i$.
We obtain universal families $\cY_{\meij},\cY_{\mfij}\rightarrow
T_Y$. The birational map constructed above $f:\cY_{\meij}\dasharrow
\cY_{\mfij}$ is an isomorphism away from the locus where the three
blown-up points lie on a line $\oL$. Over such a point, 
the curve of class $F_i$ decomposes as a union of irreducible
curves of class $\alpha := L-E_1-E_2-E_3$ and $E_i$.
The curve of class $\alpha$ is the proper transform of $\oL$
and is common to all three curves, hence the three curves cannot be
simultaneously contracted. The proper transform of $\oL$ must be flopped
before this contraction can be performed.

Note in this example that $D^{\perp}$ is generated by $\alpha$, and 
$\Phi=\{\pm \alpha\}$. The reflection $s_{\alpha}$ satisfies
$s_{\alpha}(E_i)=F_i$. It is an admissible automorphism, and $W=\{\id, 
s_{\alpha}\}$. Since the only non-trivial automorphism which preserves
the boundary classes and the intersection pairing is $s_{\alpha}$, it is
clear that $W=\Adm_Y$.
\end{example}

Using the construction of universal families together with the $\Adm_{Y}$ action, we show that $\Adm_{Y}$ is equal to the monodromy group.

\begin{theorem} \label{strong_monodromy}
Let $(Y,D)$ be a Looijenga pair. The group $\Adm_Y$ is the monodromy group in the following sense.
Let $(\cY,\cD) \rightarrow S$ be a family of Looijenga pairs over a connected base $S$ together with a point $s \in S$ and an identification 
$$(Y,D) \stackrel{\sim}{\longrightarrow} (\cY_s,\cD_s).$$
Then the monodromy map
$$\rho \colon \pi_1(S,s) \rightarrow \Aut(\Pic(Y))$$
has image contained in $\Adm_Y$. 
Furthermore, in the analytic category, there exists a family as above such that $S$ is smooth and the image of $\rho$ is equal to $\Adm_Y$.
\end{theorem}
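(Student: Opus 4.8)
\emph{Plan.} The first assertion is essentially Lemma~\ref{MonAdmPhi}: any monodromy operator preserves the intersection pairing (topological invariance of $H^2$), fixes each $[D_i]$ (the $\cD_i$ form flat families of divisors), and preserves $C^{++}$ by the deformation invariance of Lemma~\ref{C++lemma}, hence lies in $\Adm_Y$. The real content is the second assertion, and the plan is to realize $\Adm_Y$ as a monodromy group by quotienting a universal family by the group $\Adm_{Y_0}$ of a deformation-equivalent \emph{generic} pair $(Y_0,D)$, and then gluing in the prescribed fibre.

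First I would reduce the construction to the generic case: by Proposition~\ref{generics} the given $(Y,D)$ is deformation equivalent to a generic pair $(Y_0,D)$, and parallel transport gives a lattice isomorphism $\Pic(Y_0)\cong\Pic(Y)$ carrying $[D_i]$ to $[D_i]$ and $C^{++}$ to $C^{++}$ (Lemma~\ref{C++lemma}), hence conjugating $\Adm_{Y_0}$ onto $\Adm_Y$. Fix a toric model of a toric blow-up of $(Y_0,D)$ and the corresponding universal family $\lambda\colon\cY_{\{E_{ij}\}}\to T_{Y_0}$ of Construction~\ref{universalfamilies}; by Lemma~\ref{inverselemma}, after choosing the markings appropriately, $(Y,D)$ is the fibre over its marked period point $\phi_Y\in T_{Y_0}$. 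By Corollary~\ref{genericlocusunivfam} the generic locus $T^{\mathrm{gen}}\subset T_{Y_0}$ is the complement of the countable union $\bigcup_{\alpha\in\Phi}T_\alpha$; since $\Adm_{Y_0}$ preserves $\Phi$ (Lemma~\ref{MonAdmPhi}) it preserves $T^{\mathrm{gen}}$, and by Construction~\ref{Waction} the birational self-maps $\psi_g$ ($g\in\Adm_{Y_0}$) are biregular over $T^{\mathrm{gen}}$ and induce the tautological action on the Picard group of every fibre. Thus $\Adm_{Y_0}$ acts holomorphically and biregularly on $\cY_{\{E_{ij}\}}|_{T^{\mathrm{gen}}}\to T^{\mathrm{gen}}$.

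Next I would pass to a quotient. Using the absolute-value map $|\cdot|\colon T_{Y_0}=\Hom(\Pic(Y_0),\Gm)\to\Hom(\Pic(Y_0),\RR_{>0})=\Pic(Y_0)\otimes\RR$, consider the $\Adm_{Y_0}$-invariant ``tube domain'' of points $\phi$ with $|\phi|\in C^{++}$. Since $\Adm_{Y_0}$ is a discrete subgroup of the orthogonal group of the Lorentzian lattice $\Pic(Y_0)$ it acts properly discontinuously on the convex cone $C^{++}$ (cf.\ \cite{D08}, as for hyperbolic reflection groups), and the fibres of $|\cdot|$ are compact real tori, so $\Adm_{Y_0}$ acts properly discontinuously on the tube domain. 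Intersecting with $T^{\mathrm{gen}}$ and deleting the (countably many, proper, closed, analytic) fixed-point loci $\Fix(g)$ of the non-identity $g\in\Adm_{Y_0}$, I obtain a connected complex manifold $S$, invariant under $\Adm_{Y_0}$, on which the action is free and properly discontinuous. Then $B:=S/\Adm_{Y_0}$ is a connected complex manifold, $S\to B$ is a covering with deck group $\Adm_{Y_0}$, and $\cY_{\{E_{ij}\}}|_S\to S$ descends to a family $\bar\cY\to B$ of generic Looijenga pairs whose monodromy group is exactly $\Adm_{Y_0}$: the connecting map $\pi_1(B)\to\Adm_{Y_0}$ is onto and, by the last sentence of Construction~\ref{Waction}, is carried by the monodromy representation to $\Adm_{Y_0}\subset\Aut(\Pic)$, while the reverse inclusion is the first assertion applied to $\bar\cY\to B$.

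Finally I would bring in the prescribed fibre $(Y,D)=\cY_{\phi_Y}$. Choose a small ball $\Delta_q\subset S$ around a point $q\in S$ with $\overline{\Delta_q}\subset S$ small enough that $\Delta_q$ embeds into $B$, and a contractible open $\Gamma\subset T_{Y_0}$ containing both $\phi_Y$ and $\Delta_q$ (a thickened path). Over $\Delta_q$ the families $\cY_{\{E_{ij}\}}|_\Gamma$ and $\bar\cY|_{\Delta_q}$ agree by construction, so gluing $\cY_{\{E_{ij}\}}|_\Gamma\to\Gamma$ to $\bar\cY\to B$ along $\Delta_q$ produces a family $\hat\cY\to\hat S$ of Looijenga pairs over a connected complex manifold $\hat S$, with $(Y,D)$ the fibre over $\phi_Y$. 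By van Kampen $\pi_1(\hat S)=\pi_1(B)$ (both $\Gamma$ and $\Delta_q$ are contractible), so its monodromy group is $\Adm_{Y_0}=\Adm_Y$, completing the proof together with the first assertion. The main obstacle is the quotient step: because $\Adm_{Y_0}$ can be strictly larger than the Weyl group $W$, and even infinite modulo $W$ (Example~\ref{nonfgcone}), one must genuinely exploit the hyperbolic geometry of the Lorentzian lattice $\Pic(Y_0)$ to get a properly discontinuous action on an invariant open subset of $T^{\mathrm{gen}}$; and the related fact that the stabilizer $\Hodge_Y$ of the prescribed fibre may be infinite (Example~\ref{W_trivial_Adm_infinite}) is precisely why $(Y,D)$ cannot in general be placed over a point of $B$ itself and the gluing in the last step is needed.
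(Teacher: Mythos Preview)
Your approach is essentially the paper's: restrict a universal family to a tube domain in $T_{Y_0}$ via projection to a cone in $\Pic(Y_0)_{\RR}$, use proper discontinuity of the $\Adm$-action there to remove fixed loci, and take the quotient. The paper uses $C^+$ and the imaginary-part projection; your absolute-value map is the same thing up to sign. Two small technical points: use $C^+$ (as the paper does) or $\Int(C^{++})$ rather than $C^{++}$, since the latter is cut out in $C^+$ by \emph{closed} conditions $x\cdot E\ge 0$ and need not be open; and your separate intersection with $T^{\mathrm{gen}}$ is both unnecessary and problematic, since $T^{\mathrm{gen}}$ is the complement of a countable (not locally finite) union of hypertori and hence not open. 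The point is that every $T_\alpha=\Fix(s_\alpha)$ with $s_\alpha\in W\subset\Adm$, so deleting all $\Fix(g)$ --- which \emph{is} a locally finite union inside the tube domain by proper discontinuity, as the paper notes --- already lands you in $T^{\mathrm{gen}}$.

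Your final gluing step is a careful addition the paper omits. The paper's quotient family $(\cY_U,\cD_U)\to U=\Omega^o/\Adm$ has only \emph{generic} fibres, so if the given $(Y,D)$ is not generic it does not literally sit over a point of $U$; the paper's proof thus establishes that $\Adm_Y$ equals the monodromy group in the sense defined before Lemma~\ref{PL}, but does not quite produce ``a family as above'' in the literal sense of the theorem when $(Y,D)$ is non-generic. Your van Kampen splice of a contractible patch of the universal family containing the fibre $(Y,D)$ onto the quotient $B$ along a small ball resolves this cleanly. To make ``$(Y,D)$ is the fibre over $\phi_Y$'' hold on the nose, you should take the toric model of a toric blow-up of $(Y,D)$ itself (Lemma~\ref{tmexists} and Proposition~\ref{closetotorelli}(2)) and let $(Y_0,D)$ be a generic fibre of the resulting universal family, rather than starting from an abstract generic $(Y_0,D)$.
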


\begin{proof}
We have already established that the monodromy group of any family is contained in $\Adm_Y$. See Lemma~\ref{MonAdmPhi}. It remains to show that there is a family with smooth base and monodromy group equal to $\Adm_Y$.

Let $\lambda \colon (\cY,\cD) \rightarrow S$ be a choice of universal family, with marking
$$\mu \colon \Pic(Y) \times S \rightarrow R^2\lambda_*\bZ$$
and action $\psi$ of $\Adm_Y$. Here $S=\Hom(\Pic(Y),\bG_m)$.
Let $A \subset \Pic(Y)_{\bR}$ be a connected open cone on which $\Adm_Y$ acts properly discontinuously. For example we can take $A=C^+$. 
Working in the analytic topology, let $\Omega \subset S$ denote the \emph{tube domain} associated to $A$. That is
$$\Omega=(\Pic(Y)_{\bR} + iA)/\Pic(Y) \subset (\Pic(Y) \otimes_{\bZ} \bC) / \Pic(Y) = S,$$
an open analytic subset of $S$. (Here we have used the identification $\Pic(Y)=\Pic(Y)^*$ given by Poincar\'e duality.)
Then $\Adm_Y$ acts properly discontinuously on $\Omega$. (Indeed, the real analytic morphism
$$\Omega \rightarrow A$$
given by projection onto the imaginary part is $\Adm_Y$ equivariant and $\Adm_Y$ acts properly discontinuously on $A$.)
Let
$$\Omega^o := \Omega \setminus \bigcup_{g \in \Adm_Y} \Fix(g)$$
denote the complement of the fixed loci of the elements of $\Adm_Y$. Note that $$\bigcup_{g \in \Adm_Y} \Fix(g) \subset \Omega$$ is a locally finite union of analytic subvarieties because the action is properly discontinuous. 
Hence $\Omega^o \subset \Omega$ is a connected open analytic subset. Note also that $\Omega^o$ is contained in the locus of generic pairs. 
Indeed, the locus of generic pairs is the complement of union of the hypertori $T_{\alpha}$ for $\alpha \in \Phi$ by Corollary~\ref{genericlocusunivfam}, 
and $T_{\alpha}=\Fix(s_{\alpha})$ where $s_{\alpha} \in W \subset \Adm_Y$ is the reflection in the root $\alpha$.
Let
$$U:=\Omega^o/\Adm_Y$$
be the quotient of $\Omega^o$ by $\Adm_Y$, a complex analytic manifold.
Let $(\cY_U,\cD_U) \rightarrow U$ be the family of Looijenga pairs over $U$ given by the quotient of the restriction of the family $(\cY,\cD) \rightarrow S$ to $\Omega^o$. 
(Note that the birational action of $\Adm_Y$ on the universal family is biregular over the locus of generic pairs and hence over $\Omega^o$. See Construction~\ref{Waction}.)
Let $t \in \Omega^o$ be a basepoint, and  
$u \in U$ the image of $t$. The Galois covering map $\Omega^o \rightarrow U$ with group $\Adm_Y$ corresponds to a surjection
$$\pi_1(U_,u) \rightarrow \Adm_Y.$$
Given $g \in \Adm_Y$, let $[\gamma] \in \pi_1(U,u)$ be a lift of $g$. Then $\gamma$ is a loop based at $u \in U$ which lifts to a path $\tilde{\gamma}$ in $\Omega^o$ from $t$ to $g^{-1}t$. 
Now the monodromy transformation associated to the loop $\gamma$ for the family $(\cY_U,\cD_U) \rightarrow U$ is identified with 
$g \in \Adm_Y \subset \Aut(\Pic(Y))$ via the marking isomorphism
$$\mu_t \colon \Pic(Y) \stackrel{\sim}{\longrightarrow} \Pic(\cY_t).$$ 
\end{proof}

\section{Moduli stacks}

We give a complete description of the moduli stacks of Looijenga pairs, with and without markings. Note that these stacks are highly non-separated in general. The situation is very similar to that of moduli of K3 surfaces without polarization, cf. \cite{LP80}, \S10.

We work in the analytic category. The stacks we define are stacks over the category of analytic spaces. 

Fix a Looijenga pair $(Y_0,D)$. Let $\tilde{\mathbb{M}}_{Y_0}$ denote the moduli stack of families of Looijenga pairs $(Y,D)$ together with a marking of $D$ and a marking of $\Pic(Y)$ by $\Pic(Y_0)$. More precisely, for an analytic space $S$, the objects of the category $\tilde{\mathbb{M}}_{Y_0}(S)$ are morphisms
$$\lambda \colon (\cY,\cD=\cD_1+\cdots+\cD_n) \rightarrow S$$
together with an isomorphism
$$\mu \colon \Pic(Y_0) \times S \stackrel{\sim}{\longrightarrow} R^2\lambda_*\bZ$$
and sections
$$p_i \colon S \rightarrow \cD_i$$
of $\cD_i \rightarrow S$
such that 
\begin{enumerate}
\item $\cY/S$ is a flat family of surfaces,
\item $\cD_i$ is a Cartier divisor on $\cY/S$ for each $i$, and
\item Each closed fiber $(\cY_s,\cD_s,\mu_s,\{p_i(s)\})$ is a Looijenga pair together with marking $\mu_s \colon \Pic(Y_0) \rightarrow \Pic(\cY_s)$ of the Picard group and marking $p_i(s) \in \cD_{i,s}$ of the boundary. 
\end{enumerate}
Furthermore, in the cases $n=1$ or $2$, we assume given an orientation of $\cD$, that is, an identification
$$\bZ \times S \stackrel{\sim}{\longrightarrow} R^1\lambda_*\bZ_{\cD}.$$
The morphisms in the category from $(\cY,\cD)/S$ to $(\cY',\cD')/S'$ over a morphism $S \rightarrow S'$ are isomorphisms
$$(\cY,\cD) \stackrel{\sim}{\longrightarrow} (\cY',\cD') \times_{S'} S$$
over $S$ compatible with the markings and the orientation.

Similarly, let $\tilde{\mathbb{M}}'_{Y_0}$ denote the moduli stack of Looijenga pairs $(Y,D)$ together with a marking of $\Pic(Y)$ by $\Pic(Y_0)$, 
$\mathbb{M}_{Y_0}$ the moduli stack of Looijenga pairs with a marking of $D$, and $\mathbb{M}'_{Y_0}$ the moduli stack of Looijenga pairs.

We have the period mapping
$$\tilde{\mathbb{M}}_{Y_0} \rightarrow T_{Y_0}$$
$$((Y,D),\mu,\{p_i\}) \mapsto \phi_Y.$$
(Note: If $(\cY,\cD)/S$ is an object of $\tilde{\mathbb{M}}_{Y_0}$ then the sections $p_i$ and the orientation of $\cD$ determine a canonical isomorphism 
$$D \times S \stackrel{\sim}{\longrightarrow} \cD.$$
This is used to define the period mapping for families over an arbitrary base $S$.)
Similarly, writing $T_{Y_0}'=\Hom(D^{\perp},\bG_m)$, we have period mappings
$$\tilde{\mathbb{M}}'_{Y_0} \rightarrow T_{Y_0}',$$
$$\mathbb{M}_{Y_0} \rightarrow [T_{Y_0}/\Adm],$$
and
$$\mathbb{M}'_{Y_0} \rightarrow [T_{Y_0}'/\Adm].$$
(Here for a group $G$ acting on an analytic space $X$ we write $[X/G]$ for the stack quotient. We also write $\Adm=\Adm_{Y_0}$ for brevity.)

Let $\Sigma$ denote the set of connected components of the complement
$$C^{++}_D \setminus \bigcup_{\alpha \in \Phi} \alpha^{\perp}.$$ 
(So $\Sigma$ is permuted simply transitively by the Weyl group $W$.)
Let $U=T_{Y_0} \times \Sigma$. Define an \'etale equivalence relation $R$ on $U$ as follows: $(p,\sigma) \sim (p,\sigma')$ iff $\sigma$ and $\sigma'$ are contained in the same connected component of $C^{++}_D \setminus \bigcup_{\alpha \in \Phi_p} \alpha^{\perp}$, where
$$\Phi_p := \{\alpha \in \Phi \ | \ p(\alpha)=1  \}.$$
Let $\tilde{T}_{Y_0}$ denote the analytic space $U/R$. 
Thus we have an \'etale morphism $\tilde{T}_{Y_0} \rightarrow T_{Y_0}$ given by the first projection $U= T_{Y_0} \times \Sigma \rightarrow T_{Y_0}$, which is an isomorphism over the open analytic set $T_{Y_0} \setminus \overline{\bigcup_{\alpha \in \Phi} (p(\alpha)=1)}$. Note that $\tilde{T}_{Y_0}$ is not separated if $\Phi \neq \emptyset$.
We define $\tilde{T}_{Y_0}' \rightarrow T_{Y_0}'$ similarly. That is, $\tilde{T}_{Y_0}'=U'/R'$ where $U'=T_{Y_0}' \times \Sigma$ and $R'$ is defined by the same rule as above.

Recall the action of $\Aut^0(D)=\bG_m^n$ on $$T_{Y_0}=\Hom(\Pic(Y_0),\bG_m)=\Hom(\Pic(Y_0),\Pic^0(D))$$ from Lemma~\ref{funny}.

Let $K=\Hom(N',\bG_m)$ where $N'$ is the group defined in Theorem~\ref{torelliI}. Every object $(\cY,\cD)/S$ of $\tilde{\mathbb{M}}'_{Y_0}(S)$ has a canonical subgroup $K \times S$ of its automorphism group. (These are the automorphisms acting trivially on the Picard group of the fibers). See Theorem~\ref{exactseqtheorem}. Let $\tilde{\mathbb{M}}''_{Y_0}$ denote the rigidification of $\tilde{\mathbb{M}}'_{Y_0}$ along $K$ in the sense of \cite{ACV05}, \S 5. (Thus the objects of $\tilde{\mathbb{M}}''_{Y_0}$ and $\tilde{\mathbb{M}}'_{Y_0}$ coincide locally, but the automorphism group in $\tilde{\mathbb{M}}''_{Y_0}$ is the quotient of the automorphism group in $\tilde{\mathbb{M}}'_{Y_0}$ by $K$.) Similarly let $\mathbb{M}''_{Y_0}$ denote the rigidification of $\mathbb{M}'_{Y_0}$ along $K$. (Note that the monodromy group $\Adm$ acts trivially on $K$.) The period mappings $\tilde{\mathbb{M}}'_{Y_0} \rightarrow T_{Y_0}'$ and $\mathbb{M}'_{Y_0} \rightarrow [T_{Y_0}'/\Adm]$ descend to maps $\tilde{\mathbb{M}}''_{Y_0} 
\rightarrow T_{Y_0}'$ and $\mathbb{M}''_{Y_0} \rightarrow [T_{Y_0}'/\Adm]$.

\begin{theorem}\label{modulistacks}
We have identifications $$\tilde{\mathbb{M}}_{Y_0}=\tilde{T}_{Y_0},$$ $$\tilde{\mathbb{M}}'_{Y_0}=[\tilde{T}_{Y_0}/\Aut^0(D)],$$ $$\mathbb{M}_{Y_0}=[\tilde{T}_{Y_0}/\Adm],$$ $$\mathbb{M}'_{Y_0}=[\tilde{T}_{Y_0}/\Aut^0(D) \times \Adm],$$ $$\tilde{\mathbb{M}}''_{Y_0}=\tilde{T}_{Y_0}',$$ and
$$\mathbb{M}''_{Y_0}=[\tilde{T}_{Y_0}'/\Adm],$$ 
compatible with the period mappings.
\end{theorem}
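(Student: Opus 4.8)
The plan is to establish the first identification $\tilde{\mathbb{M}}_{Y_0}=\tilde{T}_{Y_0}$ with care and then to deduce the other five formally, by forgetting markings (stack quotients by $\Aut^0(D)$ and by $\Adm$) and rigidifying along $K=\Hom(N',\bG_m)$. Everything is done in the analytic category. Throughout, for $\phi\in T_{Y_0}$ I write $\Phi_\phi=\{\alpha\in\Phi\,|\,\phi(\alpha)=1\}$, matching the $\Phi_p$ in the definition of $\tilde{T}_{Y_0}$.

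\emph{The isomorphism $\tilde{\mathbb{M}}_{Y_0}=\tilde{T}_{Y_0}$.} First I build a morphism $P\colon\tilde{\mathbb{M}}_{Y_0}\to\tilde{T}_{Y_0}$ over $T_{Y_0}$ refining the period mapping. To $((\cY,\cD),\mu,\{p_i\})/S$ I attach its period mapping $S\to T_{Y_0}$ (defined using the canonical trivialization $D\times S\xrightarrow{\sim}\cD$ coming from the $p_i$ and the orientation) together with, for each $s$, the point of the fibre of $\tilde{T}_{Y_0}\to T_{Y_0}$ over the period of $\cY_s$ recording the nef cone: by Lemma~\ref{Nefcones} the cone $\mu_s^{-1}(\Nef(\cY_s))$ is the closure of a connected component of $C^{++}_D\setminus\bigcup_{\alpha\in\Phi_{\phi_{\cY_s}}}\alpha^{\perp}$. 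That this is a morphism, not just a map on points, follows from the \emph{local universal property} of Construction~\ref{universalfamilies}: those families are versal at every fibre — their period mapping is the identity on $T_{Y_0}$ by Lemma~\ref{inverselemma}, and by Proposition~\ref{generics}(1) together with the exact sequence of Proposition~\ref{basicseq} the period mapping of the versal deformation of a pair with marked boundary is a local analytic isomorphism onto a neighbourhood in $T_{Y_0}$ — so every object of $\tilde{\mathbb{M}}_{Y_0}(S)$ is Zariski-locally the pullback along its period mapping of some universal family $\cY_{\meij}\to T_{Y_0}$, and for such a pullback the nef-cone assignment is visibly the composite of the period mapping with one of the étale charts $T_{Y_0}\times\{\sigma\}\to\tilde{T}_{Y_0}$ (using that a limiting configuration of $(-1)$-curves on a fibre deforms, being rigid and unobstructed, to $(-1)$-curves on all nearby fibres). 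The morphism $P$ is then automatically étale, because $\tilde{\mathbb{M}}_{Y_0}\to T_{Y_0}$ is a local isomorphism (a marking of $D$ is rigidifying: an automorphism of a pair fixing one point of the smooth locus of each boundary component is the identity, by the description of $\ker(\Aut(Y,D)\to\Aut(\Pic(Y)))$ as a subgroup of $\Aut^0(D)$ in Proposition~\ref{basicseq}) and $\tilde{T}_{Y_0}\to T_{Y_0}$ is étale by construction. Finally $P$ is bijective on points: over $\phi$, the fibre of $\tilde{\mathbb{M}}_{Y_0}$ is the set of marked pairs with marked boundary and marked period $\phi$, which by the global Torelli theorem (Theorem~\ref{torelliI}) is classified by the extra datum $\mu^{-1}(\Delta_Y)$, a simple system for $\Phi_\phi$; by Proposition~\ref{simplerootsgenerate} and Theorem~\ref{thWeylchambers} these simple systems biject with the $\Phi_\phi$-chambers in $C^{++}_D$, i.e. with the fibre of $\tilde{T}_{Y_0}$ over $\phi$, surjectivity of $P$ using Proposition~\ref{closetotorelli} and Theorem~\ref{cremona}. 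An étale bijection of analytic spaces is an isomorphism. (Equivalently one constructs the inverse directly, by gluing the universal families $\cY_{\meij}$ attached to the various exceptional configurations over the étale charts of $\tilde{T}_{Y_0}$ via the canonical birational maps of Construction-Theorem~\ref{biratmap} and the $\Adm_{Y_0}$-action of Construction~\ref{Waction}; the content is that such a birational map is biregular over the fibre at $p$ exactly when the two configurations determine the same $\Phi_p$-chamber, which is the relation $R$.)

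\emph{The remaining identifications.} For a fixed pair the boundary markings form a torsor under $\Aut^0(D)$ and the $\Pic$-markings a torsor under $\Adm_{Y_0}$, and these are independent: the two induced actions on $T_{Y_0}$ commute, since $\Adm$ fixes the $[D_i]$ and the intersection form, so the character $\psi(\alpha)$ of Lemma~\ref{funny} is $\Adm$-invariant; moreover $\Adm$ permutes $\Phi$ (Theorem~\ref{Friedman_roots}, Lemma~\ref{MonAdmPhi}), hence $\Sigma$ and $R$, while $\Aut^0(D)$ acts trivially on the $\Sigma$-factor of $U=T_{Y_0}\times\Sigma$ and preserves each $\Phi_p$ (as $\psi(\alpha)|_{D^{\perp}}=1$), so both groups act on $\tilde{T}_{Y_0}$. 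Forgetting the respective markings therefore gives $\tilde{\mathbb{M}}'_{Y_0}=[\tilde{T}_{Y_0}/\Aut^0(D)]$, $\mathbb{M}_{Y_0}=[\tilde{T}_{Y_0}/\Adm]$ and $\mathbb{M}'_{Y_0}=[\tilde{T}_{Y_0}/(\Aut^0(D)\times\Adm)]$, the point stabilizer of $\Aut^0(D)$ being $\ker\psi=K$ by Proposition~\ref{basicseq} (consistent with the automorphism groups of marked pairs computed in Theorem~\ref{exactseqtheorem}). Since $K=\ker\psi$ acts trivially on $\tilde{T}_{Y_0}$ and $\Adm$ acts trivially on $K$, rigidifying along $K$ replaces $\Aut^0(D)$ by $\Aut^0(D)/K\cong\im\psi$; as $T_{Y_0}/\im\psi=\Hom(D^{\perp},\Pic^0(D))=T'_{Y_0}$ and $\Phi_p$ depends only on $p|_{D^{\perp}}$, we get $\tilde{T}_{Y_0}/\im\psi=\tilde{T}'_{Y_0}$, whence $\tilde{\mathbb{M}}''_{Y_0}=\tilde{T}'_{Y_0}$ and $\mathbb{M}''_{Y_0}=[\tilde{T}'_{Y_0}/\Adm]$. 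Compatibility with the period mappings is immediate from Lemma~\ref{inverselemma} at each stage.

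\emph{Main obstacle.} I expect the genuinely delicate point to be inside the first step: that $P$ is a morphism of analytic spaces and, equivalently, that the non-separatedness of $\tilde{\mathbb{M}}_{Y_0}$ is governed precisely by the relation $R$. This rests on two facts that need real justification: (i) the local universal property, i.e. that the families of Construction~\ref{universalfamilies} are versal at every fibre — this uses Lemma~\ref{inverselemma}, Proposition~\ref{generics}(1), and the fact that parallel transport carries along both markings, so that an arbitrary family is locally a pullback; and (ii) the behaviour of the nef cone of $\cY_{\meij}\to T_{Y_0}$ near a non-generic point — that the limiting configuration $\meij$ remains a configuration of disjoint $(-1)$-curves in a neighbourhood, so the nef cone tracks exactly the $\Phi_p$-chamber predicted by $R$ (an essentially equivalent statement is the description, in the proof of Construction-Theorem~\ref{biratmap}, of the open locus over which the birational map is biregular, together with Lemma~\ref{lim=exc_for_gen}). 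The group-theoretic bookkeeping in the last paragraph — commuting actions, stabilizers equal to $K$, the rigidification computations — is routine once Proposition~\ref{basicseq} and Theorem~\ref{exactseqtheorem} are in hand.
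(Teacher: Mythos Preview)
Your proposal is correct and follows essentially the same approach as the paper: define the lift $P$ to $\tilde{T}_{Y_0}$ via the nef-cone chamber, use Proposition~\ref{basicseq} to see the stack is a space, étaleness over $T_{Y_0}$ (via \cite{L81}, II.2.5, equivalently Proposition~\ref{generics}(1)) to get étaleness of $P$, and global Torelli plus the universal families for bijectivity; the remaining identifications are then formal quotients and the rigidification computation with the exact sequence for $K$. You are more explicit than the paper about why $P$ is a morphism and why the $\Aut^0(D)$- and $\Adm$-actions on $\tilde{T}_{Y_0}$ commute, and your ``main obstacle'' correctly flags the only genuinely delicate point, but the overall architecture is the same.
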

\begin{proof}
The identification $$\tilde{\mathbb{M}}_{Y_0} \stackrel{\sim}{\longrightarrow} \tilde{T}_{Y_0}$$ is obtained as follows.
Given $((\cY,\cD)/S,\mu,\{p_i\}) \in \tilde{\mathbb{M}}_{Y_0}(S)$ we have the associated period mapping $\phi \colon S \rightarrow T_{Y_0}$, see Definition~\ref{markingdef}(3).
We define a lift $\tilde{\phi} \colon S \rightarrow \tilde{T}_{Y_0}$ of $\phi$ by
$$\tilde{\phi}(s)=(\phi(s), \sigma)$$
where $\sigma \subset \mu_s^{-1}(\Nef(\cY_s))$, for each $s \in S$. 
Note that $\mu_s^{-1}(\Nef(\cY_s))$ is the closure of a connected component of $C^{++}_D \setminus \bigcup_{\alpha \in \Phi_p} \alpha^{\perp}$ by Lemma~\ref{Nefcones} and Theorem~\ref{thWeylchambers}.
So $\sigma$ is uniquely determined up to the equivalence relation $R$ and $\tilde{\phi}$ is a well-defined map to $\tilde{T}_{Y_0}=U/R$.

We now establish that the morphism $\tilde{\mathbb{M}}_{Y_0} \rightarrow \tilde{T}_{Y_0}$ is an isomorphism.
Objects of $\tilde{\mathbb{M}}_{Y_0}$ have no non-trivial automorphisms by Proposition~\ref{basicseq}. So the stack $\tilde{\mathbb{M}}_{Y_0}$ is (represented by) an analytic space. 
The period mapping $\tilde{\mathbb{M}}_{Y_0} \rightarrow T_{Y_0}$ is \'etale by \cite{L81}, II.2.5. Hence also $\tilde{\mathbb{M}}_{Y_0} \rightarrow \tilde{T}_{Y_0}$ is \'etale.
The map $\tilde{\mathbb{M}}_{Y_0} \rightarrow \tilde{T}_{Y_0}$ is injective on points by the global Torelli theorem for Looijenga pairs, Theorem~\ref{torelliI}.
Indeed, two marked Looijenga pairs $((Y,D),\mu,\{p_i\})$, $((Y',D'),\mu',\{p_i'\})$ are isomorphic iff $\phi_Y=\phi_{Y'}$ and $\mu^{-1}(\Nef(Y))=\mu'^{-1}(\Nef(Y'))$ by Remark~\ref{torellinefcone}. 
Also, the map is surjective on points by the construction of universal families (see Lemma~\ref{inverselemma}) and the fact that connected components of 
\mbox{$C^{++}_D \setminus \bigcup_{\alpha \in \Phi_p} \alpha^{\perp}$} are permuted transitively by the Weyl group $W(\Phi_p)$.
Hence the map $\tilde{\mathbb{M}}_{Y_0} \rightarrow \tilde{T}_{Y_0}$ is an isomorphism as claimed.

The remaining identifications follow by passing to the quotients corresponding to forgetting the marking of $\Pic(Y)$ and/or $D$. Note that applying $\Hom(\cdot,\bG_m)$ to the exact sequence
$$0 \rightarrow D^{\perp} \rightarrow \Pic(Y_0) \rightarrow \bZ^n \rightarrow N' \rightarrow 0$$
we obtain the exact sequence
$$1 \rightarrow K \rightarrow \bG_m^n \rightarrow T_{Y_0} \rightarrow T_{Y_0}' \rightarrow 1.$$
Hence $$\tilde{\mathbb{M}}'_{Y_0}=[\tilde{T}_{Y_0}/\Aut^0(D)]=[\tilde{T}_{Y_0}/\bG_m^n],$$ 
where $K \subset \bG_m^n$ acts trivially and the quotient $H:=\bG_m^n/K$ acts freely. Thus rigidifying $\tilde{\mathbb{M}}'_{Y_0}$ along $K$ yields $[\tilde{T}_{Y_0}/H]=\tilde{T}_{Y_0}'$.
\end{proof}

\section{Generalization of the Tits cone}

In this section we explore to what extent some additional constructions from \cite{L81} extend to the more general context of this paper.

The paper \cite{L81} considers Looijenga pairs $(Y,D)$ such that the following conditions are satisfied:
\begin{assumptions}\label{looijenga_assumptions}
\begin{enumerate}
\item The number $n$ of irreducible components of $D$ is less than or equal to $5$.
\item The intersection matrix $(D_i \cdot D_j)$ is negative semi-definite.
\item There do not exist $(-1)$-curves contained in $D$.
\end{enumerate}
\end{assumptions}
(Note that condition $(3)$ is not essential: Under condition (2), there is always a toric blowdown $(Y,D) \rightarrow (Y',D')$ such that $(Y',D')$ satisfies (3).)

\begin{remark}
Under assumptions~\ref{looijenga_assumptions}, Looijenga gives an explicit description of the set $\Delta$ and shows that it is a basis of the lattice 
$D^{\perp}:=\langle D_1,\ldots,D_n \rangle^{\perp}$. In general however the set $\Delta$ does not give a basis of $D^{\perp}$. In fact $\Delta$ may be infinite, see Example~\ref{Delta_infinite}. 
At the other extreme, there are examples with $D^{\perp} \neq 0$ and $\Delta = \emptyset$, see e.g. \cite{F13}, Examples 4.3 and 4.4.
\end{remark}

Under assumptions~\ref{looijenga_assumptions}, Looijenga defines the Tits cone $I \subset \Pic(Y)_\bR$ as follows.
(Here we use our notation.)
Write $\Delta=\Delta_{Y_e}$ for the set of classes of $(-2)$-curves on $Y_e$. Define the fundamental chamber
$$C=\{ x \in C^+ \ | \ x \cdot \alpha > 0 \mbox{ for all } \alpha \in \Delta \}.$$
The Tits cone $I$ is defined by
$$I = \bigcup_{w \in W} w(\overline{C}).$$
Looijenga proves that the Weyl group $W$ acts properly discontinuously on the interior $\Int(I)$ of $I$ \cite{L81}, Corollary~1.14.
Moreover, the reflection hyperplanes $\alpha^{\perp} \subset \Pic(Y)_{\bR}$, $\alpha \in \Phi$ are dense in $\Pic(Y)_{\bR} \setminus \Int(I) \cup (-\Int(I))$
by \cite{L81}, Theorem~II.1.5. So $$\Int(I) \cup (-\Int(I)) \subset \Pic(Y)_{\bR}$$ is the maximal $W$-equivariant open set on which $W$ acts properly discontinuously.

In the general case recall that we have an inclusion $W \subset \Adm$. The group $\Adm$ is the full monodromy group and the Weyl group $W$ is the normal subgroup given by Picard--Lefschetz transformations. Under assumptions~\ref{looijenga_assumptions} we have $W=\Adm$, see Lemma~\ref{looadm}. In general $W \neq \Adm$, and in fact 
the index of $W \subset \Adm$ may be infinite, see Example~\ref{nonfgcone}. Moreover, there are examples such that $W$ is trivial and $\Adm$ is infinite, see Example~\ref{W_trivial_Adm_infinite}.

However, we show that the fact that $W=\Adm$ acts properly discontinuously on the Tits cone admits a generalization as follows.

\begin{proposition}\label{Titscone}
Assume the conditions~\ref{looijenga_assumptions}. Let $(Y_g,D)$ denote a generic deformation of $(Y,D)$. 
Then the closure of the Tits cone $I \subset \Pic(Y)_{\bR}$ is equal to the closure of $$\overline{\NE}(Y_g)+\langle D_1,\ldots,D_n \rangle_{\bR}.$$
\end{proposition}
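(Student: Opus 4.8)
The plan is to reduce the identity to a comparison of the nef cone of $Y_g$ with the Mori cones of the deformations of $(Y,D)$, all restricted to $D^{\perp}\otimes\bR$, and then to read it off from the chamber structure of the Weyl group action together with Looijenga's analysis of these (finite and affine) cases.

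First I would assemble the inputs. Since $(Y_g,D)$ is generic, $\Delta_{Y_g}=\emptyset$ by Proposition~\ref{generics}, so Lemma~\ref{Moricones}(2) presents $\overline{\NE}(Y_g)$ as the closed convex hull of $C^{+}$, $\tilde{\cM}$ and $\{[D_i]\}$, and dually $\Nef(Y_g)=\overline{C^{++}_D}$ by Lemma~\ref{Nefcones}. Under Assumptions~\ref{looijenga_assumptions} we have $\Adm_Y=W$ (Lemma~\ref{looadm}), so that $W$ is the full monodromy group; by the global Torelli theorem (Theorem~\ref{torelliI}) the Weyl chambers $w(\overline C)$, $w\in W$, are the nef cones of the Looijenga pairs deformation-equivalent to $(Y,D)$ (appropriately marked, with period point that of $Y_e$), each enlarged by the span $V:=\langle D_1,\dots,D_n\rangle_{\bR}$, so that $I=\bigcup_{w\in W}w(\overline C)$. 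The key structural feature here is that $\overline C$ contains $V$, because every class $[D_i]$ is orthogonal to every root (roots lie in $D^{\perp}$).

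Next I would dualize. The dual cone of $\overline{\,\overline{\NE}(Y_g)+V\,}$ is $\Nef(Y_g)\cap V^{\perp}=\overline{C^{++}_D}\cap(D^{\perp}\otimes\bR)$, the cone of nef classes of $Y_g$ orthogonal to $D$. Dualizing $\overline I=\overline{\bigcup_{w}w(\overline C)}$ turns the union into an intersection of duals and, since $W$ acts by isometries, gives $\overline I^{\vee}=\bigcap_{w\in W}w(\overline C^{\vee})$; because $V\subseteq\overline C$ we have $\overline C^{\vee}\subseteq V^{\perp}=D^{\perp}\otimes\bR$, so this intersection already lies in $D^{\perp}\otimes\bR$. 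Using $\Phi=W\cdot\Delta$ (Proposition~\ref{simplerootsgenerate}) and the $W$-invariance of $C^{+}$, $\tilde{\cM}$ and the $[D_i]$ (Lemma~\ref{C++lemma}, with $W$ fixing $D^{\perp}$ pointwise), the proposition is thus equivalent to an identity of cones inside $D^{\perp}\otimes\bR$, saying that intersecting all the $W$-translates of $\overline C^{\vee}$ produces exactly the restriction of $\overline{C^{++}_D}$ to $D^{\perp}\otimes\bR$. One inclusion is soft: $C^{+}\subset\overline{\NE}(Y')$ for every Looijenga pair $Y'$ by Riemann--Roch, and the effective radical direction of the negative semi-definite form on $V$ behaves compatibly with both sides.

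The reverse inclusion is the crux, and this is where Assumptions~\ref{looijenga_assumptions}(1) and (2) are essential. Negative semi-definiteness of $(D_i\cdot D_j)$ forces $\Phi$ to be a finite or affine root system, so that the arrangement $\{\alpha^{\perp}\}_{\alpha\in\Phi}$ is locally finite on $C^{++}$ and $W$ acts properly discontinuously on $\Int I$; rather than reprove this I would invoke Looijenga's results (\cite{L81}, Corollary~1.14 and Theorem~II.1.5), which apply verbatim, and use his explicit classification of $\Delta$, $\Phi$ and of the complement of $\Int I$, where the reflection hyperplanes are dense. It then remains to translate that information into the statement that $\overline I$ reaches exactly $\overline{\,\overline{\NE}(Y_g)+V\,}$ and no further: the finitely many configurations allowed by \ref{looijenga_assumptions}(1) make a case-by-case check feasible, ruling out that any class outside $\overline{\NE}(Y_g)+V$ lies in all the Weyl translates. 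I expect this last step — matching the closure of a union of (possibly infinitely many) chambers against the fixed cone on the right, and in particular controlling the behaviour near the boundary — to be the main obstacle; the positive-square and effectivity contributions are routine given Lemma~\ref{Moricones} and Riemann--Roch.
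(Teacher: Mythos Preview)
Your approach via dualization is workable in spirit but considerably more circuitous than the paper's, and it contains a slip and a real gap.

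First the slip: you write that ``$W$ fix[es] $D^{\perp}$ pointwise''. This is false --- the reflections $s_{\alpha}$ for $\alpha\in\Phi\subset D^{\perp}$ act nontrivially on $D^{\perp}$ (e.g.\ $s_{\alpha}(\alpha)=-\alpha$). What is true, and what you need for the $W$-invariance of the $[D_i]$, is that $W$ fixes $\langle D_1,\dots,D_n\rangle$ pointwise, because roots lie in $D^{\perp}$. Related to this, your identification of $w(\overline C)$ with a nef cone ``enlarged by $V$'' needs justification: $C$ is cut out inside $C^{+}$, not inside $C^{++}_D$, so the precise relationship between $\overline C$ and $\Nef(Y_e)+V$ is not immediate.

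More seriously, your plan for the hard inclusion is too vague to count as a proof. You intend to invoke the density of reflection hyperplanes in the complement of $\pm\Int(I)$ (\cite{L81}, Theorem~II.1.5) and then ``translate'' this into the desired equality. But density of hyperplanes tells you where $\Int(I)$ \emph{stops}; it does not by itself identify $\overline I$ with $\overline{\NE}(Y_g)+V$. The paper bypasses the duality detour entirely and compares the two cones directly, splitting into the cases $D^2<0$ (finite type) and $D^2=0$ (affine type). In the first case it quotes Looijenga's explicit description of the Tits cone (\cite{L81}, Proposition~I.3.9 and \S I.3.8) as the convex hull of $\langle D_1,\dots,D_n\rangle_{\bR}$ together with the $W$-orbit of a single interior $(-1)$-curve meeting each $D_i$; since $W$ acts transitively on such $(-1)$-curves (\cite{L81}, Theorem~I.4.6), this matches the Mori cone description of $\overline{\NE}(Y_g)$ from Lemma~\ref{Moricones} on the nose. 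In the affine case $D^2=0$ one has simply $I=\{x\cdot D>0\}\cup\langle D_1,\dots,D_n\rangle_{\bR}$, and both inclusions are then one-line checks (if $x\cdot D>0$ then $x+ND\in C^{+}\subset\overline{\NE}(Y_g)$ for $N\gg 0$; conversely $D$ is effective with $D\cdot D_i=0$). The Looijenga inputs you need are I.3.9 and I.4.6, not II.1.5.
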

\begin{proof}
Write $A=\overline{\NE}(Y_g)+\langle D_1,\ldots,D_n \rangle_\bR$, a convex cone in $\Pic(Y)_{\bR}$.
We must show that $\overline{A}=\overline{I}$.

The Weyl group $W$ acts transitively on the set of $(-1)$-curves on $Y_g$ meeting $D_i$, for each $i=1,\ldots,n$, by \cite{L81} Theorem I.4.6.
Also $C^+ \subset I$ by \cite{L81}, Lemma~I.3.7. 

If $D^2<0$ then $I$ is the convex hull of the union of $\langle D_1,\ldots,D_n \rangle_{\bR}$ and the set of $(-1)$-curves on $Y_g$ by \cite{L81} Proposition~I.3.9, the description of the extremal facets of $I$ in \S I.3.8, and Theorem I.4.6. Now by the description of $\overline{\NE}(Y_g)$ given by Lemma~\ref{Moricones} we deduce that $\overline{I}=\overline{A}$ if $D^2<0$. 

If $D^2=0$ then 
$$I=\{x \in \Pic(Y)_{\bR} \ | \ x \cdot D > 0 \} \cup \langle D_1,\ldots,D_n\rangle_{\bR}$$ 
by \cite{L81} Proposition~I.3.9. It is easy to see that $\overline{I}=\overline{A}$ in this case. Indeed, if $x \cdot D > 0$ then $(x+ND) \in C^+$ for $N \gg 0$, thus $x \in A$. So $I \subset A$. Conversely, $A \subset \overline{I}$ because $D$ is effective and $D \cdot D_i = 0$ for each component $D_i$ of $D$ (note that $D$ is either irreducible with $D^2=0$ or a cycle of $(-2)$-curves).
\end{proof}

\begin{proposition}\label{discontinuous_action_Mori_cone}
Let $Y$ be a smooth projective surface. Let $\Gamma \subset \Aut(\Pic(Y))$ be a subgroup preserving the semigroup of effective classes. Then $\Gamma$ acts properly discontinuously on the interior of the cone $\overline{\NE}(Y)$.
\end{proposition}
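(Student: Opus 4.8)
The plan is to combine two facts: that $\Gamma$ preserves the Mori cone, and that $\Gamma$ is a subgroup of the \emph{discrete} group $\Aut(\Pic(Y))\subset GL(\Pic(Y)_{\bR})$. Since $\Gamma$ preserves the monoid of effective classes, it sends the convex cone generated by that monoid to itself, hence preserves its closure $\overline{\NE}(Y)\subset\Pic(Y)_{\bR}$, and therefore also preserves the dual cone $\overline{\Nef}(Y)$. Fix an ample class $A_0$. By Kleiman's criterion $A_0\cdot w>0$ for every $w\in\overline{\NE}(Y)\setminus\{0\}$, so $\overline{\NE}(Y)$ is a pointed closed cone; it has nonempty interior (it contains the ample cone), so its dual $\overline{\Nef}(Y)$ is likewise a pointed closed cone with nonempty interior. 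Write $\rho$ for the Picard number. The key estimate I would record is: for any compact $K\subset\Int\overline{\NE}(Y)$ there is $\eps>0$ with
\[
x\cdot h\ \ge\ \eps\,\|h\|\qquad\text{for all }x\in K,\ h\in\overline{\Nef}(Y).
\]
This holds because $\Int\overline{\NE}(Y)$ is exactly the set of classes pairing strictly positively with $\overline{\Nef}(Y)\setminus\{0\}$, so $(x,h)\mapsto x\cdot h$ is a strictly positive continuous function on the compact set $K\times\bigl(\overline{\Nef}(Y)\cap S^{\rho-1}\bigr)$, where the sphere section is compact and nonempty since $\overline{\Nef}(Y)$ is a proper cone with nonempty interior.

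With this in hand the argument is short. Choose ample classes $h^{(1)},\dots,h^{(\rho)}$ forming a basis of $\Pic(Y)_{\bR}$ (possible as the ample cone is open and nonempty). Given a compact $K\subset\Int\overline{\NE}(Y)$ and $\gamma\in\Gamma$ with $\gamma K\cap K\neq\emptyset$, pick $x\in K$ with $\gamma x\in K$. Then for each $j$ we have $\gamma h^{(j)}\in\overline{\Nef}(Y)$, and, using that $\gamma$ is a lattice isometry together with the estimate applied to the point $\gamma x\in K$,
\[
\eps\,\|\gamma h^{(j)}\|\ \le\ (\gamma x)\cdot(\gamma h^{(j)})\ =\ x\cdot h^{(j)}\ \le\ \max_{y\in K}\,y\cdot h^{(j)}\ <\ \infty .
\]
Hence $\|\gamma h^{(j)}\|$ is bounded by a constant depending only on $K$ and $h^{(j)}$, uniformly over the set $\{\gamma\in\Gamma\mid\gamma K\cap K\neq\emptyset\}$. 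Since the $h^{(j)}$ are a basis, the map $\gamma\mapsto(\gamma h^{(1)},\dots,\gamma h^{(\rho)})$ is a linear embedding of $\operatorname{End}(\Pic(Y)_{\bR})$, so this set lies in a bounded subset of $GL(\Pic(Y)_{\bR})$; as $\Gamma\subset\Aut(\Pic(Y))$ is discrete, the set is finite. That is precisely proper discontinuity of the action on $\Int\overline{\NE}(Y)$.

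\textbf{Main obstacle.} The only real subtlety is to get the cone positivity right, namely that the pairing must be taken \emph{between} $\Int\overline{\NE}(Y)$ and $\overline{\Nef}(Y)$, not within $\overline{\NE}(Y)$: two classes of $\overline{\NE}(Y)$ can pair negatively (a class and a large multiple of a $(-1)$-curve), so one cannot imitate the usual hyperbolic-reflection-group argument based on the light cone $\{x^2>0\}$, but a class in the interior of the Mori cone always pairs positively with a nonzero nef class. Everything else is routine convex geometry plus discreteness of $\Aut(\Pic(Y))$. (If one wishes to allow torsion in $\Pic(Y)$, work throughout with $N^1(Y)=\Pic(Y)/(\mathrm{torsion})$ and note that the kernel of $\Aut(\Pic(Y))\to\Aut(N^1(Y))$ is finite.)
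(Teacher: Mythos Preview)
Your proof is correct, and it takes a genuinely different route from the paper's.

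The paper reduces to the classical fact that any subgroup of the lattice orthogonal group acts properly discontinuously on the positive cone $C^+=\{x:x^2>0,\ x\cdot H>0\}$. To bridge from $\Int\overline{\NE}(Y)$ to $C^+$ it invokes the \emph{Zariski decomposition}: every pseudoeffective class $D$ on a surface decomposes uniquely as $D=P+N$ with $P$ nef, $N$ effective, $P\cdot N_i=0$, and $(N_i\cdot N_j)$ negative definite; moreover $D$ lies in the interior of $\overline{\NE}(Y)$ iff $P^2>0$. Since $\Gamma$ preserves effective and hence nef classes, and preserves the pairing, $\gamma D=\gamma P+\gamma N$ is again the Zariski decomposition. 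Using continuity of $D\mapsto P$ on the big cone (quoted from \cite{BKS04}), a convergent sequence $\gamma_n x_n\to y$ in $\Int\overline{\NE}(Y)$ forces $\gamma_n P_n\to P'$ in $C^+$, and proper discontinuity on $C^+$ finishes the argument.

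Your argument bypasses Zariski decomposition entirely, using only convex duality: points of $\Int\overline{\NE}(Y)$ pair uniformly positively with $\overline{\Nef}(Y)$ on compacta, $\Gamma$ preserves both cones and the pairing, so the images of a fixed ample basis under any $\gamma$ moving $K$ into $K$ are bounded, and discreteness of $\Aut(\Pic(Y))\subset GL_\rho(\bZ)$ gives finiteness. This is more elementary and self-contained --- it needs no outside references and in fact proves the general convex statement that a discrete subgroup of a lattice orthogonal group preserving a full-dimensional closed salient cone acts properly discontinuously on its interior. The paper's argument, by contrast, exhibits a geometrically meaningful $\Gamma$-equivariant continuous map $\Int\overline{\NE}(Y)\to C^+$, $D\mapsto P$, which is of independent interest but not needed for the bare statement.
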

\begin{proof}
First note that any subgroup $\Gamma$ of $\Aut(\Pic(Y))$ acts properly discontinuously on the positive cone $C^+$.

Now assume as in the statement that $\Gamma$ preserves the semigroup of effective classes. We will use the Zariski decomposition of effective divisors on the surface $Y$ to show that $\Gamma$ acts properly discontinuously on the interior of the effective cone.
Let $D$ be a pseudoeffective $\bR$-divisor on the surface $Y$ (that is, $D \in \overline{\NE}(Y)$). Then there is a unique decomposition
$$D=P+N$$
where $P$ and $N$ are $\bR$-divisors, $P$ is nef, $N$ is effective, and, writing $N=\sum a_iN_i$ where $N_i$ is irreducible and $a_i \in \bR_{>0}$ for each $i$, we have $P \cdot N_i = 0$ for each $i$ and the matrix $(N_i \cdot N_j)$ is negative definite. See \cite{KMM87}, Theorem 7.3.1.
Moreover $D$ lies in the interior of the effective cone iff $P^2>0$.
(Indeed $C^+ \subset \overline{\NE}(Y)$ so $P^2>0$ implies $D$ lies in the interior of $\overline{\NE}(Y)$. Conversely if $P^2=0$ and $P \neq 0$ then $D \in P^{\perp}$ and $P$ is nef so $D$ does not lie in the interior of $\overline{\NE}(Y)$. Finally if $P=0$ then we can find a nef divisor $B$ such that $B \cdot D =0$ --- take an ample divisor $A$ and write $B=A+\sum \lambda_i N_i$ such that $B \cdot N_i=0$ for each $i$, then $\lambda_i > 0$ for each $i$ and $B$ is nef, $B \cdot D=0$. Thus $D$ does not lie in the interior of $\overline{\NE}(Y)$.)
Note also that if $D$ lies in the interior of the effective cone then the Zariski decomposition $D=P+N$ is characterized by the following properties: $P$ is nef, $P \neq 0$, $N$ is effective, and $P \cdot N = 0$. (Indeed, writing $N=\sum_{i=1}^k a_iN_i$ as above, $P \cdot N = 0$ and $P$ nef implies $P \cdot N_i =0$ for each $i$. Also $P^2>0$ because $D$ lies in the interior of the effective cone, so the subspace $\langle N_1,\ldots,N_k \rangle_{\bR} \subset P^{\perp}$ is negative definite.
It remains to show that the $N_i$ are linearly independent. Otherwise, we have a nontrivial expression $\sum \alpha_iN_i = \sum \beta_iN_i$ where $\alpha_i, \beta_i \in \bR_{\ge 0}$ and $\alpha_i\beta_i=0$ for each $i$. But then 
$$0>(\sum \alpha_i N_i)^2=(\sum \alpha_i N_i) \cdot (\sum \beta_i N_i) \ge 0,$$ 
a contradiction.) 

Let $B \subset \Pic(Y)_{\bR}$ denote the interior of the effective cone. We need to show that $\Gamma$ acts properly discontinuously on $B$. Equivalently, the map
$$\Gamma \times B \rightarrow B \times B, \quad (\gamma,x) \mapsto (x, \gamma x)$$
is proper (that is, the inverse image of a compact set is compact). Equivalently, if $(\gamma_n,x_n)$ is a sequence in $\Gamma \times B$ such that $x_n \rightarrow x$ and $\gamma_nx_n \rightarrow y$ as $n \rightarrow \infty$ for some $x,y \in B$, then $\gamma_n=\gamma$, for some $\gamma \in \Gamma$, 
for infinitely many $n$.
Let $x_n=P_n+N_n$, $x=P+N$, and $y=P'+N'$ be the Zariski decompositions of $x_n$, $x$, and $y$. Then $P_n \rightarrow P$ and $N_n \rightarrow N$ as $n \rightarrow \infty$ by continuity of the Zariski decomposition on the interior of the effective cone \cite{BKS04}, Proposition~1.16. Also, since by assumption $\Gamma$ preserves the semigroup of effective classes, $\gamma_nx_n=\gamma_nP_n+\gamma_nN_n$ is the Zariski decomposition of $\gamma_nx_n$. Thus $\gamma_nP_n \rightarrow P'$ and $\gamma_nN_n \rightarrow N'$ as $n \rightarrow \infty$. Now $P_n \rightarrow P$ and $\gamma_nP_n \rightarrow P'$ implies $\gamma_n=\gamma$, some $\gamma \in \Gamma$, for infinitely many $n$ because $\Gamma$ acts properly discontinuously on $C^+$.
\end{proof}

\begin{lemma} \label{monodromy_preserves_effective}
Let $(Y,D)$ be a Looijenga pair. Let $(Y_g,D)$ be a generic deformation of $(Y,D)$. Then the monodromy group $\Adm_{Y}$ preserves the semigroup of effective classes on $(Y_g,D)$.
\end{lemma}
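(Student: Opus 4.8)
The plan is to show that the semigroup $\operatorname{Eff}(Y_g)$ of effective classes on the generic pair $(Y_g,D)$ is exactly the set of lattice points of the closed Mori cone $\overline{\NE}(Y_g)$, and that this set is visibly preserved by $\Adm_{Y_g}$. First a reduction: by Lemma~\ref{C++lemma} the data defining $\Adm$ (the intersection form, the classes $[D_i]$, and the cone $C^{++}$) are invariant under parallel transport, so parallel transport along a path joining $(Y,D)$ to a generic deformation $(Y_g,D)$ (which exists by Proposition~\ref{generics}) carries $\Adm_Y$ to $\Adm_{Y_g}$. Hence it suffices to prove that $\Adm_{Y_g}$ preserves $\operatorname{Eff}(Y_g)\subset\Pic(Y_g)$.

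\emph{Step 1: $\Adm_{Y_g}$ preserves $\overline{\NE}(Y_g)$.} Any $\theta\in\Adm_{Y_g}$ preserves $C^{++}$ and fixes each $[D_i]$ by definition, hence preserves the subcone $C^{++}_D$ and, being continuous, its closure $\overline{C^{++}_D}$, which equals $\Nef(Y_g)$ by Lemma~\ref{Nefcones} since $\Delta_{Y_g}=\emptyset$ for a generic pair. As $\theta$ also preserves the intersection pairing, it preserves the dual cone $\overline{\NE}(Y_g)=\Nef(Y_g)^{\vee}$.

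\emph{Step 2: every nonzero lattice point of $\overline{\NE}(Y_g)$ is effective.} Let $0\neq x\in\overline{\NE}(Y_g)\cap\Pic(Y_g)$; since $x$ is pseudoeffective it has a Zariski decomposition $x=P+N$ with $P$ nef and $N$ effective, exactly as used in the proof of Proposition~\ref{discontinuous_action_Mori_cone}. If $P=0$ then $x=N$ is effective. Otherwise $P$ is a nonzero nef class on a surface, so $P^{2}\geq 0$ and $P\cdot H>0$ for every ample $H$; since $-K_{Y_g}=D$ is effective and nonzero, $(K_{Y_g}-P)\cdot H=-D\cdot H-P\cdot H<0$, whence $h^{2}(P)=h^{0}(K_{Y_g}-P)=0$ by Serre duality. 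Riemann--Roch then gives $\chi(P)=1+\frac12(P^{2}+P\cdot D)\geq 1$ (using that $Y_g$ is rational so $\chi(\O_{Y_g})=1$, that $P$ is nef, and that $D$ is effective), so $h^{0}(P)\geq\chi(P)\geq 1$ and $P$ is effective. Therefore $x=P+N$ is effective.

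Combining the two steps, $\operatorname{Eff}(Y_g)=\overline{\NE}(Y_g)\cap\Pic(Y_g)$ (the inclusion $\subseteq$ being immediate), and this lattice cone is preserved by $\Adm_{Y_g}$ by Step 1. The whole argument is elementary; the only point needing any care is Step 2, and its proof is already present in that of Proposition~\ref{discontinuous_action_Mori_cone}, so I do not expect a genuine obstacle.
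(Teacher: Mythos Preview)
Your Step~1 and the preliminary reduction are fine, and indeed match what the paper uses: $\Adm_Y$ fixes the $[D_i]$ and preserves $\overline{C^{++}_D}=\Nef(Y_g)$, hence its dual $\overline{\NE}(Y_g)$.

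The gap is in Step~2. The Zariski decomposition $x=P+N$ of an integral class $x$ produces $P$ and $N$ only as $\mathbb{Q}$- (or $\mathbb{R}$-) divisors, not as integral divisors. So you cannot form the line bundle $\mathcal{O}(P)$, and the quantities $h^0(P)$, $h^2(P)$, Serre duality, and the integral Riemann--Roch formula you invoke for $P$ are not defined. The case $P=0$ is likewise not as immediate as you claim: ``$x=N$ is effective'' only says that the integral class $x$ is represented by an effective $\mathbb{R}$-divisor $\sum a_iN_i$, which does not automatically make $x$ the class of an effective \emph{integral} divisor. Nor can one rescue the argument by applying Riemann--Roch directly to $x$: already for $x=2E$ with $E$ an interior $(-1)$-curve one has $x^2+D\cdot x=-2$, so $\chi(x)=0$ and Riemann--Roch gives no lower bound on $h^0(x)$.

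The paper avoids this by not trying to prove $\operatorname{Eff}(Y_g)=\overline{\NE}(Y_g)\cap\Pic(Y_g)$ at all. Instead it works with generators of the semigroup: the effective semigroup is generated by classes of irreducible curves, and by Lemma~\ref{Moricones}(1) every irreducible curve on the generic $Y_g$ is either a boundary component $D_i$, a $(-1)$-curve, or satisfies $C^2\ge 0$. Since $\mu\in\Adm_Y$ fixes each $[D_i]$, preserves the ample cone, the intersection form, and $K_Y=-\sum[D_i]$, one checks in each of these three cases (the last two via Riemann--Roch applied to the \emph{integral} class $\mu([C])$) that $\mu([C])$ is again effective. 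This is both shorter and sidesteps the integrality issue entirely.
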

\begin{proof}
By Lemma~\ref{Moricones}(1), if $C \subset Y_g$ is an irreducible curve, then either $C \subset D$, $C^2 \ge 0$,  or $C$ is a $(-1)$-curve.
The group $\Adm_Y$ preserves the boundary classes $[D_i]$  and the ample cone of $Y_g$ by Lemma~\ref{Nefcones}.
It follows from Riemann--Roch that $\mu(C)$ is effective for $\mu \in \Adm_Y$ and $C$ either an irreducible curve such that $C^2 \ge 0$ or a $(-1)$-curve.
So $\Adm_Y$ preserves the semigroup of effective classes.
\end{proof}

\begin{corollary}\label{properly_discontinuous_on_generalized_Tits_cone}
Let $(Y,D)$ be a Looijenga pair. Let $(Y_g,D)$ be a generic deformation of $(Y,D)$.
Then $\Adm_{Y}$ acts properly discontinuously on the interior of $\overline{\NE}(Y_g)+\langle D_1,\ldots,D_n\rangle_{\bR}$.
\end{corollary}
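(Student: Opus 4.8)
The plan is to deduce this from Proposition~\ref{discontinuous_action_Mori_cone} and Lemma~\ref{monodromy_preserves_effective}, the only new work being the extra linear summand. Write $\Gamma:=\Adm_Y$, $V:=\langle [D_1],\dots,[D_n]\rangle_{\bR}\subset\Pic(Y_g)_{\bR}$ and $A:=\overline{\NE}(Y_g)$. By Lemma~\ref{monodromy_preserves_effective}, $\Gamma$ preserves the semigroup of effective classes on $Y_g$, hence preserves $A$; by definition $\Gamma$ fixes each $[D_i]$, hence fixes $V$ pointwise, so $\gamma(a+v)=\gamma(a)+v$ for $a\in A$, $v\in V$, $\gamma\in\Gamma$. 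Proposition~\ref{discontinuous_action_Mori_cone} gives that $\Gamma$ acts properly discontinuously on $\Int A$. Since $A$ is full-dimensional, a short convexity argument (push an interior point of $A+V$ slightly along the segment towards a fixed interior point of $A$) shows $\Int(A+V)=\Int A+V$, a $\Gamma$-invariant open cone; so the task is to promote proper discontinuity from $\Int A$ to its ``$V$-saturation'' $\Int A+V$.

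The first thing to notice is that this genuinely requires new input, since $A+V$ can be much larger than $A$: dualizing, $\overline{A+V}=(\Nef(Y_g)\cap D^{\perp})^{\vee}$, and if $\Nef(Y_g)\cap D^{\perp}=0$ (e.g. whenever $D^2>0$) then $\Int(A+V)=\Pic(Y_g)_{\bR}$. So I would split into cases. \textbf{Positive case} ($D^{\perp}$ negative definite, equivalently $\Nef(Y_g)\cap D^{\perp}=0$): here I claim $\Gamma$ is finite. Indeed the intersection form on $\Pic(Y_g)$ is unimodular and $D^{\perp}$ negative definite forces $D^{\perp}\cap V=0$, so $\Pic(Y_g)_{\bR}=D^{\perp}\oplus V$; as $\Gamma$ consists of isometries fixing $V$ pointwise and preserving $D^{\perp}$, it embeds into the compact orthogonal group of $D^{\perp}_{\bR}$, and being a subgroup of $\GL(D^{\perp}_{\ZZ})$ it is finite. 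A finite group acts properly discontinuously on any space, so this case is immediate.

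\textbf{Remaining case} ($D^{\perp}$ not negative definite, so $\Nef(Y_g)\cap D^{\perp}\neq 0$): let $q\colon\Pic(Y_g)_{\bR}\to W:=\Pic(Y_g)_{\bR}/V$ and $\Omega^{\circ}:=q(\Int A)$, an open convex cone preserved by the induced $\Gamma$-action. Then $q$ restricts to a $\Gamma$-equivariant surjection $\Int A+V=q^{-1}(\Omega^{\circ})\to\Omega^{\circ}$, and proper discontinuity transfers from the target to the source of any equivariant surjection (for compact $K$, $\{\gamma\mid\gamma K\cap K\neq\emptyset\}\subseteq\{\gamma\mid\gamma\, q(K)\cap q(K)\neq\emptyset\}$), so it suffices to prove $\Gamma$ acts properly discontinuously on $\Omega^{\circ}\subset W$. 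Two facts are available: (a) $\Gamma$ acts faithfully on $W$ --- if $\gamma$ acted trivially on $W$ then $(\gamma-\mathrm{id})^2=0$ (as $\gamma$ also fixes $V$ pointwise), so for any $a$ with $(\gamma-\mathrm{id})a\neq 0$ the line $a+\bR(\gamma-\mathrm{id})a$ would lie in the strongly convex cone $A$, a contradiction; (b) the Zariski-decomposition mechanism of Proposition~\ref{discontinuous_action_Mori_cone} can be re-run: identifying $W$ with $D^{\perp}_{\bR}$ when $V$ is nondegenerate, $\Gamma$ becomes a group of isometries of an indefinite lattice acting properly discontinuously on its positive cone, and $\Omega^{\circ}$ admits a $\Gamma$-equivariant, continuous map to that positive cone via ``positive part of the Zariski decomposition,'' from which proper discontinuity follows exactly as in that proposition.

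The hard part will be this last step in the remaining case: one cannot simply invoke Proposition~\ref{discontinuous_action_Mori_cone}, because $\Int A+V$ is a strictly larger cone, and one must instead carry the Zariski-decomposition argument through the quotient $\Pic(Y_g)_{\bR}/V$, checking that the decomposition descends $\Gamma$-equivariantly and continuously. A further subtlety to handle is the degenerate sub-case where $D^{\perp}$ is negative semidefinite with nonzero radical (e.g. a cycle of $(-2)$-curves, cf.\ Example~\ref{nonfgcone}): there the positive cone of $W$ collapses, and one must instead use that $\Gamma$ still acts properly discontinuously on a codimension-one ``parabolic'' slice, as in the standard analysis of parabolic reflection groups.
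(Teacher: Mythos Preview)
Your reduction to the quotient $W=\Pic(Y_g)_{\bR}/V$ is valid, and the positive case is handled correctly, but the whole program is vastly more elaborate than necessary, and the ``hard part'' you flag --- proving proper discontinuity on the quotient cone $\Omega^{\circ}$, including the parabolic sub-case --- is never actually carried out. You have misdiagnosed the difficulty: the sentence ``this genuinely requires new input, since $A+V$ can be much larger than $A$'' is where you go astray.

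The paper's proof is a two-line observation that sidesteps all of this. The point is that although $\Int A+V$ may be much larger than $\Int A$, any \emph{two} points $x,y\in\Int A+V$ lie in a \emph{single} translate $T=\Int A+z$ for some $z\in V$. Indeed, write $x=a_x+\sum c_i[D_i]$ and $y=a_y+\sum d_i[D_i]$ with $a_x,a_y\in\Int A$; taking $z=\sum e_i[D_i]$ with $e_i\le\min(c_i,d_i)$ and using that each $[D_i]\in A$ (they are effective), one gets $x-z,\,y-z\in\Int A$. Since $\Gamma$ fixes $z$, the translate $T$ is $\Gamma$-invariant and $\Gamma$ acts properly discontinuously on it (transport the action from $\Int A$ via $a\mapsto a+z$). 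Now pick neighbourhoods $U\ni x$, $V'\ni y$ inside $T$ and read off finiteness of $\{\gamma:\gamma U\cap V'\neq\emptyset\}$. No case split, no quotient, no Zariski decomposition on $W$, no parabolic analysis.

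So the gap in your proposal is not that your route is wrong in principle, but that you never complete the remaining case, and the missing idea --- that the $\Gamma$-fixed subspace $V$ lies in the \emph{non-negative} cone generated by the effective classes $[D_i]$, so one can always shift two points into a common copy of $\Int A$ --- makes the entire quotient machinery unnecessary.
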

\begin{proof}
The group $\Adm_{Y}$ acts properly discontinuously on the interior of $\overline{\NE}(Y_g)$ by Lemma~\ref{monodromy_preserves_effective} and Proposition~\ref{discontinuous_action_Mori_cone}. 
Since $\Adm_{Y}$ acts trivially on the subspace $\langle D_1,\ldots,D_n\rangle_{\bR}$, it follows that $\Adm_{Y}$ acts properly discontinuously on the interior of $\overline{\NE}(Y_g)+\langle D_1,\ldots,D_n\rangle_{\bR}$. 
(Indeed any two points $x,y$ in the interior of $\overline{\NE}(Y_g)+\langle D_1,\ldots,D_n \rangle_{\bR}$ are contained in a translate $T$ of the interior of $\overline{\NE}(Y_g)$ by some element $z=\sum a_i D_i$, $a_i \in \bR$. 
Thus there exist open neighbourhoods $x \in U \subset T$ and $y \in V \subset T$ such that the set $\{ g \in \Adm_{Y} \ | \ gU \cap V \neq \emptyset \}$ is finite
because $\Adm_{Y}$ acts properly discontinuously on $T$.)
\end{proof}

\end{document}